\newtheorem{thm}{Theorem}[section]
\newtheorem{cor}[thm]{Corollary}
\newtheorem{lem}[thm]{Lemma}
\newtheorem{prop}[thm]{Proposition}
\theoremstyle{remark}
\theoremstyle{definition}
\newtheorem{defn}{Definition}[section]
\newtheorem{rem}{Remark}[section]
\numberwithin{equation}{section}
\numberwithin{figure}{section}
\font\nt=cmr7
\def\note#1
\newcommand{\di}{\partial}
\newcommand{\ra}{\rightarrow}
\def\ssk{\smallskip}
\def\msk{\medskip}
\def\bsk{\bigskip}
\def\nin{\noindent}
\newcommand{\diam}{\operatorname{diam}}
\newcommand{\dist}{\operatorname{dist}}
\renewcommand{\mod}{\operatorname{mod}}
\newcommand{\id}{\operatorname{id}}
\newcommand{\Per}{\operatorname{Per}}
\newcommand{\Orb}{\operatorname{Orb}}
\newcommand{\tip}{\operatorname{tip}}
\newcommand{\eps}{{\varepsilon}}
\newcommand{\de}{{\delta}}
\newcommand{\La}{{\Lambda}}
\newcommand{\si}{{\sigma}}
\newcommand{\AAA}{{\mathcal A}}
\newcommand{\II}{{\mathcal I}}
\newcommand{\OO}{{\mathcal O}}
\newcommand{\N}{{\mathbb N}}
\newcommand{\R}{{\mathbb R}}
\def\BPhi{{\boldsymbol{\BPhi}}}
\def\B0{{\mathbf{0}}}
\newcommand{\Jac}{\operatorname{Jac}}
\newcommand{\Dom}{\operatorname{Dom}}
\def\Empty{}
\newcommand\oplabel[1]{
  \def\OpArg{#1} \ifx \OpArg\Empty {} \else
  	\label{#1}
  \fi}
\newcommand{\comm}[1]{}
\newcommand{\comment}[1]{}
  \titlespacing*{\section} {0pt}{1em}{2.3ex plus .2ex}
\titlespacing*{\subsection}{0pt}{1em}{2.3ex plus .2ex}
\newcommand\numberthis{\addtocounter{equation}{1}\tag{\theequation}}
\begin{document}

\bigskip\bigskip

\title[H\'enon renormalization]{Renormalization of $ C^r $ H\'enon map $ \colon $ Two dimensional embedded map in three dimension }

\address {Hong-ik University }
\date{December 29, 2014}
\author{Young Woo Nam}
\thanks{College of Science and Technology, Hongik University at Sejong, Korea. \newline 
Email $ \colon $ namyoungwoo\,@\,hongik.ac.kr}

\begin{abstract}
We study renormalization of highly dissipative analytic three dimensional H\'enon maps
$$ F(x,y,z) = (f(x) - \eps(x,y,z),\ x,\ \de(x,y,z)) $$
where $ \eps(x,y,z) $ is a sufficiently small perturbation of $ \eps_{2d}(x,y) $. Under certain conditions, $ C^r $ {\em single invariant surfaces} each of which is tangent to the invariant plane field over the critical Cantor set exist for $ 2 \leq r < \infty $. The $ C^r $ conjugation from an invariant surface to the $ xy- $plane defines renormalization two dimensional $ C^r $ H\'enon-like map. It also defines two dimensional {\em embedded} $ C^r ${\em H\'enon-like maps} in {three dimension}. In this class, universality theorem is re-constructed by conjugation. Geometric properties on the critical Cantor set in invariant surfaces are the same as those of two dimensional maps --- {non existence} of {the continuous line field}, and {unbounded geometry}. The set of embedded two dimensional H\'enon-like maps is open and dense subset of the parameter space of average Jacobian, $ b_{F_{2d}} $ 
for any given smoothness, $ 2 \leq r < \infty $. 
\end{abstract}

\maketitle


\thispagestyle{empty}

\setcounter{tocdepth}{1}

\tableofcontents


\newpage
\renewcommand{\labelenumi} {\rm {(}\arabic{enumi}{)}}

\section{Introduction}
Renormalization is for the one dimensional maps for a few recent decades by many authors in various papers. Some of main results and historical facts of renormalization theory of one dimensional maps are in \cite{dFdMP} and references therein. Renormalization of higher dimensional maps was started by Coullet, Eckmann and Koch in \cite{CEK}. 
Period doubling renormalization of analytic H\'enon map with strong dissipativeness was introduced in \cite{CLM}. The average Jacobian $ b_F $ of infinitely renormalizable H\'enon-like map, $ F $, is defined 
$$ b_F = \exp \int_{\OO_F} \log \Jac F \,d\mu $$
where $ \OO_F $ is the critical Cantor set and $ \mu $ is the ergodic measure on $ \OO_F $. 
Carvalho, Lyubich and Martens in \cite{CLM} proved Universality Theorem and showed geometric properties of the critical Cantor set which are different from those of one dimensional maps. For instance, generic unbounded geometry of the critical Cantor set in the parameter space of the average Jacobian was shown and this geometric property is generalized for the full Lebesgue measure set in \cite{HLM}. 
\ssk \\
\nin H\'enon renormalization is generalized for three dimensional analytic H\'enon-like map in \cite{Nam1}. For instance, the universal asymptotic expression of $ R^nF $ is
$$ \Jac R^nF(x,y,z) = b_F^{2^n}a(x)(1 +O(\rho^n)) $$ 
where $ a(x) $ is analytic and positive for $ 0 < \rho < 1 $. However, the universal expression of Jacobian determinant of three dimensional renormalized map does not imply the Universal Theorem because the Jacobian determinant, $ \Jac R^nF = \di_y \eps_n \di_z \de_n - \di_z \eps_n  \di_y \de_n $ contains partial derivatives of both $ \eps $ and $ \de $. Moreover, infinitely renormalizable H\'enon map has maximal Lyapunov exponent is zero. Thus $ \ln b $ is the other exponent for two dimensional map. However, since $ \ln b_F $ for three dimensional map is not an exponent but the sum of Lyapunov exponents. Thus two universal numbers for three dimensional maps would be required in order to explain geometric properties of  $ \OO_F $. One of the universal numbers is a counterpart of the average Jacobian of two dimensional map. The universal numbers, $ b_1 $ and $ b_2 $ which represent two dimensional H\'enon-like map in three dimension and contraction from the third dimension were found in \cite{Nam1} under certain conditions. For the precise formulation, see \S \ref{subsec-toy model Henon map}. 
\ssk \\
In the present paper, three dimensional H\'enon-like maps with certain conditions has single invariant $ C^r $ surfaces for any natural number $ 2 \leq r < \infty $ and it is asymptotically slanted plane (Proposition \ref{invariant surfaces on each deep level}). The map from invariant surface to $ xy- $plane defines the renormalization of $ C^r $ H\'enon-like maps and it is the same as the analytic definition of H\'enon renormalization (Proposition \ref{2d scaling map of Cr conjugation})
$$ RF = \La \circ H \circ F^2 \circ H^{-1} \circ \La^{-1} . $$ 
Moreover, Universality Theorem for $ C^r $ H\'enon-like map is re-constructed by invariant surfaces (Theorem \ref{Universality of Cr Henon maps}). It defines the {\em embedded two dimensional H\'enon-like map} in {\em three dimension}. Moreover, two dimensional $ C^r $ H\'enon-like map is embedded in three dimension generically in the set of parameter space of average Jacobian (Theorem \ref{thm-open dense subset of Cr Henon-like maps}). The universal numbers of three dimensional H\'enon-like map, $ b_1 $ which is the average Jacobian of two dimensional $ C^r $ H\'enon-like map and $ b_2 \equiv b_F/b_1 $, we would show the unbounded geometry of $ \OO_F $ for almost everywhere in the parameter space of $ b_1 $ of embedded $ C^r $ H\'enon-like maps (Theorem \ref{Unbounded geometry for model maps}).


\bsk

\section{Preliminaries} \label{preliminaries}  
\subsection{Notations}
For the given map $ F $, if a set $ A $ is related to $F$, then we denote it to be $ A(F) $ or $ A_F $ and $ F $ can be skipped if there is no confusion without $ F $. The domain of $ F $ is denoted to be $ \Dom(F) $. 
If $ F(B) \subset B $, then we call $ B $ is an (forward) invariant set under $ F $. 
The set $ \overline{A} $ in the given topology is called the closure of $ A $. 
For three dimensional map, let us the projection from $ \R^3 $ to its $ x- $axis, $ y- $axis and $ z- $axis be $ \pi_x $, $ \pi_y $ and $ \pi_z $ respectively. Moreover, the projection from $ \R^3 $ to $ xy- $plane be $ \pi_{xy} $ and so on.
\ssk \\
Let $ C^r(X) $ be the Banach space of all real functions on $ X $ for which the $ r^{th} $ derivative is continuous. The $ C^r $ norm of $ h \in C^r(X) $ is defined as follows
$$ \| h \|_{C^r} = \max_{ 1 \leq \,k \leq \,r } \left\{ \| h \|_0,\ \| D^kh \|_0 \right\} . $$
For analytic maps, since $ C^0 $ norm bounds $ C^r $ norm for any $ r \in \N $, we often use the norm, $ \| \cdot \| $ instead of $ \| \cdot \|_0 $ or $ \| \cdot \|_{C^k} $. For the two sets $ S $ and $ T $ in $ \R^3 $, the minimal distance of two sets is defined as
$$ \dist_{\min}(S, T) = \inf \;\{ \dist (p, q)\; | \; p \in S \ \text{and} \ q \in T \}
$$
The set of periodic points of the map $ F $ is denoted by $ \Per_F $.  
$ A = O(B) $ means that there exists a positive number $ C $ such that $ A \leq CB $. Moreover, $ A \asymp B $ means that there exists a positive number $ C $ which satisfies $ \dfrac{1}{C} B \leq A \leq CB$.
\msk

\subsection{Renormalization of two and three dimensional H\'enon-like maps} \label{2d renorm operator}
Two dimensional H\'enon-like map is defined as 
$$ F(x,y ) = (f(x) -\eps(x,y),\ x)   $$
where $ f $ is a unimodal map. Assume that the norm of $ \eps $ is sufficiently small and $ F $ is orientation preserving map. 
Since $ F^2 $ is not H\'enon-like map, the non linear scaling map for renormalization of H\'enon-like map, $ F $. The horizontal map of $ F $ is defined 
$$ H(x,y) = (f(x) -\eps(x,y),\ y). $$
The {\em period doubling} renormalization of $ F $ is defined as 
$$ RF = \La \circ H \circ F^2 \circ H^{-1} \circ \La^{-1} $$
where $ \La(x,y) = (sx,\ sy) $ for the appropriate number $ s<-1 $ in \cite{CLM}. 
Moreover, H\'enon renormalization theory is extended for three dimensional H\'enon-like map in \cite{Nam1} with third coordinate map as follows 
$$ F(x,y,z) = (f(x) - \eps(x,y,z),\ x,\ \de(x,y,z)). $$
We assume that the norms of both $ \eps $ and $ \de $ are sufficiently small and that the three dimensional map $ F $ is analytic throughout this paper. The domain of $ F $ is cubic box and $ F $ has two fixed points and {\em sectionally dissipative} at these points. The horizontal-like map is defined
$$ H(x,y,z) = (f(x) -\eps(x,y,z),\ y,\ z- \de(y,f^{-1}(y),0)). $$
Thus the (period doubling) renormalization of three dimensional map is the natural extension of two dimensional H\'enon-like map as follows 
$$ RF = \La \circ H \circ F^2 \circ H^{-1} \circ \La^{-1} $$
where $ \La(x,y,z) = (sx,\ sy,\ sz) $ for the appropriate number $ s<-1 $. 

\msk
\subsection{Basic facts}
Let the set of infinitely renormalizable maps be $ \II(\bar \eps) $ where the norm $ \| \eps \| $ and $ \| \de \| $ (for three dimensional maps) are bounded above by $ O(\eps) $ where $ \bar\eps $ is a small enough positive number. The following definitions and facts are common in both two and three dimensional H\'enon-like maps in $ \II(\bar \eps) $. \ssk \\
%
%
If $ F $ is $ n- $times renormalizable, then $ R^kF $ is defined as the renormalization of $ R^{k-1}F $ for $ 2 \leq k \leq n $. Denote $ \Dom(F) $ to be the 
box region, $ B $. If the set $ B $ is emphasized with the relation of a certain map $ R^kF $, for example, then denote this region to be $ B(R^kF) $. 
\ssk \\
$ F_k $ denotes $ R^kF $ for each $ k $. Let the coordinate change map which conjugates $ F_k^2|_{\La_k^{-1}(B)} $ and $ RF_k $ is denoted by 
\begin{equation*}
\begin{aligned}
\psi^{k+1}_v \equiv H_k^{-1} \circ \La_k^{-1} \colon \Dom(RF_k) \ra \La_k^{-1}(B)
\end{aligned} 
\end{equation*}
\nin where $ H_k $ is the horizontal-like diffeomorphism and $ \La_k $ is dilation with each appropriate constants $ s_k < -1 $ for each $ k $. Denote $ F_k \circ \psi^{k+1}_v $ by $ \psi^{k+1}_c $. The word of length $ n $ in the Cartesian product, $ W^n \equiv \{ v, c \}^n $ is denoted by $ {\bf w}_n $ or simply $ {\bf w} $. 
Express the compositions of $ \psi^{j}_v $ and $ \psi^{j}_c $ for $ k \leq j \leq n $ as follows
\begin{equation*}
\begin{aligned}
\Psi^n_{k,\,{\bf w}} &= \psi^{k}_{w_1} \circ \psi^{k+1}_{w_2} \circ \cdots \circ \psi^n_{w_{n-k}} 
\end{aligned} 
\end{equation*}
where each $ w_i $ is $ v $ or $ c $ 
and the word $ {\bf w} = (w_1w_2 \ldots w_{n-k}) $ in $ W^{n-k} $. 
The map $ \Psi^n_{k,\,{\bf w}} $ is from $ B(R^nF) $ to $ B(R^kF) $. 
Denote the region $ \Psi^n_{k,\,{\bf w}}(B(R^nF)) $ by $ B^n_{k, {\bf w}} $. In particular, denote $ B^n_{0,\,{\bf w}} $ by $ B^n_{{\bf w}} $ for simplicity. 
We see that 
\begin{equation} \label{eq-diameter of box Bn}
\diam (B^n_{{\bf w}}) \leq C\si^n
\end{equation}
where $ {\bf w} $ is any word of length $ n $ in $ W^n $ for some $ C>0 $ in \cite{CLM} or \cite{Nam}. If $ F $ is a infinitely renormalizable H\'enon-like map, then it has invariant Cantor set
\begin{equation*}
\OO_F = \bigcap_{n=1}^{\infty} \bigcup_{{\bf w} \in\,W^n} B^n_{{\bf w}} 
\end{equation*}
and $ F $ acts on $ \OO_F $ as a dyadic adding machine. The counterpart of the critical value of unimodal renormalizable map is called the {\em tip}
\begin{equation*}
\{\tau_F \} \equiv \bigcap_{n\geq \;\! 0} B^n_{{\bf v}} 
\end{equation*}
where $ {\bf v} = v^n $ for every $ n \in \N $. The word $ {\bf w} \in W^{\infty} $ for each $ w \in \OO $ is called the {\em address} of $ w $. Similarly, the word with finite length $ {\bf w}_n \in W^n $ corresponding the region, $ B^n_{{\bf w}_n} $ is called the address of box. Moreover, since each box, $ B^n_{{\bf w}_n} $ contains a unique periodic point with minimal period, $ 2^n $, the {\em address} of {\em periodic point} is also defined as that of $ B^n_{{\bf w}_n} $. The first successive finite concatenation of the given address, $ \bf w $ is called the {\em subaddress} of $ w $. 
By Distortion Lemma and the average Jacobian with invariant measure, we see the following lemma.
\begin{lem} \label{Distortion lemma}
For any piece $ B^n_{{\bf w}} $ at any point $ w = (x,y,z) \in B^n_{{\bf w}} $, the Jacobian determinant of $ F^{2^n} $ is 
\begin{equation}
\Jac F^{2^n}(w) = b_F^{2^n}(1+ O(\rho^n))
\end{equation}
where $ b $ is the average Jacobian of $ F $ for some $ 0 < \rho <1 $. 
\end{lem}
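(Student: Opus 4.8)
The plan is to reduce the statement to two classical ingredients: the Distortion Lemma applied at level $ n $ to the piece $ B^n_{\bf w} $, and the fact that the dynamics on $ \OO_F $ is a dyadic adding machine with a unique invariant measure $ \mu $. First I would write $ \Jac F^{2^n}(w) $ as a telescoping product
\begin{equation*}
\Jac F^{2^n}(w) = \prod_{k=0}^{2^n - 1} \Jac F\bigl(F^k(w)\bigr),
\end{equation*}
so that $ \log \Jac F^{2^n}(w) = \sum_{k=0}^{2^n-1} \log \Jac F(F^k(w)) $. The $ 2^n $ points $ F^k(w) $, $ 0 \le k < 2^n $, each lie in a distinct level-$ n $ piece $ B^n_{{\bf w}'} $, one for every word $ {\bf w}' \in W^n $, because $ F $ permutes these pieces cyclically as the adding machine. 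Hence the sum is a Birkhoff-type sum of $ \log \Jac F $ over one full cycle through all $ 2^n $ pieces, and it is essentially independent of the starting point $ w $ up to the error controlled by the size of the pieces.

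The second step is to compare this sum to $ 2^n \int_{\OO_F} \log \Jac F \, d\mu = 2^n \log b_F $. Since $ \mu $ assigns mass $ 2^{-n} $ to each piece $ B^n_{{\bf w}'} $, the average $ 2^{-n} \sum_{{\bf w}'} \log \Jac F(p_{{\bf w}'}) $ over any choice of sample points $ p_{{\bf w}'} \in B^n_{{\bf w}'} $ differs from $ \int \log \Jac F \, d\mu $ by at most the oscillation of $ \log \Jac F $ on pieces of level $ n $. By \eqref{eq-diameter of box Bn} these pieces have diameter $ O(\si^n) $, and since $ \log \Jac F $ is Lipschitz (indeed analytic, with $ \Jac F $ bounded away from $ 0 $ because $ \|\eps\|,\|\de\| $ are small), this oscillation is $ O(\si^n) $ per piece; summing and dividing by $ 2^n $ and then multiplying back by $ 2^n $ gives a total error of $ O((2\si)^n) $ in $ \log \Jac F^{2^n}(w) $. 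Exponentiating,
\begin{equation*}
\Jac F^{2^n}(w) = b_F^{2^n} \exp\bigl(O((2\si)^n)\bigr) = b_F^{2^n}\bigl(1 + O((2\si)^n)\bigr),
\end{equation*}
provided $ 2\si < 1 $; one then sets $ \rho = \max\{2\si, \text{other relevant ratios}\} $, or invokes the sharper Distortion Lemma from \cite{CLM}, \cite{Nam} directly to replace the crude $ (2\si)^n $ bound by the stated $ \rho^n $.

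The main obstacle is the quantitative distortion estimate hidden in the phrase ``by Distortion Lemma'': the naive bound above only gives $ (2\si)^n $, which is useless unless $ \si < 1/2 $, whereas the claimed $ \rho^n $ with $ 0 < \rho < 1 $ is genuinely stronger and requires the real distortion machinery — one must exploit that consecutive points $ F^k(w) $ and $ F^k(w') $ for two points $ w, w' $ in the same piece $ B^n_{\bf w} $ stay exponentially close not merely because all pieces are small, but because the branches $ \psi $ are uniform contractions and the nonlinearity of $ F $ is controlled, so the accumulated distortion is a convergent geometric series with ratio bounded by a fixed $ \rho \in (0,1) $ independent of $ n $. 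I would therefore invoke the Distortion Lemma of \cite{CLM} (two-dimensional case) and \cite{Nam1} (three-dimensional case) as a black box for this exponential control, and restrict my own argument to the bookkeeping that turns the one-cycle Birkhoff sum into $ 2^n \log b_F $ via the defining integral of the average Jacobian against the adding-machine measure $ \mu $.
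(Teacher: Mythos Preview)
Your approach is correct and matches the paper's: the paper does not give a proof of this lemma at all, but simply records it as a consequence of ``the Distortion Lemma and the average Jacobian with invariant measure,'' citing \cite{CLM} and \cite{Nam}. Your write-up spells out exactly those two ingredients --- the chain-rule product, the adding-machine combinatorics giving one visit per level-$n$ piece, and the comparison of the resulting Birkhoff sum with $2^n\int_{\OO_F}\log\Jac F\,d\mu$ --- so the method is the same.

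One remark on your self-identified ``obstacle'': your worry that the naive bound $O((2\si)^n)$ is ``useless unless $\si<1/2$'' is unfounded here. For period-doubling renormalization the universal scaling ratio is $\si\approx 0.3995$, so $2\si<1$ and the crude estimate already yields a legitimate $\rho\in(0,1)$. You therefore do not need to invoke a sharper black-box distortion lemma; the elementary argument you wrote is already enough.
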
 
Then there exists the asymptotic expression of $ \Jac R^nF $ for the map $ F \in \II(\bar \eps) $ with $ b_F $ and the universal function.
\ssk
\begin{thm}[\cite{CLM} and \cite{Nam}] \label{Universality of the Jacobian}
For the map $ F \in \II(\bar \eps) $ with small enough positive number $ \bar \eps $, the Jacobian determinant of $ n^{th} $ renormalization of $ F $ is as follows
\begin{equation}
\Jac R^nF = b_F^{2^n}a(x)\, (1+O(\rho^n))
\end{equation}
where $ b_F $ is the average Jacobian of $ F $ and $ a(x) $ is the universal positive function for $ n \in \N $ and for some $ \rho \in (0,1) $.
\end{thm}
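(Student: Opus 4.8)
The plan is to realize $R^nF$ as a conjugate of the high iterate $F^{2^n}$ on the tip piece $B^n_{{\bf v}}$, and to feed this into Lemma~\ref{Distortion lemma} together with the exponential convergence of renormalization to $F_*$. First I would iterate the defining relation $RF_k=\La_k\circ H_k\circ F_k^2\circ H_k^{-1}\circ\La_k^{-1}$, i.e. $RF_k=(\psi^{k+1}_v)^{-1}\circ F_k^2\circ\psi^{k+1}_v$ with $\psi^{k+1}_v=H_k^{-1}\circ\La_k^{-1}$, and telescope the conjugations (each inner $\psi^j_v\circ(\psi^j_v)^{-1}$ cancels). For $k=0,\dots,n-1$ this gives
\[
 R^nF=(\Psi^n_{0,{\bf v}})^{-1}\circ F^{2^n}\circ\Psi^n_{0,{\bf v}},\qquad {\bf v}=v^n,
\]
where $\Psi^n_{0,{\bf v}}=\psi^1_v\circ\dots\circ\psi^n_v\colon\Dom(R^nF)\to B^n_{{\bf v}}$ is the composition of non-linear scaling maps of \S\ref{2d renorm operator}. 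This step is purely formal and identical in dimensions two and three.

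Next I differentiate. Write $\Phi=\Psi^n_{0,{\bf v}}$; the conjugacy gives $F^{2^n}(\Phi(w))=\Phi(R^nF(w))$, hence $\Jac\Phi^{-1}(F^{2^n}(\Phi(w)))=1/\Jac\Phi(R^nF(w))$, and the chain rule applied to $R^nF=\Phi^{-1}\circ F^{2^n}\circ\Phi$ yields, for $w\in\Dom(R^nF)$,
\[
 \Jac R^nF(w)=\Jac F^{2^n}(\Phi(w))\cdot\frac{\Jac\Phi(w)}{\Jac\Phi(R^nF(w))} .
\]
Since $\Phi(w)$ and $\Phi(R^nF(w))$ both lie in the piece $B^n_{{\bf v}}$, Lemma~\ref{Distortion lemma} gives $\Jac F^{2^n}(\Phi(w))=b_F^{2^n}(1+O(\rho^n))$. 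The problem is thus reduced to showing that the scaling ratio $\Jac\Phi(w)/\Jac\Phi(R^nF(w))$ equals $a(x)(1+O(\rho^n))$ for a universal positive function $a$ of $x=\pi_x w$ alone.

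For that ratio I expand $\Jac\Psi^n_{0,{\bf v}}$ as the telescoping product $\prod_{j=1}^{n}\Jac\psi^j_v$ along the trajectory; using the triangular structure of $DH_{j-1}$ one gets $\det DH_{j-1}=f_{j-1}'-\di_x\eps_{j-1}$ (the $\de$-terms drop out of the determinant), hence $\Jac\psi^j_v=s_{j-1}^{-d}/((f_{j-1}'-\di_x\eps_{j-1})\circ\psi^j_v)$ with $d$ the dimension, and the constant factors $s_{j-1}^{-d}$ cancel in the ratio. By hyperbolicity of renormalization $R^kF\to F_*$ exponentially, so $\|\eps_j\|\to0$ (and $\|\de_j\|$ stays controlled), $\psi^j_v\to\psi^*_v$ and $s_j\to s_*$ at rate $\rho^j$, where $H_*$ has first coordinate $f_*$ (no perturbation); only this degeneracy is needed, and it makes the first coordinate of $\psi^*_v=H_*^{-1}\circ\La_*^{-1}$ equal to $f_*^{-1}(s_*^{-1}x)$, a function of $x$ alone, so $\Jac\psi^*_v(p)=s_*^{-d}/f_*'(f_*^{-1}(s_*^{-1}\pi_x p))$ depends only on $\pi_x p$. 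In the highly dissipative regime the scaling maps contract uniformly, so the finitely many initial factors of the product converge to $w$-independent constants and cancel in the ratio, while the rest are governed by iterates of the single map $\psi^*_v$; the resulting infinite product of ratios converges, at rate $O(\rho^n)$, to a universal limit. Because $\Jac\psi^*_v$ and the first coordinate of each $(\psi^*_v)^m$ depend only on the $x$-coordinate, and $\pi_x R^nF(w)\to f_*(\pi_x w)$, this limit depends only on $x$; this is $a(x)$. It is positive since $\Jac R^nF>0$, $b_F^{2^n}>0$, and the limiting factors stay bounded away from $0$ (the $v$-tower orbits avoid the critical point of $f_*$). Assembling the three displays gives $\Jac R^nF(w)=b_F^{2^n}a(x)(1+O(\rho^n))$.

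I expect the main obstacle to be the last step: proving that the telescoping scaling product, compared at $w$ and at $R^nF(w)$, converges geometrically to a universal function of $x$. Unwound, this is the statement that $b_F^{-2^n}\Jac R^nF$ — that is, $b_F^{-2^n}\di_y\eps_n$ in dimension two, and $b_F^{-2^n}(\di_y\eps_n\,\di_z\de_n-\di_z\eps_n\,\di_y\de_n)$ in dimension three — stabilizes, uniformly in the transverse variables, to one and the same universal function of $x$. Beyond the formal parts this rests on the exponential convergence of renormalization (two dimensions, \cite{CLM}; three dimensions, \cite{Nam}, \cite{Nam1}) and on quantitative control of $\eps_n$, and in dimension three also of $\de_n$, over the deep pieces $B^n_{{\bf w}}$ — of the kind underlying the diameter bound \eqref{eq-diameter of box Bn} and the distortion estimates — which is the genuinely technical content and which is what forces the common universal $a(x)$ in both dimensions.
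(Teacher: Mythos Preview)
Your approach is correct and matches the proof in \cite{CLM} (the present paper only cites the result, but reproduces the same argument in the $C^r$ setting in Lemma~\ref{Universal Jacobian determinant of Cr Henon map}). The skeleton is identical: realize $R^nF=(\Psi^n_{0,{\bf v}})^{-1}\circ F^{2^n}\circ\Psi^n_{0,{\bf v}}$, apply the chain rule, feed Lemma~\ref{Distortion lemma} into the $\Jac F^{2^n}$ factor, and reduce to the scaling ratio $\Jac\Psi^n_{0,{\bf v}}(w)/\Jac\Psi^n_{0,{\bf v}}(R^nF(w))$.

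The one substantive difference is how that ratio is handled. Rather than analyzing the raw telescoping product $\prod_j\Jac\psi^j_v$, \cite{CLM} and this paper first put $\Psi^n_0$ into the normal form~\eqref{eq-coordinate change map 3D}, which gives $\Jac\Psi^n_0=\alpha_{n,0}\,\si_{n,0}^{\,d-1}\bigl(1+\di_xS^n_0\bigr)$ up to lower-order terms; Lemma~\ref{Asymptotic non-linear part} then supplies $1+\di_xS^n_0\to v_*'(x)$ at rate $O(\rho^n)$, and since $\pi_xR^nF(w)\to f_*(x)$, the ratio collapses at once (after the tip translations) to $a(x)=v_*'(x-\pi_x\tau_\infty)/v_*'(f_*(x)-\pi_y\tau_\infty)$. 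This packaging absorbs your product into $S^n_0$ once and for all, so the convergence you flag as ``the main obstacle'' is not redone but quoted from Lemma~\ref{Asymptotic non-linear part}. Your picture of the product limit is the right one, with one correction of emphasis: it is the \emph{late} factors ($j$ near $n$) that carry $a(x)$, stabilizing along the $\psi^*_v$-orbit of $w$ and of $R^nF(w)$; the early factors ($j$ small) tend to~$1$ in the ratio because both evaluation points collapse to the tip $\tau_j$.
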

\ssk 
\nin Denote the tip, $ \tau_{F_n} $ to be $ \tau_n $ for $ n \in \N $. The definitions of tip and $ \Psi^n_{k,\,{\bf v}} $ imply that $ \Psi^n_{k,{\bf v}}(\tau_n) = \tau_k $ for $ k < n $. Then after composing appropriate translations, tips on each level moves to the origin as the fixed point
\begin{equation*}
\Psi^n_k(w) = \Psi^n_{k,{\bf v}}(w + \tau_n) - \tau_k
\end{equation*}
for $ k < n $. Notations with the subscript, $ {\bf v} $ is strongly related to the tip. For instance, $ B^n_{k,{\bf v}} $ contains the tip, $ \tau_k $ for every $ n > k $ and $ \Psi^n_{k, {\bf v}} $ is the map from the tip, $ \tau_n $ to the tip $ \tau_k $ for every $ n > k $. Thus in order to emphasize the tip on every deep level, we sometimes use the notation $ B^n_{k,{\tip}} $ or $ \Psi^n_{k, {\tip}} $ instead of $ B^n_{k,{\bf v}} $ or $ \Psi^n_{k, {\bf v}} $. Moreover, if we need to distinguish three dimensional notions from two dimensional one, then we use the subscript, $ 2d $. For example, $ {}_{2d}^{}\Psi^n_{k} $, $ {}_{2d}^{}B^n_{k,{\bf v}} $, $ {}_{2d}^{}t_{n,\,k} $, $ {}_{2d}^{}S^n_k(w) $ and so on.

\msk

\subsection{Three dimensional coordinate change map, $ \Psi^n_k $}
The map $ \Psi^n_k $ is separated non linear part and dilation part after reshuffling
\msk
\begin{equation} \label{eq-coordinate change map 3D}
\begin{aligned}
\Psi^n_k(w) = 
\begin{pmatrix}
1 & t_{n,\,k} & u_{n,\,k} \\[0.2em]
& 1 & \\
& d_{n,\,k} & 1
\end{pmatrix}
\begin{pmatrix}
\alpha_{n,\,k} & & \\
& \si_{n,\,k} & \\
& & \si_{n,\,k}
\end{pmatrix}
\begin{pmatrix}
x + S^n_k(w) \\
y \\[0.2em]
z + R^n_k(y)     
\end{pmatrix} 
\end{aligned} \msk
\end{equation} 
where $ \alpha_{n,\,k} = \si^{2(n-k)}(1 + O(\rho^k)) $ and $ \si_{n,\,k} = (-\si)^{n-k}(1 + O(\rho^k)) $. The non-linear map $ x + S^n_k(w) $ has following asymptotic with the universal diffeomorphism $ v_*(x) $.
\begin{lem} \label{Asymptotic non-linear part}
Let $ x + S^n_k(w) $ be the first coordinate map of three dimensional coordinate change map in \eqref{eq-coordinate change map 3D} for infinitely renormalizable H\'enon-like map. Then 
\begin{equation} 
|[x +S^n_0(x,y,z)] - [v_*(x) + a_{F,1}y^2 + a_{F,2}yz + a_{F,3}z^2]| = O(\rho^n)
\end{equation}
where constants $ |a_{F,1}| $, $ |a_{F,2}| $ and $ |a_{F,3}| $ are $ O(\bar \eps) $ for $ \rho \in 
(0,1) $. Moreover, for each fixed $ y $ and $ z $, the above asymptotic has $ C^1 $ convergence with the variable $ x $. 
\end{lem}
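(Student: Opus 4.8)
\emph{Proof idea.} The plan is to treat this as the three dimensional counterpart of the two dimensional computation of \cite{CLM} (see also \cite{Nam}), where $\pi_x$ of the coordinate change map is shown to converge, at the scale of the critical Cantor set, to $v_*(x)$ plus a quadratic-in-$y$ correction; here one carries the extra variable $z$ along and reads off the two new quadratic monomials $yz$, $z^2$ produced by the dependence of $\eps_k$ on $z$. Concretely, I would combine the block normal form \eqref{eq-coordinate change map 3D} with the recursion $\Psi^n_0=\psi^1_v\circ\Psi^n_1$, where $\psi^1_v=H_0^{-1}\circ\La_0^{-1}$, and the self-similarity relating $\Psi^n_1$ for $F$ to $\Psi^{n-1}_0$ for $RF$ (valid up to the tip translations), to set up a recursion for the Taylor jet of order $\le 2$ in $(y,z)$ of $S^n_0$ at the base point $(x,0,0)$. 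Since the first coordinate of $H_0^{-1}$ is $f_0^{-1}$ corrected through $\eps_0$ (and, via the third coordinate, through $\de_0$), each application of $\psi^{k+1}_v$ inserts into $S^n_0$ the data of $f_k$, $s_k$ and of the $O(\bar\eps)$ functions $\eps_k,\de_k$ at a definite $\si$-scale.

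First I would treat the $(y,z)$-independent part: restricting \eqref{eq-coordinate change map 3D} to $y=z=0$, the conjugacy reduces --- modulo the contributions of $\eps_k,\de_k$, which by Lemma \ref{Distortion lemma}, Theorem \ref{Universality of the Jacobian} and the estimates of \cite{Nam1} are negligible next to $\rho^n$ --- to a composition of normalized univariate inverse branches built from $f_k$ and $s_k$. Hyperbolicity of the one dimensional renormalization operator ($f_k\to f_*$, $s_k\to s_*$ at rate $O(\rho^k)$) then yields $x+S^n_0(x,0,0)=v_*(x)+O(\rho^n)$, with $v_*$ the universal diffeomorphism of \cite{CLM}. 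Next the part linear in $(y,z)$: the entries $t_{n,k}$, $u_{n,k}$ of the unipotent factor in \eqref{eq-coordinate change map 3D} are, by construction, the full first order $(y,z)$-coefficients of $\pi_x\circ\Psi^n_k$, so after the reshuffling the residual derivatives $\di_y S^n_0(x,0,0)$ and $\di_z S^n_0(x,0,0)$ retain only what a constant matrix entry cannot capture, and that residual is $O(\rho^n)$ by the universality asymptotics.

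The crux is the part quadratic in $(y,z)$. These monomials of $\pi_x\circ\Psi^n_0$ are produced, level by level, by the dependence of $\eps_k$ on $(y,z)$ together with the nonlinearity of the inverse unimodal branches; after the rescaling built into \eqref{eq-coordinate change map 3D}, the contribution of level $k$ has a definite size that decays geometrically in $k$, so $\di_y^2 S^n_0$, $\di_y\di_z S^n_0$, $\di_z^2 S^n_0$ at $(x,0,0)$ form absolutely convergent series with limits $2a_{F,1}$, $a_{F,2}$, $2a_{F,3}$ and convergence rate $O(\rho^n)$; each limit is $O(\bar\eps)$ since it is assembled out of the $O(\bar\eps)$ data $\eps_k,\de_k$. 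I expect the principal obstacle to be the \emph{$x$-independence} of these three coefficients: to obtain genuine constants rather than universal functions of $x$ one must feed in the precise universal form of $\eps_*$ --- in the shape $\eps_k(x,y,z)=b_1^{2^k}a(x)\,y\,(1+\cdots)$ together with its three dimensional refinement from \cite{Nam1} --- and check that the $x$-profile $a(x)$ is transported through the remaining inverse branches in just the way that makes it cancel, exactly as in the two dimensional computation of \cite{CLM}.

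Two technical points finish the argument. The $(y,z)$-monomials of order $\ge 3$ contribute at most $O(\bar\eps)$ times a power of the box-diameter bound $C\si^n$ of \eqref{eq-diameter of box Bn}, hence $o(\rho^n)$ uniformly on $\Dom(F)$, and may be discarded. And the claimed $C^1$ convergence in $x$ for fixed $(y,z)$ follows from analyticity: the $C^0$ estimate holds, with uniform bounds, on a fixed complex neighborhood of the real domain, and a Cauchy estimate then converts the $O(\rho^n)$ sup bound into an $O(\rho^n)$ bound for $\di_x$ on the stated domain.
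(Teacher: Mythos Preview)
The paper does not actually prove this lemma; it is stated in the Preliminaries as a known fact, with the detailed recursive computation deferred to \cite{Nam}, \cite{Nam1} (and ultimately to Section~7.2 of \cite{CLM} for the two dimensional prototype). Your overall strategy --- set up the recursion $\Psi^n_0=\psi^1_v\circ\Psi^n_1$, decompose $x+S^n_0$ into its Taylor jet of order $\le 2$ in $(y,z)$, and control each piece by the hyperbolicity of one dimensional renormalization together with the geometric decay of the $(y,z)$-corrections --- is precisely the approach those references take, and indeed the paper itself invokes the same recursive formulas later, in the proof of Theorem~\ref{Universal estimation of scaling maps}.

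There is, however, a genuine error in your handling of the $(y,z)$-monomials of order $\ge 3$. You claim they contribute ``$O(\bar\eps)$ times a power of the box-diameter bound $C\si^n$ of \eqref{eq-diameter of box Bn}''. But \eqref{eq-diameter of box Bn} bounds the diameter of the \emph{image} $B^n_{\bf w}=\Psi^n_{0,{\bf w}}(B(R^nF))\subset B(F)$, not of the \emph{domain} $B(R^nF)$ on which $S^n_0$ is defined; the variables $(y,z)$ range over a box of fixed size, so cubic-and-higher terms are a priori $O(1)$, not $O(\si^n)$. The correct route, as in \cite{CLM}, is to show via the recursion that the second-order partial derivatives $\di_y^2 S^n_0$, $\di_y\di_z S^n_0$, $\di_z^2 S^n_0$ converge at rate $O(\rho^n)$ \emph{uniformly on the whole domain} (not only at $y=z=0$) to the constants $2a_{F,1}$, $a_{F,2}$, $2a_{F,3}$; Taylor's theorem with integral remainder then absorbs all higher-order behavior automatically. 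This also resolves your worry about $x$-independence: the constancy of the limit Hessian falls out of the recursion itself, and does not require feeding in the universal form of $\eps_k$ --- which would in any case be logically delicate, since in \cite{Nam1} that universal form is established downstream of the present lemma.
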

\nin The constants $ t_{n,\,k} $, $ u_{n,\,k} $ and $ d_{n,\,k} $ converges to some numbers --- say $ t_{*,\,k} $, $ u_{*,\,k} $ and $ d_{*,\,k} $ respectively --- super exponentially fast as $ n \ra \infty $. Moreover, estimation of the above constants is following
\begin{equation} \label{eq-estimation of t,u,d}
|t_{n,\,k}|,\ |u_{n,\,k}|,\ |d_{n,\,k}| \leq C\bar\eps^{2^k}
\end{equation} 
for $ k<n $ and for some constant $ C>0 $. Lemma 5.1 in \cite{Nam2} contains the detailed calculation for these constants. Moreover, Lemma 5.2 in \cite{Nam2} implies that 
\begin{equation} \label{eq-C1 convergence of R-n-k}
\| R^n_k \|_{C^1} \leq C \si^n
\end{equation}
for some $ C>0 $ independent of $ n $. 
\comm{*********************************
In this paper, we confuse the map $ \Psi^n_{k,\,{\bf v}} $ with $ \Psi^n_k $ to obtain the simpler expression of each coordinate map of $ \Psi^n_{k,\,{\bf v}} $. For example, the third coordinate expression of $ \Psi^n_k $
\begin{equation*}
\si_{n,\,k}\,d_{n,\,k}\,y + \si_{n,\,k}\,\big[\,z + R^n_k(y)\,\big] 
\end{equation*}
means that $ \si_{n,\,k}\,d_{n,\,k}\,(y + \tau_n^y) + \si_{n,\,k}\,\big[\,z + \tau_n^z + R^n_k(y + \tau_n^y)\,\big] - \tau_k $ where $ \tau_n = (\tau_n^x,\; \tau_n^y,\; \tau_n^z) $. By the same way, the first coordinate map
\begin{equation*}
\alpha_{n,\,k}\,\big[\,x + S^n_k(w)\,\big] + \si_{n,\,k}\,t_{n,\,k}\,y + \si_{n,\,k}\,u_{n,\,k}\,\big[\,z + R^n_k(y) \,\big]
\end{equation*}
means that
\msk
\begin{equation*}
\alpha_{n,\,k}\,\big[\,(x + \tau_n^x) + S^n_k(w + \tau_n)\,\big] + \si_{n,\,k}\,t_{n,\,k}\,(y + \tau_n^y) + \si_{n,\,k}\,u_{n,\,k}\,\big[\,(z + \tau_n^z) + R^n_k(y+ \tau_n^y) \,\big] - \tau_k
\end{equation*} \ssk
for $ k < n $.
*******************************}
\nin Recall the following definitions 
for later use
\ssk
\begin{equation*}
\begin{aligned}
\La_n^{-1}(w) =  \si_n \cdot w , \quad  \psi^{n+1}_v(w) =  H^{-1}_n (\si_n w) , \quad   
\psi^{n+1}_c(w) =  F_n \circ H^{-1}_n (\si_n w)  \\[0.3em]
\psi^{n+1}_v (B(R^{n+1}F)) = B^{n+1}_v ,  \qquad \psi^{n+1}_c (B(R^{n+1}F)) = B^{n+1}_c
\end{aligned} \ssk
\end{equation*}
for each $ n \in \N $.

\msk
\subsection{Toy model H\'enon-like maps} \label{subsec-toy model Henon map}
Let H\'enon-like map satisfying $ \eps(w) = \eps(x,y) $, that is, $ \di_{z} \eps \equiv 0 $ be {\em toy model H\'enon-like map}. Denote the toy model map by $ F_{\mod} $. Then the projected map $ \pi_{xy} \circ F_{\mod} = F_{2d} $ from $ B $ to $ \R^2 $ is exactly two dimensional H\'enon-like map. 
If $ F_{\mod} $ is renormalizable, then we have $ \pi_{xy} \circ RF_{\mod} = RF_{2d} $. 



\comm{**************************
\nin Recall that $ \Jac F_{2d} $ is $ \di_y \eps(x,y) $. Since $ F_{2d} $ is an orientation preserving diffeomorphism, 
$ \di_y \eps(x,y) $ has the positive infimum. Let this infimum be $ m_{2d} $. 
Similarly, define $ m_{2d,\,n} = \displaystyle\inf \left\{ \,\di_y \eps_n(x,y) \;| \; {(x,\,y) \in \,B(R^nF_{2d})} \, \right\} $ for $ n^{th} $ renormalized map, $ F_{2d,\,n} \equiv R^nF_{2d} $. 
Recall that $ \Psi^n_{0,\,2d} = ( \alpha_{n,\,0}\,(x+ S^n_0(w) + \si_{n,\,0}\, t_{n,\,0}\cdot y ,\ \si_{n,\,0}\cdot y  ) $. Thus the derivative of $ \Psi^n_{0,\,2d} $ at the tip is as follows
\msk
\begin{equation*}
\begin{aligned}
D\Psi^n_{0,\,2d} =
\begin{pmatrix}
\alpha_{n,\,0}\,(1+ \di_xS^n_0) & \alpha_{n,\,0}\cdot \di_yS^n_0 + \si_{n,\,0}\, t_{n,\,0} \\[0.3em]
0 & \si_{n,\,0}
\end{pmatrix} .
\end{aligned} \msk
\end{equation*}

\begin{lem} \label{estimation of 2d min-norm}
Let $ F_{\mod} \in \II(\bar \eps) $ with sufficiently small $ \bar \eps > 0 $. Then the infimum of the derivative of two dimensional map, $ m(DF^{2^n}_{2d}) \asymp \si^n b_1^{2^n} $ for every $ n \in \N $. 
\end{lem}

\msk
\nin By the above lemma, we obtain the estimation of $ m(DF_{2d}^{N}) $ for each $ N $. Each natural number $ N $ can be expressed as a dyadic number
. Let us assume that 
$$ N = \sum_{j = 0}^k 2^{m_j} $$
where $ m_k > m_{k-1} > \cdots > m_1 > m_0 \geq 0 $. Then we estimate the minimum expansion rate as follows
$$ m(DF_{2d}^{N}) \geq C b_1^N \si^{\sum_{j=0}^k m_j}  $$
for some $ C > 0 $ independent of $ n $. Observe that $ \log_2 N \asymp \sum_{j=0}^k m_j $ for each big enough $ N $. Let us choose a number smaller than $ \si $. For example, let us take $ \frac{1}{4} < \si $. Thus
$$ \si^{\sum_{j=0}^k m_j} \asymp \si^{\log_2 N} \geq \left( \frac{1}{4} \right)^{ \log_2 N} = \frac{1}{N^2} $$ 
Then the minimum expansion rate has the lower bound as follows
$$ m(DF_{2d}^{N}) \geq C \frac{b_1^N}{N^{\alpha}}  $$
where $ -\alpha < \log_2 \si < 0 $ \,for some $ C>0 $.

*********************************}

\begin{prop} \label{renormalization of toy model}
Let $ F_{\mod} = (f(x) - \eps_{2d}(x,y),\ x,\ \de(w)) $ be a toy model diffeormorphism in $ \II(\bar \eps) $. Then $ n^{th} $ renormalized map $ R^nF_{\mod} $ is also a toy model map, that is,
$$ \pi_{xy} \circ R^nF_{\mod} = R^nF_{2d} $$
for every $ n \in \N $. Moreover, 
$ \eps_{2d, n}(x,y) = \ (b_{1})^{2^n}a(x)\:\! y (1+O(\rho^n)) $ where $ b_{1} $ is the average Jacobian of two dimensional map, $ F_{2d} = \pi_{xy} \circ F_{\mod} $. 
\end{prop}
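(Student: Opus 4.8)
The plan is to treat the two claims separately: closure of the toy model class under renormalization, which I would establish by induction on $n$, and the universal form of $\eps_{2d,n}$, which reduces to the two dimensional universality theory already recorded above.

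For the first claim I would induct on $n$, carrying the statement ``$R^nF_{\mod}$ is a toy model map and $\pi_{xy}\circ R^nF_{\mod}=R^nF_{2d}$''; note that $R^nF_{\mod}$ is renormalizable for every $n$ because $F_{\mod}\in\II(\bar\eps)$. The case $n=0$ is the hypothesis together with the definition $F_{2d}=\pi_{xy}\circ F_{\mod}$. For the inductive step, the identity $\pi_{xy}\circ RG=RG_{2d}$, valid for renormalizable toy model maps $G$ (recorded in \S\ref{subsec-toy model Henon map}), applies to $G=R^nF_{\mod}$ and gives
\[
\pi_{xy}\circ R^{n+1}F_{\mod}=R\bigl(\pi_{xy}\circ R^nF_{\mod}\bigr)=R\bigl(R^nF_{2d}\bigr)=R^{n+1}F_{2d}.
\]
It remains to check that $R^{n+1}F_{\mod}$ is again a toy model map, i.e. that $\di_z$ of its first coordinate vanishes identically; but the first coordinate of $R^{n+1}F_{\mod}$ equals $\pi_x\circ R^{n+1}F_{\mod}=\pi_x\circ R^{n+1}F_{2d}$, a function of $(x,y)$ alone of the form $f_{n+1}(x)-\eps_{2d,n+1}(x,y)$, while its second coordinate is $\pi_y\circ R^{n+1}F_{2d}=x$; this closes the induction. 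One can also see the closure directly: every factor of $RF_{\mod}=\La\circ H\circ F_{\mod}^2\circ H^{-1}\circ\La^{-1}$ has first two coordinates independent of $z$ (for $\La^{\pm1}$ since it is diagonal, for $H^{\pm1}$ and $F_{\mod}^2$ since $\di_z\eps\equiv0$), and this property is stable under composition.

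For the ``moreover'' part, $F_{2d}=\pi_{xy}\circ F_{\mod}$ is a genuine two dimensional H\'enon-like map, so $\di_y\eps_{2d,n}(x,y)=\Jac R^nF_{2d}(x,y)$, and its average Jacobian is $b_1$ by definition. The quickest route is to apply the two dimensional universality of $\eps_n$ (a theorem of \cite{CLM}) to $F_{2d}$, which gives at once $\eps_{2d,n}(x,y)=b_1^{2^n}a(x)\,y\,(1+O(\rho^n))$. Staying inside the present formalism instead, Theorem~\ref{Universality of the Jacobian} gives $\di_y\eps_{2d,n}(x,y)=b_1^{2^n}a(x)(1+O(\rho^n))$ uniformly in $(x,y)$, and integrating in $y$ --- the constant of integration $\eps_{2d,n}(x,0)$ being of order $O(\rho^n b_1^{2^n})$ and hence absorbed into the error --- recovers the same expression.

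The part that demands the most care is the bookkeeping in the direct verification that $R$ preserves the toy model structure: one must confirm that the specific horizontal-like conjugacy $H(x,y,z)=(f(x)-\eps(x,y,z),\,y,\,z-\de(y,f^{-1}(y),0))$ appearing in the three dimensional renormalization really does have $z$-independent first two coordinates once $\di_z\eps\equiv0$, and that the $z$-variable never feeds back into the $x$- or $y$-slot through $H^{-1}$ or $F_{\mod}^2$. Once that ``triangular in $z$'' structure is isolated the induction is purely formal, and the second claim is, at bottom, a citation of the two dimensional theory applied to $F_{2d}$.
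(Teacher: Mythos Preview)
Your argument is correct and follows the same overall scheme as the paper: induction for closure of the toy model class under $R$, then two dimensional universality for the form of $\eps_{2d,n}$. Two small points of comparison are worth noting.

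For the closure step the paper does not simply cite the line in \S\ref{subsec-toy model Henon map}; it verifies it by writing out $\pi_x\circ PRF_{\mod}$ and reducing the question to whether the first coordinate $\phi^{-1}$ of $H_{\mod}^{-1}$ is $z$-independent, which it checks by implicit differentiation of $f\circ\phi^{-1}(w)=x+\eps(\phi^{-1}(w),y)$. Your ``triangular in $z$'' observation is a cleaner way to see the same fact: since $H$ has first two coordinates $(f(x)-\eps(x,y),\,y)$ independent of $z$, so does $H^{-1}$, and likewise $F_{\mod}$, $F_{\mod}^2$, $\La^{\pm1}$; the composition inherits this.

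For the second claim the paper takes a slightly longer route: it applies the Jacobian universality (Theorem~\ref{Universality of the Jacobian}) separately to $F_{\mod}$ and to $F_{2d}$, uses $\Jac R^nF_{\mod}=\di_y\eps_{2d,n}\cdot\di_z\de_n$, and then argues that the three dimensional and two dimensional universal functions $a(x)$ and $a_{2d}(x)$ coincide (both being limits of $\Jac\Psi^n_{\tip}(w)/\Jac\Psi^n_{\tip}(F_nw)$). This buys the extra conclusion $\di_z\de_n=b_2^{2^n}(1+O(\rho^n))$, which is used immediately after the proposition. Your direct appeal to the two dimensional theory of \cite{CLM} gets the stated formula faster but does not produce this byproduct.

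One correction to your integration route: the constant of integration $\eps_{2d,n}(x,0)$ need not be $O(\rho^n b_1^{2^n})$; it is an arbitrary function of $x$ that is absorbed into $f_n$ (this is exactly how the paper handles the analogous step in the proof of Theorem~\ref{Universality of Cr Henon maps}).
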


\comm{******************
\begin{proof}
Firstly, let us show that the space of infinitely renormalizable toy model H\'enon-like map is invariant under renormalization operator. Let us compare the first coordinates of pre-renormalization of the toy model map and two dimensional map. 
Recall that the first coordinate map of $ \pi_x \circ PRF_{\mod} $ is as follows
\begin{equation}
\begin{aligned}
&\pi_x \circ PRF_{\mod} \\
= \ & f(f(x) - \eps \circ F \circ H^{-1}_{\mod}(w)) - \eps \circ F^2 \circ H^{-1}_{\mod}(w) \\
= \ & f(f(x) - \eps(x,\, \phi^{-1}(w),\, \de \circ H^{-1}_{\mod}(w))) - \eps(f(x) - \eps \circ F \circ H^{-1}_{\mod}(w),\,x,\, \de \circ F \circ H^{-1}_{\mod}(w)) \\
= \ & f(f(x) - \eps(x, \phi^{-1}(w))) - \eps(f(x) - \eps \circ F \circ H^{-1}_{\mod}(w),x ) \\
= \ & f(f(x) - \eps(x, \phi^{-1}(w))) - \eps(f(x) - \eps(f(x) - \eps(x, \phi^{-1}(w)))) .
\end{aligned} \msk
\end{equation}
Similarly, we have
\begin{equation}
\begin{aligned}
\pi_x \circ PRF_{2d} 
= \  f(f(x) - \eps(x, \phi_{2d}^{-1}(w))) - \eps(f(x) - \eps(f(x) - \eps(x, \phi_{2d}^{-1}(w))))
\end{aligned} \msk
\end{equation}
where $ \phi_{2d}^{-1}(w) $ is the first coordinate map of $ H_{2d}^{-1}(w) $ which is the inverse of the two dimensional horizontal diffeomorphism. Then it suffice to show that $ \phi^{-1} = \phi_{2d}^{-1} $. Recall that the $ \phi^{-1} $, the first coordinate map of $ H^{-1}_{\mod} $ satisfies the following equation
\begin{equation*}
\begin{aligned}
f \circ \phi^{-1}(w) = \  x + \eps \circ H^{-1}_{\mod} = \ x + \eps (\phi^{-1}(w),y) .
\end{aligned}
\end{equation*}
Thus by the chain rule, we obtain
\begin{equation*}
f' \circ \phi^{-1}(w) \cdot \di_{z}\phi^{-1}(w) = \di_x \eps \circ H^{-1}_{\mod}(w) \cdot \di_{z}\phi^{-1}(w) .
\end{equation*}
Since $ f'(x) \asymp -1 $ on $ f(V) $ and $ \| \:\! \eps \|_{C^1} = O(\bar \eps) $, with the small enough $ \bar \eps $, $ \di_{z}\phi^{-1}(w) \equiv 0 $
, that is, $ \phi^{-1}(w) = \phi^{-1}(x,y) $.
\ssk \\
Thus if $ \pi_{xy} \circ F_{\mod} = F_{2d} $, then $ PRF_{\mod} $ is the same as $ PRF_{2d} $. Let $ \La_{2d}(x,y) = (sx, sy) $ and $ B_{2d} $ is the two dimensional box domain of $ F_{2d} $. Thus 
$$ \La^{-1}(B) = \La_{2d}^{-1}(B_{2d}) \times {I}^z $$ 
Then it implies that $ \pi_{xy} \circ \La = \La_{2d} $. Hence, $ \pi_{xy} \circ RF_{\mod} = RF_{2d} $. By induction, $ \pi_{xy} \circ R^nF_{\mod} = R^nF_{2d} $ for every $ n \in \N $.
\ssk \\
Secondly, let us consider the asymptotic expression of $ \eps_n $ and $ \de_n $ with Lyapunov exponents of $ F_{\mod} $. By Universality theorem of Jacobian for two and three dimensional H\'enon-like maps, $ \Jac R^nF $ and $ \Jac R^nF_{2d} $ have each universal asymptotic expression
\begin{equation*}
\begin{aligned}
\Jac R^nF(w) & = \ b^{2^n}a(x) (1 +O(\rho^n)) \\
\Jac R^nF_{2d}(w) & = \ b_1^{2^n}a_{2d}(x) (1 +O(\rho^n)) 
\end{aligned}
\end{equation*} 
where $ a(x) $ and $ a_{2d}(x) $ are the universal positive functions for some $ \rho \in (0,1) $. The fact that $ \di_z \eps \equiv 0 $ implies the equation of the Jacobian determinant, $ \Jac R^nF_{\mod} = \di_y \eps_n \cdot \di_z \de_n $. Since the map $ \eps_n $ of $ R^nF_{2d} $ is the same as $ R^nF_{\mod} $, we have the equation as follows
\begin{equation*}
\Jac R^nF(w) = b^{2^n}a(x) (1 +O(\rho^n)) = b_1^{2^n}a_{2d}(x) (1 +O(\rho^n)) \cdot \di_z \de_n .
\end{equation*}
Thus it suffice to show that $ a(x) $ is identically same $ a_{2d}(x) $. The $ a_{2d}(x) $ is the limit of the following two dimensional maps
$$ \lim_{n \ra \infty} \frac{\Jac \Psi^n_{\tip}(w)}{\Jac \Psi^n_{\tip}(F_nw)} . $$
By Lemma \ref{asymptotics of S for k}, $ 1 + \di_x S^n_0 $ converges to the universal function, $ v_* '(x) $ exponentially fast. Since both two and three dimensional coordinate change maps, $ \Jac \Psi^n_{\tip} = 1 + \di_x S^n_0 $ is the determinant of Jacobian up to the appropriate dilation, these maps have the same universal limit uniformly. Hence, $ a(x) = a_{2d}(x) $. Moreover, the fact that $ b = b_1 b_2 $ and asymptotic of $ \Jac R^nF $ implies that $ \di_z \de_n = b_2^{2^n} (1 + O(\rho^n)) $.
\end{proof}
**********************}
\ssk
\nin Let $ b_{\mod} $ be the average Jacobian of $ F_{\mod} \in \II(\bar \eps) $. Define another number, $ b_2 $ as the ratio $ b_{\mod}/b_1 $. Then by the above Proposition $ \di_z \de_n \asymp b_2^{2^n} $ for every $ n \in \N $, which is another universal number. Let the following map be a {\em perturbation} of toy model map, $ F_{\mod} (w) = (f(x) - \eps_{2d}(x,y),\;x,\;\de(w)) $ 
\begin{equation}  \label{perturbation of model maps infty}
F(w) = (f(x) - \eps(w),\ x,\ \de(w))
\end{equation}
where $ \eps(w) = \eps_{2d}(x,y) + \widetilde{\eps}(w) $. Thus $ \di_{z} \eps(w) = \di_{z} \widetilde{\eps}(w) $. If $ \| \widetilde{\eps} \| $ is sufficiently small, then $ F $ is called a {\em small perturbation} of $ F_{\mod} $. Let us consider the block matrix form of $ DF $. \msk
\begin{equation} \label{matrix symbolic expression of DF infty}
\begin{aligned}
DF  = 
\renewcommand{\arraystretch}{1.3}
 \left(
\begin{array} {cc|c}
\multicolumn{2}{c|}{\multirow{2}{*}{$D \widetilde {F}_{2d}$}} & \di_{z} \eps  \\ 
 & & { 0} \\    \hline  
\di_x \de & \di_y \de &\di_{z} \de
\end{array}
\right)
= \left( \renewcommand{\arraystretch}{1.3} \begin{array}{c|c}
A & B \\
\hline
C & D
\end{array} \right)
,\quad 
DF_{\mod}  = 
\renewcommand{\arraystretch}{1.3}
 \left(
\begin{array} {cc|c}
\multicolumn{2}{c|}{\multirow{2}{*}{$D {F}_{2d}$}} & 0  \\ 
 & & { 0} \\    \hline  
\di_x \de & \di_y \de &\di_{z} \de
\end{array}
\right)
= \left( \renewcommand{\arraystretch}{1.3} \begin{array}{c|c}
A_1 & {\bf 0} \\
\hline
C & D
\end{array} \right)
\end{aligned} \bsk
\end{equation}
where \ssk $ D\widetilde{F}_{2d} = \begin{pmatrix}
f'(x) - \di_x \eps(w) & -\di_y \eps(w) \\[0.3em]
1  & 0
\end{pmatrix} $\ and\ $ D{F}_{2d} = \begin{pmatrix}
f'(x) - \di_x \eps_{2d}(x,y) & -\di_y \eps_{2d}(xy) \\[0.3em]
1  & 0
\end{pmatrix} $ respectively.
Observe that if $ B \equiv {\bf 0} $, then $ F $ is $ F_{\mod} $. 
Define $ m(A) $ as $ \| A^{-1} \|^{-1} $ and it is called the minimum expansion (or strongest contraction) rate of $ A $.

\ssk
\begin{lem}[Lemma 7.4 in \cite{Nam1}] \label{Invariance of cone filed perturbation infty}
Let $ F $ be a small perturbation of $ F_{\mod} $ defined in \eqref{perturbation of model maps infty}. Let $ A $, $ A_1 $, $ B $, $ C $ and $ D $ be components of the block matrix defined in \eqref{matrix symbolic expression of DF infty}. Suppose that $ \| D \| \leq \frac{\rho_1}{2} \cdot m(A_1) $ for some $ \rho_1 \in (0,1) $. Suppose \ssk also that $ \| B \| \| C \| \leq \rho_0 \cdot m(A) \cdot m(D) $ where $ \rho_0 < \frac{\kappa \gamma}{2} $ for sufficiently small $ \gamma > 0 $. Then there exist the continuous invariant plane field over the given invariant compact set, $ \Gamma $.
\end{lem}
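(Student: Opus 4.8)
The plan is to establish invariance of a suitable cone field on $\Gamma$ under $DF$ and then invoke the graph transform (or equivalently the Hadamard–Perron argument for a contracting invariant splitting) to conclude the existence of the continuous invariant plane field. The key observation is that $F$ is a small perturbation of $F_{\mod}$, and for $F_{\mod}$ the block $B\equiv\mathbf 0$, so $DF_{\mod}$ already has the block-lower-triangular form with invariant splitting $\R^2\oplus\R$ corresponding to the $xy$-plane and the $z$-direction. The hypothesis $\|D\|\le\tfrac{\rho_1}{2}m(A_1)$ encodes sectional dissipativeness: the $z$-contraction $\|D\|$ is dominated by the strongest contraction $m(A_1)$ of the two-dimensional part, so for the unperturbed map the $xy$-plane field is attracting in the relevant sense. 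The hypotheses on $B$ and $C$ say that the off-diagonal coupling introduced by the perturbation (and by $\di_x\de,\di_y\de$) is quantitatively small: $\|B\|\,\|C\|\le\rho_0\,m(A)\,m(D)$ with $\rho_0$ small compared to $\kappa\gamma$.

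First I would set up the invariant cone family. For each point $p\in\Gamma$ define a cone $\CC^{hor}_p\subset\R^3$ around the $xy$-plane of width $\gamma$, i.e. $\{(\xi,\eta):\|\eta\|\le\gamma\|\xi\|\}$ in the $\R^2\oplus\R$ splitting, and its complementary cone $\CC^{ver}_p$ around the $z$-axis. The core computation is to check that $DF_p(\CC^{hor}_p)\subset\CC^{hor}_{F(p)}$, i.e. that writing $DF_p=\begin{pmatrix}A&B\\ C&D\end{pmatrix}$ and a vector $(\xi,\eta)$ with $\|\eta\|\le\gamma\|\xi\|$, the image $(A\xi+B\eta,\,C\xi+D\eta)$ still lies in the $\gamma$-cone. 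Estimating $\|C\xi+D\eta\|\le\|C\|\|\xi\|+\|D\|\gamma\|\xi\|$ and $\|A\xi+B\eta\|\ge m(A)\|\xi\|-\|B\|\gamma\|\xi\|$, the cone inclusion reduces to an inequality of the form $\|C\|+\gamma\|D\|\le\gamma\big(m(A)-\gamma\|B\|\big)$, which the hypotheses on $\|B\|\|C\|$, $\|D\|$ and the smallness of $\gamma$ (together with $\rho_0<\tfrac{\kappa\gamma}{2}$, where $\kappa$ measures the spectral gap $m(A)/\|D\|$ coming from $\|D\|\le\tfrac{\rho_1}{2}m(A_1)$ and $m(A)\asymp m(A_1)$ for small perturbations) are precisely tuned to guarantee. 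Simultaneously one checks the dual inclusion $DF^{-1}_p(\CC^{ver}_p)\subset\CC^{ver}_{F^{-1}(p)}$ using the block structure of $(DF_p)^{-1}$, and the domination estimate: on $\CC^{hor}$ the map expands (or contracts least) by roughly $m(A)$ while on $\CC^{ver}$ it contracts by roughly $\|D\|$, and $\kappa=m(A)/\|D\|$ gives the required gap so that the two rates are separated.

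Next, with the invariant contracting cone field in hand, I would apply the standard graph-transform fixed-point argument on $\Gamma$. For each $p\in\Gamma$ consider the space of plane fields $E=\{E_p\}_{p\in\Gamma}$ with $E_p\subset\CC^{hor}_p$, equipped with the sup-distance; the graph transform $\GG(E)_{F(p)}:=DF_p(E_p)$ maps this space into itself by cone invariance and is a contraction in the sup-metric with rate controlled by the domination constant $\rho_0/(\kappa\gamma)<\tfrac12$. Its unique fixed point is the desired invariant plane field; continuity of $p\mapsto E_p$ follows from uniform continuity of $p\mapsto DF_p$ on the compact set $\Gamma$ and the uniform contraction, by the usual argument that a uniform limit (under a contraction with continuous data) of continuous sections is continuous. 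Since this is Lemma 7.4 of \cite{Nam1}, I would largely cite that argument and only reproduce the cone-inclusion inequality explicitly to show how $\rho_0$, $\rho_1$, $\gamma$ and $\kappa$ interact.

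\textbf{Main obstacle.} The delicate point is the bookkeeping of constants: one must verify that $m(A)\asymp m(A_1)$ and $\|B\|$ remains small after the perturbation (so that $m(A)-\gamma\|B\|$ stays comparable to $m(A_1)$), and then choose $\gamma$ small enough that $\rho_0<\tfrac{\kappa\gamma}{2}$ while still keeping $\gamma\|D\|$ negligible against $m(A)$ — a chain of inequalities that only closes because of the specific hypotheses $\|D\|\le\tfrac{\rho_1}{2}m(A_1)$ and $\|B\|\|C\|\le\rho_0\,m(A)\,m(D)$. Getting these to be mutually consistent (rather than circular) is the heart of the matter; everything after the cone inclusion is the routine graph-transform / Hadamard–Perron machinery.
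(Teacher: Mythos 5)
You should first note that the paper does not actually prove this lemma here: it is imported verbatim as Lemma 7.4 of \cite{Nam1}, and the only argumentative content in the present paper is the remark following it, namely that the resulting dominated splitting yields continuity of the invariant sections by Theorem 1.2 of \cite{New}. Your sketch --- an invariant horizontal cone field of width $\gamma$ obtained from the block estimates, domination from the gap between $m(A)$ and $\|D\|$, and then a graph-transform fixed point to produce the invariant plane field, with continuity from uniform contraction --- is the standard argument and is consistent with the route the paper indicates; the only structural difference is that you re-derive continuity by the contraction argument rather than quoting \cite{New}.

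There is, however, one genuine slip in your bookkeeping of the cone width. Your cone-inclusion condition $\|C\|+\gamma\|D\|\le\gamma\bigl(m(A)-\gamma\|B\|\bigr)$ is correct, but it is \emph{not} achieved by taking $\gamma$ ``small enough'': as $\gamma\to0$ the right-hand side tends to $0$ while the left-hand side tends to $\|C\|$, and the stated hypotheses bound only the product $\|B\|\,\|C\|$, not $\|C\|$ itself, so smallness of $\gamma$ alone cannot close the inequality. What the hypotheses actually give is solvability of the quadratic condition $\|B\|\gamma^{2}-\bigl(m(A)-\|D\|\bigr)\gamma+\|C\|\le0$: since $\|D\|\le\tfrac{\rho_1}{2}m(A_1)$ with $m(A_1)\asymp m(A)$ and $\|B\|\,\|C\|\le\rho_0\,m(A)\,m(D)\le\rho_0\,m(A)\,\|D\|$ with $\rho_0$ small, the discriminant $\bigl(m(A)-\|D\|\bigr)^{2}-4\|B\|\,\|C\|$ is positive, so $\gamma$ must be chosen in the resulting window (roughly between $\|C\|/(m(A)-\|D\|)$ and $(m(A)-\|D\|)/\|B\|$), not arbitrarily small; alternatively one may use the specific H\'enon-like structure, where $B=(\di_z\eps,0)^{T}$ and $C=(\di_x\de,\di_y\de)$ are both $O(\bar\eps)$, to justify a fixed small width. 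Either repair is routine, but as written the sentence ``the smallness of $\gamma$ \dots precisely tuned to guarantee'' the inclusion is the one step of your chain that does not follow from the hypotheses as stated (and your identification of $\kappa$ with the gap $m(A)/\|D\|$ is a guess the present paper neither defines nor confirms).
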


\ssk

\nin The tangent bundle $ T_{\Gamma}B $ has the splitting with subbundles $ E^1 \oplus E^2 $ such that

\ssk
\begin{enumerate}
\item $ T_{\Gamma}B = E^1 \oplus E^2 $. \msk
\item Both $ E^1 $ and $ E^2 $ are invariant under $ DF $. \msk
\item $ \| DF^n |_{E^1(x)} \| \|  DF^{-n} |_{E^2(F^{-n}(x))} \| \leq C \mu^n$ for some $ C>0 $ and $ 0 < \mu < 1 $ and $ n \geq 1 $.
\end{enumerate}
\msk
\nin Then it is called that $ T_{\Gamma}B $ has {\em dominated splitting} over the compact invariant set $ \Gamma $. Moreover, dominated splitting implies that invariant sections 
are continuous by Theorem 1.2 in \cite{New}. Then the maps, $ w \mapsto E^i(w) $ for $ i =1,2 $ are continuous.

\bsk

\section{Single invariant surfaces}

The uniform boundedness of the ratio $ \| D \|  \| A^{-1} \| < \frac{1}{\,2} $ in $ DF $ means that
\begin{equation*}
\sup_{w \in B} \frac{\| D_w \|}{m(A_w)} \leq \frac{1}{2}
\end{equation*}
because the linear operator as the derivative is defined for each point $ w \in B $. It implies the dominated splitting of tangent bundle over a given invariant compact set, $ \Gamma $. If dominated splitting over a given compact set $ \Gamma $ satisfies that
\begin{equation*}
\sup_{w \in B} \frac{\| D_w \|}{m(A_w)^r} \leq \frac{1}{2}
\end{equation*}
for $ r \in \N $, then we say that $ F $ has {\em r-dominated splitting} over $ \Gamma $. Moreover, if $ \| D \| $ for $ DF_{\mod} $ is sufficiently smaller than $ b_1 $ for all $ w \in \Gamma $, then contracting or expanding rates, $ m(A) $ and $ \| D \| $ are separated by a uniform constant over the whole $ \Gamma $. It is called {\em pseudo hyperbolicity}. 

\msk
\subsection{Invariant surfaces and two dimensional ambient space}
Dominated splitting over the given invariant compact set, $ \Gamma $ with smooth cut off function implies the pseudo (un)stable manifolds at each point in $ \Gamma $ tangent to an invariant subbundle. 
However, if the dominated splitting satisfies certain conditions, then the whole compact set is contained in a single invariant submanifold of the ambient space (Theorem \ref{Existence of invariant submanifold} below). 
\msk
\begin{defn}
A $ C^r $ submanifold $ Q $ which contains $ \Gamma $ is {\em locally invariant} under $ f $ if there exists a neighborhood $ U $ of $ \Gamma $ in $ Q $ such that $ f(U) \subset Q $. 
\end{defn}

\nin The necessary and sufficient condition for the existence of these submanifolds, see \cite{CP} or \cite{BC}. 
\begin{thm}[\cite{BC}] \label{Existence of invariant submanifold} Let $ \Gamma $ be an invariant compact set with a dominated splitting $ T_{\Gamma}M = E^1 \oplus E^2 $ such that $ E^1 $ is uniformly contracted. Then $ \Gamma $ is contained in a locally invariant submanifold tangent to $ E^2 $ if and only if the strong stable leaves for the bundle $ E^1 $ intersect the set $ \Gamma $ at only one point.
\end{thm}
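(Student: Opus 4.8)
The plan is to establish the two implications through the strong-stable foliation near $\Gamma$ and a graph-transform (fiber-contraction) argument. \emph{The ``only if'' direction is soft.} Uniform contraction of $E^1$ gives, for each $x\in\Gamma$, a strong stable leaf $W^{ss}(x)=\bigcup_{n\ge0}f^{-n}\bigl(W^{ss}_{\mathrm{loc}}(f^nx)\bigr)$ tangent to $E^1(x)$, characterized by $y\in W^{ss}(x)\iff\dist(f^ny,f^nx)\to0$; these are ``the strong stable leaves for $E^1$'' of the statement. Suppose $Q\supseteq\Gamma$ is a locally invariant $C^r$ submanifold with $T_zQ=E^2(z)$ for all $z\in\Gamma$, fix $x\in\Gamma$, and take $y\in W^{ss}(x)\cap\Gamma$. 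Since $Q$ is tangent to $E^2$ along $\Gamma$, it is uniformly transverse near $\Gamma$ to the small strong stable plaques, so there is a neighborhood $U$ of $\Gamma$ in $Q$ meeting each such plaque in at most one point; for $n$ large, $f^nx,f^ny\in\Gamma\subseteq U$ lie on a common small strong stable plaque, whence $f^nx=f^ny$, and, $f$ being a diffeomorphism, $x=y$. So each strong stable leaf meets $\Gamma$ at a single point.

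\emph{The ``if'' direction is the substantive one} (this is the route of \cite{BC}). Extend $E^1$ to a continuous subbundle over a neighborhood of $\Gamma$ and apply the Hirsch--Pugh--Shub plaque-family theorem to this uniformly contracted bundle: one obtains a continuous family of $C^r$ strong stable plaques foliating a neighborhood $V\supseteq\Gamma$ and restricting to the honest local strong stable manifolds at points of $\Gamma$. Let $p\colon V\to N$ be the projection to the local leaf space, a $C^r$ manifold with $\dim N=\dim E^2$. Invariance of the strong stable foliation makes $f$ descend to a local $C^r$ diffeomorphism $\bar f$ of $N$ near $p(\Gamma)$ with $p\circ f=\bar f\circ p$; moreover the domination inequality together with uniform contraction of $E^1$ make $p\colon V\to N$, over $\Gamma$, a fiber-contracting normally hyperbolic bundle, with the $r$-bunched estimate $\|Df|_{E^1}\|\cdot m(Df|_{E^2})^{-r}<1$ for a suitable iterate --- this is exactly what the paper's $r$-domination condition $\sup_w\|D_w\|/m(A_w)^r\le\tfrac12$ furnishes. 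The hypothesis of the theorem says precisely that $p|_\Gamma$ is injective, so $\Gamma$ is the graph of an $\bar f$-equivariant partial section $\sigma_0\colon p(\Gamma)\to V$, the equivariance $f\circ\sigma_0=\sigma_0\circ\bar f$ coming from $f(\Gamma)=\Gamma$. Without injectivity no such section exists, which is why, e.g., a horseshoe carries no such manifold.

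\emph{Then conclude by fiber contraction.} Run the graph transform $\Gamma_f\sigma=f\circ\sigma\circ\bar f^{-1}$ on $C^r$ sections of $p$ over neighborhoods of $p(\Gamma)$ in $N$ that agree with $\sigma_0$ on $p(\Gamma)$ and satisfy a cone condition forcing tangency near $E^2$. Equivariance of $\sigma_0$ makes ``agrees with $\sigma_0$'' invariant under $\Gamma_f$, and the $r$-bunched estimate makes $\Gamma_f$ a $C^r$ fiber contraction, so the $C^r$ section theorem yields a unique fixed point $\sigma_*$ over a neighborhood $W\supseteq p(\Gamma)$; its graph $Q:=\sigma_*(W)$ is a $C^r$ submanifold, locally $f$-invariant (being $\Gamma_f$-invariant), tangent to $E^2$ along $\Gamma$ (cone condition, plus uniqueness of the dominated complement of $E^1$), and containing $\Gamma$ (since $\sigma_*$ extends $\sigma_0$). \emph{The main obstacle} is the quantitative heart of this last step: making the $C^r$ fiber-contraction estimate for $\Gamma_f$ hold uniformly over the compact set $\Gamma$, which forces one to combine uniform contraction of $E^1$, the $r$-bunched domination, a choice of adapted metric, and a careful nesting $W\subseteq p(V')$ so that $\bar f^{-1}$ and the cone of admissible sections are defined on all of $W$. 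Identifying the obstruction as a strong stable leaf hitting $\Gamma$ twice is, by comparison, immediate once $N$ and $p$ are in place.
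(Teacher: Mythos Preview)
The paper does not prove this theorem at all: it is quoted verbatim as a result of Bonatti--Crovisier \cite{BC} and used as a black box, with only the surrounding remarks that the existence is $C^1$-robust and that the result upgrades to $C^r$ under $r$-domination. There is therefore no ``paper's own proof'' to compare your attempt against.

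That said, your sketch is a reasonable outline of the standard route to such a statement and is in the spirit of \cite{BC}: the ``only if'' direction via transversality of $Q$ to the strong stable plaques is straightforward, and for the ``if'' direction you correctly identify the key steps --- quotienting by the strong stable plaque family to a leaf space $N$, using injectivity of $p|_\Gamma$ to get an equivariant partial section, and then running a $C^r$ graph transform with the $r$-bunching estimate to promote this to a locally invariant $C^r$ submanifold. Be aware that several points you gloss over are where the real work in \cite{BC} lies: the leaf space $N$ is a priori only a topological manifold (the plaque family is only continuous in the base point), so one has to be careful about what ``$C^r$ sections of $p$'' means; and making the graph transform contract in $C^r$ uniformly over a neighborhood of $p(\Gamma)$, not just over $\Gamma$ itself, requires the adapted-metric and nesting arguments you allude to but do not carry out. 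None of this is a gap in the sense of a wrong idea, but your write-up is a sketch rather than a proof.
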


\nin Moreover, the existence of invariant submanifold is robust under $ C^1 $ perturbation by \cite{BC}. 
%
Infinitely renormalizable toy model H\'enon-like map with $ b_2 \ll b_1 $ satisfies the sufficient condition for the existence of locally invariant single surfaces by Lemma \ref{transversal intersection at a single point}. 
By $ C^1 $ robustness, the ambient space of toy model maps and its sufficiently small perturbation can be reduced to a single invariant surface.

\begin{rem}
Theorem \ref{Existence of invariant submanifold} is extended to the existence of $ C^r $ invariant submanifold with $ r- $dominated splitting. Moreover, the given invariant compact set can be extended to the maximal one. 
\end{rem}

\begin{lem} \label{invariant surface containing Per and Cantor set}
Let $ F_{\mod} $ be a toy model map in $ \II(\bar \eps) $. Suppose that $ b_2 \ll b_1 $ where $ b_1 $ is the average Jacobian of $ \pi_{xy} \circ F_{\mod} $. Then $ \overline{\Per}_{F_{\mod}} $ has the dominated splitting in Lemma \ref{Invariance of cone filed perturbation infty}. Moreover, there exists a locally invariant $ C^1 $ single surface $ Q $ which contains $ \overline{\Per}_{F_{\mod}} $ and $ Q $ meets transversally and uniquely strong stable manifold, $ W^{ss}(w) $ at each $ w \in \overline{\Per}_{F_{\mod}} $. 
\end{lem}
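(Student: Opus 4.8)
The plan is to verify the hypotheses of Lemma \ref{Invariance of cone filed perturbation infty} (and its $r$-dominated refinement) for $F_{\mod}$ on the compact invariant set $\Gamma = \overline{\Per}_{F_{\mod}}$, then invoke Theorem \ref{Existence of invariant submanifold} together with Lemma \ref{transversal intersection at a single point}. First I would record that since $F_{\mod}$ is a toy model map, its derivative has the block form on the right of \eqref{matrix symbolic expression of DF infty}: the off-diagonal block $B \equiv \mathbf 0$, so the tangent bundle splits \emph{a priori} into the $z$-direction subbundle (contracted at rate $\asymp \|D\| = \di_z\de$) and a complement projecting isomorphically onto the $xy$-plane, on which $DF_{\mod}$ acts essentially as $DF_{2d}$. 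The key quantitative input is the estimate, via Lemma \ref{estimation of 2d min-norm} (the commented-out lemma giving $m(DF_{2d}^{2^n}) \asymp \si^n b_1^{2^n}$) and Proposition \ref{renormalization of toy model} (giving $\di_z\de_n \asymp b_2^{2^n}$), that along the orbit of any periodic point the strongest contraction of the $xy$-block of $DF_{\mod}^{2^n}$ is of order $b_1^{2^n}$ up to sub-exponential factors, while the $z$-contraction is of order $b_2^{2^n}$. Under the standing assumption $b_2 \ll b_1$, the ratio $\|D\|/m(A_1)$ along periodic orbits is dominated by a fixed power of $b_2/b_1$, which can be made $< \rho_1/2$; because $B = \mathbf 0$ the mixed condition $\|B\|\|C\| \le \rho_0\, m(A)\, m(D)$ is automatic. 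This gives the dominated splitting of $\overline{\Per}_{F_{\mod}}$, and in fact the stronger $r$-dominated/pseudo-hyperbolic version since $b_2 \ll b_1$ forces $\|D\| \ll b_1$ uniformly.

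Next I would identify the two subbundles concretely: $E^1$ is the strong stable (vertical, $z$) direction, uniformly contracted as just shown, and $E^2$ is the dominated complement tangent to the ``horizontal'' directions, so that the hypothesis ``$E^1$ is uniformly contracted'' in Theorem \ref{Existence of invariant submanifold} holds. To apply that theorem I then need the strong stable leaves $W^{ss}(w)$ of the bundle $E^1$ to meet $\Gamma = \overline{\Per}_{F_{\mod}}$ in exactly one point each. This is precisely what Lemma \ref{transversal intersection at a single point} supplies (two distinct periodic points, or limits thereof, cannot lie on a common vertical strong stable leaf because their $xy$-projections are distinct periodic points of $F_{2d}$ and the whole set $\overline{\Per}$ has uniformly bounded slope transverse to the vertical, while the strong stable manifolds are nearly vertical and pairwise disjoint). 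Feeding this into Theorem \ref{Existence of invariant submanifold} produces a locally invariant submanifold $Q$ tangent to $E^2$ containing $\overline{\Per}_{F_{\mod}}$; since $E^1$ is merely Lipschitz a priori one gets $C^1$ (and using the $r$-dominated version and the Remark, $C^r$), and transversality of $Q$ to $W^{ss}(w)$ at each $w$ is immediate from $T_wQ = E^2(w)$ and $T_wW^{ss}(w) = E^1(w)$ with $E^1\oplus E^2 = T_wB$.

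The main obstacle I expect is the quantitative estimate controlling $m(A_1)$ — i.e.\ justifying that the strongest contraction of the two-dimensional block $DF_{2d}^{2^n}$ along a periodic orbit is $\asymp \si^n b_1^{2^n}$ rather than something much smaller. This requires the Distortion Lemma \ref{Distortion lemma} together with the decomposition of $DF_{2d}^{2^n}$ through the coordinate change maps $\Psi^n_{k}$ and the explicit form of $D\Psi^n_{0,2d}$ at the tip (so that the $O(\si^{2n})$ horizontal and $O(\si^n)$ vertical scalings are tracked correctly), plus the fact that $m(\,\cdot\,)$ of a product is controlled below by the product of the $m(\,\cdot\,)$'s only after accounting for the near-triangular structure. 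Once that estimate is in hand, the comparison $\|D\|/m(A_1) \lesssim (b_2/b_1)^{2^n}\si^{-n} \to 0$ under $b_2 \ll b_1$ is routine, and the passage from periodic orbits to their closure $\overline{\Per}_{F_{\mod}}$ is by continuity of $DF_{\mod}$ and compactness. I would also remark that the $C^1$ robustness statement in \cite{BC} is what later lets this surface persist for small perturbations of $F_{\mod}$, though that is not needed for the present lemma.
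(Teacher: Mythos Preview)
Your overall strategy---verify the hypotheses of Lemma~\ref{Invariance of cone filed perturbation infty}, feed Lemma~\ref{transversal intersection at a single point} into Theorem~\ref{Existence of invariant submanifold} to produce $Q$, and read off transversality from $T_wQ = E^2(w)$---matches the paper's for existence of $Q$ and dominated splitting. Two remarks are in order.

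First, on the dominated splitting: you work much harder than the paper does. The paper simply observes that, because the toy model has $B\equiv\mathbf 0$ in \eqref{matrix symbolic expression of DF infty}, the vector $(0\ 0\ 1)$ is a genuine eigenvector of $DF_{\mod}$ at \emph{every} point with eigenvalue $\di_z\de$, so the splitting $E^{ss}\oplus E^{pu}$ is present and $DF_{\mod}$-invariant from the outset; domination then reduces to the rate comparison $\|D\|\ll m(A_1)$, which the paper attributes directly to $b_2\ll b_1$ without passing through the iterate estimate $m(DF_{2d}^{2^n})\asymp\si^n b_1^{2^n}$. Your iterate-based route is more careful about what ``$b_2\ll b_1$'' actually buys (and you correctly flag $m(A_1)$ as the delicate point), but it is not the paper's argument.

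Second, and this is a genuine gap: you have not proved the \emph{uniqueness} clause in the statement, namely that $Q\cap W^{ss}(w)=\{w\}$. You verify $\Gamma\cap W^{ss}(w)=\{w\}$ (that is Lemma~\ref{transversal intersection at a single point}, and it is the hypothesis for Theorem~\ref{Existence of invariant submanifold}), and you verify transversality at $w$; but nothing you wrote rules out a second intersection point $w'\in Q\cap W^{ss}(w)$ lying \emph{outside} $\overline{\Per}_{F_{\mod}}$. The paper closes this with an Inclination Lemma argument: if such a $w'$ existed, then by Lemma~\ref{transversal intersection at a single point} one has $w'\notin\overline{\Per}_{F_{\mod}}$; taking a small neighborhood $U$ of $w'$ in $Q$ and iterating forward, $U$ is pushed along the strong stable leaf toward the orbit of $w$ and (by the $\lambda$-lemma) limits onto a neighborhood of $F^n(w)$ inside $Q$ itself. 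This forces $Q$ to accumulate on itself, contradicting that the surface produced by Theorem~\ref{Existence of invariant submanifold} is an embedded submanifold. You should add this step; without it the ``uniquely'' in the conclusion is unproved.
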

\begin{proof}
One of the eigenvalues of $ DF_{\mod} $ at each point is asymptotically $ b_2 $ with the eigenvector $ (0\ 0 \ 1 ) $ by straightforward calculation. 
Thus dominated splitting exists with the condition $ b_2 \ll b_1 $ over any invariant compact set, in particular, $ \overline{\Per}_{F_{\mod}} $. Each cone of the vector $ (0\ 0 \ 1 ) $ at all points is disjoint from the invariant plane field, say $ E^{pu} $ - tangent subbundle with pseudo unstable direction. Thus any invariant surface, $ Q $ tangent to $ E^{pu} $ over $ \overline{\Per}_{F_{\mod}} $ meets transversally the strong stable manifold. Let us show the uniqueness of intersection point. Suppose that $ w $ and $ w' $ are intersection points between $ Q $ and $ W^{ss}(w) $. If $ w' \neq w $, then $ w' \notin \overline{\Per}_{\mod} $ by Lemma \ref{transversal intersection at a single point}. Take a small neighborhood $ U $ of $ w' $ in the invariant surface $ Q $. Then $ U $ converges to the neighborhood of $ F^n(w) $ in $ Q $ as $ n \ra \infty $ by Inclination Lemma. Thus $ Q $ cannot be a submanifold of the ambient space because it accumulates itself. It contradicts to Theorem \ref{Existence of invariant submanifold}. Hence, $ w $ is the unique intersection point. 
\end{proof}
\nin Recall that three dimensional H\'enon-like map in $ \II(\bar \eps) $ is sectionally dissipative at each periodic points. Thus the invariant plane field over $ \overline{\Per}_{F_{\mod}} $ contains the unstable direction of each periodic point. Then $ Q $ contains the set
$$ \AAA \equiv \OO \cup \bigcup_{n \geq 1} W^u( \Orb (q_n)) $$
where each $ q_n $ is a periodic point whose period is $ 2^n $ for $ n \in \N $. $ \AAA $ is called the {\em global attracting set}.

\msk
\subsection{Invariant surfaces containing $ \overline{\Per} $ as the graph of $ C^r $ map}
Let $ F_{\mod} $ be the 
H\'enon-like toy model map in $ \II(\bar \eps) $. Let $ b_1 $ be the average Jacobian of $ F_{2d} \equiv \pi_{xy} \circ F_{\mod} $ and assume that $ b_2 \ll b_1 $. 
The set of lines perpendicular to $ xy- $plane
\begin{equation} \label{eq-invariant perpendicular lines}
\begin{aligned}
\bigcup_{(x,\,y) \in\,\pi_{xy}(B)} \{ (x,\, y,\, z ) \,|\; z \in { I}^z \,\}
\end{aligned} 
\end{equation}
is invariant under $ F_{\mod} $. Thus the invariant section, $ w \mapsto E^{ss}(w) $  is constant. The above set, \eqref{eq-invariant perpendicular lines} contains the strong stable manifold over $ \Gamma $. The angle between each tangent spaces $ E^{ss}_w $ and $ E^{pu}_w $ is (uniformly) positive. Thus the maximal angle between $ E^{pu} $ and $ T\R^2 $ is less than $ \frac{\pi}{2} $. 
\begin{rem}
If \,$ T_{\Gamma}B = E^{ss} \oplus E^{pu} $ \,is $ r- $dominated splitting, then $ Q $ which is invariant single surface tangent to $ E^{pu} $ is a $ C^r $ surface. Moreover, since the strong stable manifolds at each point is the set of perpendicular lines to $ xy- $plane, $ Q $ is the graph of $ C^r $ function from a region in $ I^x \times I^y $ to $ I^z $. 
\end{rem}
\nin Let $ F_{\mod} \in \II(\bar \eps) $ with $ b_2 \ll b_1 $. Then by above Lemma \ref{invariant surface containing Per and Cantor set}, we may assume invariant surfaces tangent to the invariant plane field has the neighborhood, say also $ Q $, of the tip, $ \tau_{F_{\mod}} $ in the given invariant single surface which satisfies the following properties. 
\ssk
\begin{enumerate}
\item $ Q $ is contractible. \ssk
\item $ Q $ contains $ \tau_{F_{\mod}} $ in its interior and is locally invariant under $ F^{2^N} $ for big enough $ N \in \N $. \ssk
\item Topological closure of $ Q $ is the graph of $ C^r $ map from a neighborhood of $ \tau \big( \pi_{xy} \circ F_{\mod} \big) $ in $ xy- $plane to $ I^z $. \ssk 
\end{enumerate}
By $ C^1 $ robustness of the existence of single invariant surfaces, let $ F $ be a {\em sufficiently small perturbation} of $ F_{\mod} $ such that there exist invariant surfaces each of which is the graph of $ C^r $ map from a region in the $ xy- $plane to $ I^z $. 
\ssk \\
%
%
%
\begin{prop} \label{invariant surfaces on each deep level}
Let $ F \in \II(\bar \eps) $. Suppose that there exists an invariant surface under $ F $, say $ Q $ which is the graph of $ C^r $ function, $ \xi $ on $ \pi_{xy}(B^n_{\tip}) $ such that $ \| D\xi \| \leq C_0 $ for some $ C_0 >0 $. Then $ Q_n \equiv \big(\Psi^n_{\tip}\big)^{-1}(Q) $ 
is the graph of a $ C^r $ function $ \xi_n $ on $ \pi_{xy}\big(B(R^nF)\big) $ such that 
$$ \xi_n(x,y) = c_0y(1 + O(\si^n)) $$
for some constant $ c_0 $. 
\end{prop}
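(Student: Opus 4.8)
\emph{Proof proposal.} The plan is to pull the graph $Q=\{z=\xi(X,Y)\}$ back through the explicit normal form \eqref{eq-coordinate change map 3D} of $\Psi^n_{\tip}=\Psi^n_0$, to solve the resulting implicit equation for $z$ by a contraction argument — this yields $\xi_n$ and its $C^r$ regularity — and finally to extract the stated asymptotics from a first order Taylor expansion of $\xi$ at the tip together with the size estimates for the factors of \eqref{eq-coordinate change map 3D}. Writing $\Psi^n_{\tip}(x,y,z)=(X,Y,Z)$, formula \eqref{eq-coordinate change map 3D} gives
\begin{align*}
X&=\alpha_{n,0}\big(x+S^n_0(x,y,z)\big)+t_{n,0}\si_{n,0}\,y+u_{n,0}\si_{n,0}\big(z+R^n_0(y)\big),\\
Y&=\si_{n,0}\,y,\qquad\qquad Z=d_{n,0}\si_{n,0}\,y+\si_{n,0}\big(z+R^n_0(y)\big).
\end{align*}
Since $\Psi^n_{\tip}$ is a diffeomorphism of $B(R^nF)$ onto $B^n_{\tip}$, a point $(x,y,z)$ lies in $Q_n$ precisely when $Z=\xi(X,Y)$; dividing by $\si_{n,0}$ this reads
$$z=\Phi_n(x,y,z):=-d_{n,0}\,y-R^n_0(y)+\tfrac{1}{\si_{n,0}}\,\xi\big(X(x,y,z),\si_{n,0}y\big),$$
where $\Phi_n$ is $C^r$ (as $F$, hence $\Psi^n_{\tip},S^n_0,R^n_0$, is analytic and $\xi$ is $C^r$) and $\xi(0,0)=0$ because $Q$ contains the tip, which is the origin in the coordinates of \eqref{eq-coordinate change map 3D}. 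For $n$ large $\pi_{xy}(B^n_{\tip})$ lies inside the domain of $\xi$ since $\diam B^n_{\tip}\to 0$.

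Next I would solve for $z$. One computes $\partial_z\Phi_n=\partial_X\xi\cdot\big(u_{n,0}+\tfrac{\alpha_{n,0}}{\si_{n,0}}\,\partial_z S^n_0\big)$, and since $|u_{n,0}|\le C\bar\eps$ by \eqref{eq-estimation of t,u,d}, $\alpha_{n,0}/\si_{n,0}=O(\si^n)$ from the size formulas in \eqref{eq-coordinate change map 3D}, $\partial_z S^n_0$ is bounded by Lemma~\ref{Asymptotic non-linear part}, and $\|D\xi\|\le C_0$, we get $\|\partial_z\Phi_n\|<1$ once $\bar\eps$ is small relative to $C_0$; the bound \eqref{eq-diameter of box Bn} keeps $\Phi_n(x,y,\cdot)$ valued in $I^z$. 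Hence $\Phi_n(x,y,\cdot)$ is a contraction with a unique fixed point $z=\xi_n(x,y)$, and the implicit function theorem makes $\xi_n$ a $C^r$ function on $\pi_{xy}(B(R^nF))$ whose graph is $Q_n$.

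For the asymptotics, the box and normal-form estimates give $|X|,|Y|=O(\si^n)$ on $B(R^nF)$ (the $x$-dependent part of $X$ being only $O(\si^{2n})$, since $\alpha_{n,0}=O(\si^{2n})$ and $x+S^n_0=O(1)$). As $\xi\in C^r$ with $r\ge2$, Taylor expansion at the origin yields $\xi(X,Y)=\partial_X\xi(0,0)X+\partial_Y\xi(0,0)Y+O(\si^{2n})$. Dividing by $\si_{n,0}$, and using $\|R^n_0\|_{C^1}\le C\si^n$ (so $R^n_0(y)=O(\si^n)$ and $u_{n,0}R^n_0(y)=O(\si^n)$) together with $\tfrac{\alpha_{n,0}}{\si_{n,0}}(x+S^n_0)=O(\si^n)$, one obtains, with $z=\xi_n(x,y)$,
$$\big(1-\partial_X\xi(0,0)\,u_{n,0}\big)\,\xi_n(x,y)=\big(-d_{n,0}+\partial_X\xi(0,0)\,t_{n,0}+\partial_Y\xi(0,0)\big)\,y+O(\si^n).$$
Since $|\partial_X\xi(0,0)u_{n,0}|=O(\bar\eps)<1$, this exhibits $\xi_n(x,y)$ as $c_{0,n}\,y+O(\si^n)$ with $c_{0,n}$ an explicit ratio of $t_{n,0},u_{n,0},d_{n,0}$ and $\partial_X\xi(0,0),\partial_Y\xi(0,0)$; because $t_{n,0},u_{n,0},d_{n,0}$ converge super-exponentially, $c_{0,n}=c_0+O(\si^n)$ for a constant $c_0$, so $\xi_n(x,y)=c_0\,y+O(\si^n)=c_0\,y(1+O(\si^n))$.

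The delicate point is keeping every error genuinely of order $\si^n$ while the contraction and the Taylor expansion interact: the unknown $z$ enters $X$ (and $S^n_0$) with coefficients that are small but only $O(\bar\eps)$, hence not negligible at the level of the asymptotics, so that dependence must be carried symbolically and absorbed into $c_0$ through the final linear solve rather than dropped. Making this quantitative rests on assembling the normal-form estimates $\alpha_{n,0}=O(\si^{2n})$, $\si_{n,0}=O(\si^n)$, \eqref{eq-estimation of t,u,d}, \eqref{eq-C1 convergence of R-n-k}, \eqref{eq-diameter of box Bn} with the $C^2$ bound on $\xi$ that controls the Taylor remainder.
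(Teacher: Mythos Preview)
Your proposal is correct and follows essentially the same route as the paper: write out $\Psi^n_0$ in the normal form \eqref{eq-coordinate change map 3D}, solve the resulting implicit equation $Z=\xi(X,Y)$ for $z$ (the paper invokes the implicit function theorem on $G_n=\xi-\si_{n,0}z-\cdots$, you use the equivalent contraction $z=\Phi_n$), and then read off the asymptotics from the size estimates on $\alpha_{n,0},\si_{n,0},t_{n,0},u_{n,0},d_{n,0},R^n_0$. The only cosmetic difference is that the paper extracts the asymptotics by differentiating the implicit relation to get formulas for $\partial_x\xi_n,\partial_y\xi_n$ and then lets the argument of $\partial_x\xi,\partial_y\xi$ shrink to the tip, whereas you Taylor-expand $\xi$ at the tip directly; both yield the same constant $c_0$ (your denominator $1-\partial_X\xi(0,0)\,u_{*,0}$ is in fact what the paper's derivation produces as well).
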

\begin{proof}
The $ n^{th}$ renormalization of $ F $, $ R^nF $ is $ \big(\Psi^n_{\tip}\big)^{-1} \circ F^{2^n} \circ \Psi^n_{\tip} $. Thus $ Q_n \equiv \big(\Psi^n_{\tip}\big)^{-1}(Q) $ is an invariant surface under $ R^nF $. Let us choose a point 
$ w' = (x',y',z') \in Q \cap B^n_0 $ where $ B_{\tip}^n \equiv \Psi^n_{\tip}(B(R^nF)) $ and $ z' = \xi(x',y') $. Thus 
\begin{align*}
\textrm{graph}(\xi) = (x',\,y',\,\xi(x',y')) =& \ (x',\,y',\,z') .
\end{align*}
Moreover, let $ \big(\Psi^n_{\tip}\big)^{-1}(x',\,y',\,z') = (x,\,y,\,z) \in Q_n $. Thus by the equation \eqref{eq-coordinate change map 3D}, each coordinates of $ \Psi^n_0 \equiv \Psi^n_{\tip}(w-\tau_n) - \tau_F $ as follows
\ssk
\begin{align} 
x' =& \ \alpha_{n,\,0} ( x + S^n_0(w)) +  \si_{n,\,0}\,t_{n,\,0}\cdot y +\si_{n,\,0}\, u_{n,\,0} \,(z + R^n_0(y)) \label{image under the conjugation map xi for x} \\
y' =& \ \si_{n,\,0}\cdot y \label{image under the conjugation map xi for y} \\
z' =& \ \si_{n,\,0}\, d_{n,\,0}\cdot y + \si_{n,\,0}\,(z + R^n_0(y)) \label{image under the conjugation map xi for z} 
\end{align}
where $ w' = (x',\,y',\,z') $. Firstly, let us show that $ Q_n $ is the graph of a well defined function $ \xi_n $ from $ \pi_{xy}(B(R^nF)) $ to $ \pi_z(B(R^nF)) $, that is, $ z = \xi_n(x,\,y) $. 
By the equations \eqref{image under the conjugation map xi for y} and \eqref{image under the conjugation map xi for z}, we see that 
\begin{equation} \label{implicit expression of z}
\begin{aligned} 
\si_{n,\,0}\cdot z = &\ z' - \si_{n,\,0}\,d_{n,\,0}\cdot y - \si_{n,\,0}\, R^n_0(y) \\[0.2em] 
=& \ \xi(x',\,y') - \si_{n,\,0}\,d_{n,\,0}\cdot y - \si_{n,\,0}\, R^n_0(y) \\[0.3em]
=& \ \xi \big(\alpha_{n,\,0} ( x + S^n_0(w)) +  \si_{n,\,0}\,t_{n,\,0}\cdot y + \si_{n,\,0}\,u_{n,\,0}\, (z + R^n_0(y)),\ \si_{n,\,0}\cdot y \big) \\
& \quad - \si_{n,\,0}\,d_{n,\,0}\cdot y - \si_{n,\,0}\, R^n_0(y) .
\end{aligned} \ssk
\end{equation}
Define a function as below \ssk
\begin{equation*}
\begin{aligned}
G_n(x,y,z) =& \ \xi \big(\alpha_{n,\,0} ( x + S^n_0(w)) + \si_{n,\,0} \,t_{n,\,0}\cdot y + \si_{n,\,0}\,u_{n,\,0}\, (z + R^n_0(y)),\ \si_{n,\,0}\cdot y \big) \\
& \quad - \si_{n,\,0}\,d_{n,\,0}\cdot y - \si_{n,\,0}\, R^n_0(y) - \si_{n,\,0}\cdot z .
\end{aligned} \ssk
\end{equation*}
Then the partial derivative of $ G_n $ over $ z $ is as follows \ssk
\begin{equation*}
\begin{aligned}
\di_z G_n(x,y,z) =& \ \di_x \xi \circ \big(\alpha_{n,\,0} ( x + S^n_0(w)) + \si_{n,\,0} \,t_{n,\,0}\cdot y + \si_{n,\,0}\, u_{n,\,0}\,(z + R^n_0(y)),\ \si_{n,\,0}\cdot y \big) \\
& \quad \cdot \big[\, \alpha_{n,\,0} \cdot \di_z S^n_0(w) + \si_{n,\,0}\, u_{n,\,0}\,\big] - \si_{n,\,0}  .
\end{aligned} \msk
\end{equation*}
\nin Recall that $ \alpha_{n,\,0} = \si^{2n}(1 + O(\rho^n)) $, $ \si_{n,\,0} = (-\si)^n (1 + O(\rho^n)) $, $ \| \:\! \di_z S^n_0 \| = O\big(\bar \eps \big) $ and $ | \:\! u_{n,\,0} | = O\big(\bar \eps \big) $. Then
\begin{equation*}
\begin{aligned}
\| \:\! \di_z G_n \| \geq \big[ - \| \:\!\di_x \xi \| \big[\, \si^{2n}C_0\, \bar \eps + \si^n C_1 \;\! \bar \eps \,\big] + \si^n \;\big]\,(1 + O(\rho^n)) 
\end{aligned} \ssk
\end{equation*}
for some positive $ C_0 $ and $ C_1 $. Since $ \| D\xi \| \leq C_0 $ for some $ C_0>0 $, $ \| \:\! \di_z G_n \| $ is away from zero uniformly for small enough $ \bar \eps > 0 $. By implicit function theorem, $ z = \xi_n(x,y) $ is a $ C^r $ function locally on a neighborhood of at every point $ (x,y) \in \pi_{xy}(B(R^nF)) $. Furthermore, since $ Q_n $ is contractible, $ \xi_n(x,y) $ is defined globally by $ C^r $ continuation of the coordinate charts.
\ssk \\
By the equations \eqref{image under the conjugation map xi for y} and \eqref{image under the conjugation map xi for z} with chain rule, we obtain the following equations 
\begin{equation*}
\begin{aligned}
\di_x \xi \cdot \frac{\di x'}{\di x} = & \ \si_{n,\,0} \cdot \di_x \xi_n \\[0.3em]
\di_x \xi \cdot \frac{\di x'}{\di y} + \di_y \xi \cdot \si_{n,\,0} = & \ \si_{n,\,0}\,d_{n,\,0}\:\!  + \si_{n,\,0} \cdot \di_y \xi_n + \si_{n,\,0} \cdot (R^n_0)'(y) .
\end{aligned} 
\end{equation*}
%
\nin Each partial derivatives of $ \xi_n $ as follows by the equation \eqref{image under the conjugation map xi for x}, \ssk
\begin{equation} \label{exponential convergence of the invariant surface xi}
\begin{aligned}
\frac{\di \xi_n}{\di x} = & \ \frac{1}{\si_{n,\,0}} \cdot \di_x \xi \cdot \left[ \alpha_{n,\,0} \big( 1+ \di_{x}S^n_0(w) \big) + \si_{n,\,0}\,u_{n,\,0}\cdot \frac{\di \xi_n}{\di x} \right] \\[0.3em]
\frac{\di \xi_n}{\di y} = & \ \frac{1}{\si_{n,\,0}} \cdot \di_x \xi \cdot \left[ \alpha_{n,\,0}\: \di_{y}S^n_0(w) + \si_{n,\,0}\,t_{n,\,0} + \si_{n,\,0}\,u_{n,\,0} \Big(\: \frac{\di \xi_n}{\di y} + (R^n_0)'( y ) \Big) \right] \\
\qquad & + \di_y \xi - d_{n,\,0} - (R^n_0)'\left( y \right) .
\end{aligned}
\end{equation}
Recall the facts that $ \si_{n,\,0} \asymp (-\si)^n $, $ \alpha_{n,\,0} \asymp \si^{2n} $ for each $ n \in N $
Thus
$$ \Big\| \frac{\di \xi_n}{\di x} \Big\| \leq  \| \di_x \xi \| \, C_0 \:\! \si^n \leq C\:\! \si^n $$
for some $ C>0 $. Recall also that $ \| \di_{y}S^n_0 \| \leq C_3\,\bar \eps $ for some $ C_3 > 0 $ by Lemma \ref{Asymptotic non-linear part}. Each constants $ t_{n,\,0} $, $ u_{n,\,0} $ and $ d_{n,\,0} $ converge to the numbers $ t_{*,0} $, $ u_{*,0} $, and $ d_{*,0} $ respectively super exponentially fast. 
\ssk \\
In the above equation \eqref{exponential convergence of the invariant surface xi}, each partial derivatives $ \di_x \xi $ and $ \di_y \xi $ converges to the origin as $ n \ra \infty $ because all points in the domain of $ \xi $ are in $ B^n_0 \equiv \Psi^n_0(B(R^nF)) $ and $ \diam(B^n_0) \leq C\si^n $. Thus both derivatives $ \di_x \xi(x,y) $ and $ \di_y \xi(x,y) $ converges to $ \di_x \xi(\tau_F) $ and $ \di_y \xi(\tau_F) $ as $ n \ra \infty $ respectively. However, the quadratic or higher order terms of $ \frac{\di \xi_n}{\di y} $ converges to zero exponentially fast by the equation \eqref{eq-C1 convergence of R-n-k}, that is, $ \| R^n_k \|_{C^1} \leq C \si^n $. Hence, we obtain that 
$$ \xi_n(x,y) = c_0y(1 + O(\si^n)) $$  
where $ c_0 = \dfrac{\di_x \xi(\tau_F) \cdot t_{*,0} + \di_y \xi(\tau_F) - d_{*,0}}{1-u_{*,0}} $. 
\end{proof}

\bsk

\section{Universality of conjugated two dimensional H\'enon-like map}
Let $F \in \II(\bar \eps) $ be a sufficiently small perturbation of the given model map $ F_{\mod} \in \II(\bar \eps) $. Let $ Q_n $ and $ Q_k $ be invariant surfaces under $ R^nF $ and $ R^kF $ respectively for $ k < n $. Then by Lemma \ref{invariant surfaces on each deep level}, $ \Psi^n_k $ is the coordinate change map between $ R^kF^{2^{n-k}} $ and $ R^nF $ from level $ n $ to $ k $ such that $ \Psi^n_k(Q_n) \subset Q_k $. Let us define $ C^r $ two dimensional H\'enon-like map $ _{2d}F_{n,\,\xi} $ on level $ n $ \ssk as follows
\begin{align} \label{Cr Henon map with invariant surface}
\,_{2d}F_{n,\, \xi} \equiv \pi_{xy}^{\xi_n} \circ R^nF|_{\,Q_n} \circ (\pi_{xy}^{\xi_n})^{-1} 
\end{align}
\nin where the map $ (\pi_{xy}^{\xi_n})^{-1} : (x,y) \mapsto (x,y, \xi_n(x,y)) $ is a $ C^r $ diffeomorphism on the domain of two dimensional map, $ \pi_{xy}(B) $. In particular, the map $ F_{2d,\,\xi} $ is defined as follows
\begin{equation} \label{Cr Henon map with invariant surface 0}
F_{2d,\:\xi} (x,\,y) = (f(x) - \eps(x,y, \xi),\ x )
\end{equation}
where $ \textrm{graph} (\xi) $ is a $ C^r $ invariant surface under the three dimensional map $ F \colon (x,\, y,\, z) \mapsto (f(x) - \eps(x,y,z),\ x,\ \de(x,y,z) ) $. 

\msk
\subsection{Renormalization of conjugated maps}
Let us assume that $ 2 \leq r < \infty $. By Lemma \ref{invariant surfaces on each deep level}, the invariant surfaces, $ Q_n $ and $ Q_k $ are the graphs of $ C^r $ maps $ \xi_n(x,y) $ and $ \xi_k(x,y) $ respectively. 
The map $ {}_{2d}^{}\Psi^n_{k,\, \xi,\, \tip} $ is defined as the map satisfying the following commutative diagram
\begin{displaymath}
\xymatrix @C=1.8cm @R=1.8cm
 {
(Q_n, \tau_n)  \ar[d]_*+ {\pi_{xy,\, n}^{\xi_n}}  \ar[r]^*+{\Psi^n_{k,{\bf v}, \tip}}    & (Q_k, \tau_k) \ar[d]^*+{\pi_{xy,\, k}^{\xi_k}}   \\
 (_{2d}B_n, \tau_{2d,\,n}) \ar[r]^*+{ {}_{2d}^{}\Psi^n_{k,\, \xi,\, \tip}}      & (_{2d}B_k, \tau_{2d,\,k})
   }
\end{displaymath}
\\ 
where $ Q_n $ and $ Q_k $ are invariant $ C^r $ surfaces with $ 2 \leq r < \infty $ of $R^nF$ and $R^kF$ respectively and $ \pi_{xy,\, n}^{\xi_n} $ and $ \pi_{xy,\, k}^{\xi_k} $ are the inverses of graph maps, $ (x,y) \mapsto (x,y, \xi_n) $ and $ (x,y) \mapsto (x,y, \xi_k) $ respectively. 
\ssk \\
Using translations\, $ T_k : w \mapsto w-\tau_k $\, and\, $ T_n : w \mapsto w-\tau_n $,\, we can let the tip move to the origin as the fixed point of new coordinate change map, $ \Psi^n_k \equiv T_k \circ \Psi^n_{k,\,\tip} \circ T_n^{-1} $. Thus due to the above commutative diagram, corresponding tips in $ _{2d}B_j $ for $ j=k,n $ is changed to the origin. Let $ \pi_{xy} \circ T_j $ be $ T_{2d,\,j} $ for $ j=k,n $. This origin is also the fixed point of the map $ {}_{2d}^{}\Psi^n_{k,\, \xi} := T_{2d,\, k} \circ {}_{2d}^{}\Psi^n_{k,\, \xi,\,\tip} \circ T_{2d,\, n}^{-1} $ where $ T_{2d,\, j} = \pi_{xy,\, j} \circ T_j $ with $ j = k, n $. 
By straightforward calculation, we obtain the expression of $ {}_{2d}^{}\Psi^n_{k,\, \xi} $ as follows
\ssk
\begin{align*}
{}_{2d}^{}\Psi^n_{k,\, \xi} &= \pi_{xy,\, k}^{\xi_k} \circ \Psi^n_k(x,y, \xi_n) \\
& = \pi_{xy,\, k}^{\xi_k} \circ
\left ( \begin{array} {c r r}
\alpha_{n,\,k}   & \si_{n,\,k}\, t_{n,\,k}  &  \si_{n,\,k}\, u_{n,\,k}  \\
                 & \si_{n,\,k}              &                   \\
                 &  \si_{n,\,k}d_{n,\,k}\,  &  \si_{n,\,k}
\end{array} \right )
\left ( \begin{array} {c}
x+ S^n_{k,\, \xi} \\
y \\
\xi_n +R^n_k(y)
\end{array} \right )  \\[0.3em] \numberthis \label{coordinate change map of xi-2d}
& = \left( \alpha_{n,\,k} (x + S^n_{k,\, \xi} ) + \si_{n,\,k} \, t_{n,\,k}\; y +  \si_{n,\,k}\, u_{n,\,k} (\xi_n +R^n_k(y)),\, \si_{n,\,k}\, y \right) \\[-1em]
\end{align*} 
where $ S^n_{k,\, \xi} = S^n_k (x,y, \xi_n(x,y)) $. Then \ssk
\begin{align*} 
\Jac {}_{2d}^{}\Psi^n_{k,\, \xi} = \ & \det \left( \begin{array} {c c}
\alpha_{n,\,k} (1+ \di_x S^n_{k,\, \xi} + \di_z S^n_{k,\, \xi} \cdot \di_x \xi_n ) +  \si_{n,\,k}\, u_{n,\,k}\, \di_x \xi_n & \bullet \\[0.3em]
0 & \si_{n,\,k} 
\end{array} \right) \\[0.6em] \numberthis \label{eq-Jacobian of scope map of xi}
= \ & \si_{n,\,k} \left( \alpha_{n,\,k} (1+ \di_x S^n_{k,\, \xi} + \di_z S^n_{k,\, \xi} \cdot \di_x \xi_n ) +   \si_{n,\,k}\,u_{n,\,k}\, \di_x \xi_n  \right) . \\[-1em]
\end{align*} 
%
If $ F \in \II(\bar \eps) $ has invariant surfaces as the graph of $ C^r $ maps defined on $ I^x \times I^y $ at every level, then $ _{2d}\Psi^{k+1}_{k,\, \xi} $ is the conjugation between 
$ (_{2d}F_{k,\, \xi})^2 $ and $ _{2d}F_{k+1,\, \xi} $ for each $ k \in \N $. Then two dimensional map $ F_{2d, \, \xi} $ is called {\em formally} infinitely {\em renormalizable map} with $ C^r $ conjugation. Moreover, the map defined on the equation \eqref{coordinate change map of xi-2d} with $ n =k+1 $, $ _{2d}\Psi^{k+1}_{k,\, \xi} $ 
is the inverse of the horizontal map 
$$ (x,y) \mapsto (f_k(x) - \eps_k(x,y, \xi_k),\; y) \circ (\si_k x,\ \si_k y)$$
by Proposition \ref{2d scaling map of Cr conjugation} below. 
\msk
\begin{prop} \label{2d scaling map of Cr conjugation}
Let the coordinate change map between $ (_{2d}F_{k,\, \xi})^2 $ and $ _{2d}F_{k+1,\, \xi} $ 
be $ {}_{2d}^{}\Psi^{k+1}_{k,\, \xi} $ which is the conjugation defined on \eqref{coordinate change map of xi-2d}. Then
$$ {}_{2d}^{}\Psi^{k+1}_{k,\, \xi} = H_{k,\, \xi}^{-1} \circ \La_k^{-1} $$

\nin for every $ k \in \N $ where $ H_{k,\; \xi}(x,y) = (f_k(x) - \eps_k(x,y, \xi_{k}),\, y) $ and $ \La_k^{-1}(x,\,y) = (\si_k x,\, \si_k y) $.
\end{prop}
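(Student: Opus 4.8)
The plan is to verify the identity $ {}_{2d}^{}\Psi^{k+1}_{k,\, \xi} = H_{k,\, \xi}^{-1} \circ \La_k^{-1} $ by showing that the two maps agree after we compose with the horizontal map $ H_{k,\, \xi} $ and the dilation $ \La_k $; equivalently, that $ H_{k,\, \xi} \circ {}_{2d}^{}\Psi^{k+1}_{k,\, \xi} \circ \La_k $ is the identity on $ \pi_{xy}(B(R^{k+1}F)) $. First I would record the explicit form of $ {}_{2d}^{}\Psi^{k+1}_{k,\, \xi} $ from \eqref{coordinate change map of xi-2d} with $ n = k+1 $, and of the three-dimensional scaling map $ \psi^{k+1}_v = H_k^{-1} \circ \La_k^{-1} $ that conjugates $ F_k^2 $ to $ R F_k = F_{k+1} $. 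The key structural input is the commutative diagram defining $ {}_{2d}^{}\Psi^n_{k,\,\xi,\,\tip} $ together with Proposition \ref{invariant surfaces on each deep level}: the surface $ Q_{k+1} = (\Psi^{k+1}_{k})^{-1}(Q_k) $ is the graph of $ \xi_{k+1} $, so that on the two-dimensional level the conjugacy between $ ({}_{2d}F_{k,\,\xi})^2 $ and $ {}_{2d}F_{k+1,\,\xi} $ is forced to be $ \pi_{xy,\,k}^{\xi_k}\circ\psi^{k+1}_v|_{Q_{k+1}}\circ(\pi_{xy,\,k+1}^{\xi_{k+1}})^{-1} $, because conjugacies of a (renormalizable) H\'enon-like map onto its renormalization are rigidly determined by the factorization through the horizontal change of variables, exactly as in the analytic case of \cite{CLM}.

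The main computation is then to push the three-dimensional identity $ \psi^{k+1}_v = H_k^{-1}\circ\La_k^{-1} $ down to the surface and read off its first two coordinates. I would start from a point $ (x,y)\in\pi_{xy}(B(R^{k+1}F)) $, lift it to $ (x,y,\xi_{k+1}(x,y))\in Q_{k+1} $, apply $ \La_k^{-1}(w) = \si_k w $ and then $ H_k^{-1} $, and check that the image lies on $ Q_k $ — which it does, since $ \Psi^{k+1}_{k,\tip} = \psi^{k+1}_v $ (up to the translations $ T_k, T_n $ that move the tips to the origin) and $ \Psi^{k+1}_{k,\tip}(Q_{k+1})\subset Q_k $ by construction. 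Projecting with $ \pi_{xy} $ and using that the third coordinate is slaved to the first two via $ \xi_k $, the first-coordinate map becomes $ f_k(\si_k x) - \eps_k(\si_k x,\si_k y,\xi_k(\si_k x,\si_k y)) $ composed appropriately — i.e. precisely $ H_{k,\,\xi}^{-1} $ applied after $ \La_k^{-1} $. Here I would invoke the toy-model computation behind Proposition \ref{renormalization of toy model} (the first coordinate of the inverse horizontal map does not depend on $ z $, $ \di_z\phi^{-1}\equiv 0 $ for $ \bar\eps $ small), so that restricting $ H_k^{-1} $ to the graph $ Q_{k+1} $ and projecting genuinely yields the two-dimensional inverse horizontal map $ H_{k,\,\xi}^{-1} $ with the surface value $ \xi_k $ plugged in, rather than some extra $ z $-dependent correction.

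The step I expect to be the main obstacle is the bookkeeping around the tip translations and the identification of the surface values: one must check carefully that $ \pi_{xy,\,k}^{\xi_k}\circ\psi^{k+1}_v\circ(\pi_{xy,\,k+1}^{\xi_{k+1}})^{-1} $ really equals the map in \eqref{coordinate change map of xi-2d} and not merely a map agreeing with it up to $ O(\si^n) $ or up to a translation, since the claimed identity is exact. Concretely, the delicate point is that the second coordinate of the three-dimensional map is $ z\mapsto z-\de(y,f^{-1}(y),0) $, and one needs this correction, evaluated on $ Q_{k+1} $ and transported to $ Q_k $, to be consistent with $ \xi_k = \xi_{k+1}\circ(\text{scaling}) $ up to the dilation — i.e. that the definition of $ \xi_k $ via $ (\Psi^{k+1}_k)^{-1} $ in Proposition \ref{invariant surfaces on each deep level} is exactly what makes the diagram commute on the nose. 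Once that consistency is in hand, the remaining manipulations are the routine substitutions already displayed in \eqref{coordinate change map of xi-2d}–\eqref{eq-Jacobian of scope map of xi}, and the identity follows by comparing first and second coordinates term by term.
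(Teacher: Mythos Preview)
Your overall strategy is the same as the paper's: lift to three dimensions via the graph maps, use $\psi^{k+1}_v = H_k^{-1}\circ\La_k^{-1}$ together with the surface invariance $\Psi^{k+1}_{k,\tip}(Q_{k+1})\subset Q_k$, then project and compose with $H_{k,\xi}$ to recover $\La_k^{-1}$.

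There is, however, a genuine misstep. You invoke the toy-model fact $\di_z\phi_k^{-1}\equiv 0$ (from the computation behind Proposition \ref{renormalization of toy model}) to argue that no ``extra $z$-dependent correction'' appears when restricting $H_k^{-1}$ to the graph. That fact is false here: the proposition is stated for a small perturbation $F$ of $F_{\mod}$, so $\eps_k$ genuinely depends on $z$ and hence $\phi_k^{-1}$ does too. The paper's proof does not need, and does not use, any such vanishing. What makes the computation close up is purely the surface invariance you already recorded: since $H_k^{-1}\circ\La_k^{-1}$ carries $\textrm{graph}(\xi_{k+1})$ into $\textrm{graph}(\xi_k)$, the third coordinate of $H_k^{-1}(\si_k x,\si_k y,\si_k\xi_{k+1})$ is forced to equal $\xi_k(\phi_k^{-1},\si_k y)$. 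Feeding this into the universal identity $f_k\circ\phi_k^{-1}(w)-\eps_k\circ H_k^{-1}(w)=\pi_x(w)$, evaluated at $w=(\si_k x,\si_k y,\si_k\xi_{k+1})$, gives
\[
f_k\big(\phi_k^{-1}(\si_k x,\si_k y,\si_k\xi_{k+1})\big)-\eps_k\big(\phi_k^{-1},\,\si_k y,\,\xi_k(\phi_k^{-1},\si_k y)\big)=\si_k x,
\]
which is exactly $\pi_x\big(H_{k,\xi}\circ{}_{2d}\Psi^{k+1}_{k,\xi}(x,y)\big)=\si_k x$. No assumption on $\di_z\phi_k^{-1}$ enters. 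Your concern about tip translations and exactness is also not an obstacle: working directly with $\psi^{k+1}_v$ (rather than the tip-centered $\Psi^{k+1}_k$) the identity is exact, not asymptotic.
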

\begin{proof}
Recall the definitions of the horizontal-like diffeomorphism $ H_k $ and its inverse, $ H^{-1}_k $ as follows
\begin{align*}
H_k(w) & = \ (f_k(x) - \eps_k(w),\ y,\ z - \de_k (y, f_k^{-1}(y), 0)) \\
 H^{-1}_k(w) & = \ (\phi_k^{-1}(w),\ y ,\ z + \de_k (y, f_k^{-1}(y), 0)) .
\end{align*}
Observe that $ H_k \circ H^{-1}_k = \id $ and $ f_k \circ \phi_k^{-1}(w) - \eps_k \circ H_k^{-1}(w) = x $ for all points $ w \in \La_k^{-1}(B) $. Then if we choose the set $ \si_k \cdot \text{graph}(\xi_{k+1}) \subset \La_k^{-1}(B) $, then the similar identical equation holds. 
By the definition of $ {}_{2d}^{}\Psi^n_{k,\; \xi} $, the following equation holds
\msk
\begin{align*}
{}_{2d}^{}\Psi^{k+1}_{k,\; \xi}(x,y) &= \ \pi_{xy}^{\xi_k} \circ \Psi^{k+1}_k \circ (\pi_{xy}^{\xi_{k+1}})^{-1}(x,y) \\[0.2em]
&= \ \pi_{xy}^{\xi_k} \circ \Psi^{k+1}_k (x,y, \xi_{k+1}) \\[0.2em]
&= \ \pi_{xy}^{\xi_k} \circ H^{-1}_k \circ \La_k^{-1}(x,y, \xi_{k+1}) \\[0.2em]
&= \ \pi_{xy}^{\xi_k} \circ H^{-1}_k (\si_k x,\,\si_k y,\, \si_k \xi_{k+1}) \\[0.2em]
(*) \quad &= \ \pi_{xy}^{\xi_k}\, \big( \phi_k^{-1}(\si_k x,\,\si_k y,\, \si_k \xi_{k+1}),\, \si_k y,\, \xi_k (\phi_k^{-1} ,\si_k y)\big) \\[0.2em]
&= \ ( \,\phi_k^{-1}(\si_k x,\,\si_k y,\, \si_k \xi_{k+1}),\, \si_k y \, ) . \\[-1.2em]
\end{align*} 
In the above equation, $ (*) $ is involved with the fact that $ H^{-1}_k \circ \La_k^{-1}(\,\text{graph}(\xi_{k+1})) \subset \text{graph}(\xi_{k}) $. 
Let us calculate $ H_{k,\, \xi} \circ {}_{2d}^{}\Psi^{k+1}_{k,\; \xi}(x,y) $. \ssk The second coordinate function of it is just $ \si_k y $. The first coordinate function is as follows
\begin{align*}
& \quad \ \ f_k \circ \phi_k^{-1}(\si_k x,\,\si_k y,\, \si_k \xi_{k+1}) - \eps_k \big( \phi_k^{-1}(\si_k x,\,\si_k y,\, \si_k \xi_{k+1}),\, \si_k y,\, \xi_k (\phi_k^{-1} ,\si_k y)\big) \\
&= \ f_k \circ \phi_k^{-1}(\si_k x,\,\si_k y,\, \si_k \xi_{k+1}) - \eps_k \circ H^{-1}_k (\si_k x,\,\si_k y,\, \si_k \xi_{k+1}) \\
&= \ \si_k x .
\end{align*}
Then $ H_{k,\; \xi}\, \circ \, {}_{2d}^{}\Psi^{k+1}_{k,\; \xi}(x,y) = ( \si_k x,\, \si_k y) $. However, since $  H_{k,\, \xi}\, \circ \,\big( H^{-1}_{k,\, \xi}(x,y) \circ \La_k^{-1}(x,y) \big) = ( \si_k x,\, \si_k y) $, by the uniqueness of inverse map  
$$ {}_{2d}^{}\Psi^{k+1}_{k,\; \xi} = H_{k,\; \xi}^{-1} \circ \La_k^{-1} . $$
\end{proof}
\nin 
Lemma \ref{2d scaling map of Cr conjugation} enable us to define the renormalization of two dimensional $ C^r $ H\'enon-like maps as an extension of the renormalization of analytic two dimensional H\'enon-like maps.

\ssk
\begin{defn} \label{definition of renormalizable Cr Henon map}
 Let $ F : (x,\;y) \mapsto (f(x) -\eps(x,y),\; x) $ be a $ C^r $ H\'enon-like map with $ r \geq 2 $. If $ F $ is renormalizable, then $ RF $, the {\em renormalization} of $ F $ is defined as follows
$$ RF = (\La \circ H) \circ F^2 \circ (H^{-1} \circ \La^{-1}) $$
where $ H(x,y) = (f(x) -\eps(x,y),\; y) $ and the linear scaling map $ \La(x,y) = (sx, sy) $ for the appropriate number $ s < -1 $. 
\end{defn}
\nin If $ F $ is renormalizable $ n $ times, then the above definition can be applied to $ R^kF $ for $ 1 \leq k \leq n $ successively. The two dimensional map $ _{2d}F_{n,\, \xi} $ with the $ C^r $ function $ \xi_n $ is the same as $ R^nF_{2d,\, \xi} $ by Lemma \ref{2d scaling map of Cr conjugation} and the above definition. Thus 
the map $ _{2d}F_{n,\, \xi} $ is realized to be $ R^nF_{2d,\, \xi} $ and called the $ n^{th} $ {\em renormalization} of $ F_{2d,\, \xi} $.
\comm{************ 
\begin{cor}
Let $ {}_{2d}^{}\Psi^k_{k,\, \xi} $ be the defined on Lemma \ref{2d scaling map of Cr conjugation} and $ \La_k^{-1}(x,y) $ be the linear scaling part of the map $ {}_{2d}^{}\Psi^k_{k,\, \xi} $. Then
$$ \La_k^{-1}(x,y) = \pi_{xy} \circ \La_k^{-1}(x,y,z) $$
for every $ k \in \N $ where $ \La_k^{-1}(x,y,z) $ is the linear scaling part of three dimensional map $ \Psi^{k+1}_k $.
\end{cor}
\begin{proof}
By Proposition \ref{beta1 as a periodic point} with induction, $  H_{k}^{-1} \circ \La_k^{-1} (\beta_0(R^{k+1}F)) = \beta_1(R^kF) $ for each $ k \in \N $. This proposition is valid for any two dimensional renormalizable H\'enon-like maps.
 Then by the definition of $  H_{k}^{-1} $, the second coordinate of image of fixed point $ \beta_1 $ under $ H_{k}^{-1} \circ \La_k^{-1} $ is following. \ssk
$$ \pi_{y} \big( H_{k}^{-1} \circ \La_k^{-1} (\beta_0(R^{k+1}F)) \big) = \si_k \cdot \pi_y (\beta_0(R^{k+1}F)) \big) = \pi_y \big( \beta_1(R^kF) \big). $$ 

Since the points $ \beta_i $ for $ i =0, 1 $ are fixed points of the H\'enon-like map, we observe that $ x $ and $ y- $coordinates of each fixed points are same.
$$ \pi_x \big(\beta_0(R^{k+1}F) \big) = \pi_y \big(\beta_0(R^{k+1}F) \big) \quad \text{and} \quad \pi_x \big( \beta_1(R^kF) \big) = \pi_y \big( \beta_1(R^kF) \big) $$
Then the $ x- $coordinate shrinks with the same constant factor\, $ \si_k $, that is, \, $ \si_k \,\cdot \, \pi_x (\beta_0(R^{k+1}F)) \big) = \pi_x \big( \beta_1(R^kF) \big) $. \ssk \\
The fact that every invariant surfaces contain the fixed points of $ R^kF $ for each $ k $ implies that the fixed points of $ R^kF_{2d,\, \xi} $ is the projected image of the corresponding fixed points of $ R^kF $ for each $ k \in \N $.
\begin{align*}
\pi_{xy} (\beta_i (R^kF)) = \beta_i (R^kF_{2d,\, \xi})
\end{align*}
for $ i =0,1 $ and $ k \in \N $. Hence, $ \La_k^{-1}(x,y) = \pi_{xy} \circ \La_k^{-1}(x,y,z) $.
\end{proof}
*************}
\msk 
\subsection{Universality of conjugated two dimensional maps}
Recall that 
$ \OO_F $ is the same as $ \OO_{F|_Q} $ which is the critical Cantor set restricted to the invariant surface $ Q $. By the $ C^r $ conjugation $ \pi_{xy}^{\xi} $ between $ F|_Q $ and $ F_{2d,\, \xi} $, the ergodic invariant measure on $ \OO_{F_{2d,\, \xi}} $ is defined as the push forward measure $ \mu $ on $ \OO_{F} $ by the map $ \pi_{xy}^{\xi} $, that is, $ (\pi_{xy}^{\xi})_*(\mu) \equiv \mu_{2d,\,\xi}$.  In particular, it is defined as 
\begin{align*}
\mu_{2d,\,\xi} \big(\pi_{xy}^{\xi} (\OO_F \cap B^n_{\bf w}) \big) = \mu_{2d,\,\xi} \big(\pi_{xy}^{\xi}(\OO_F) \cap \pi_{xy}^{\xi}( B^n_{\bf w}) \big) = \frac{1}{\;2^n} .
\end{align*}
Since $ \OO_{F|_Q} $ is independent of any particular surface, so is $ \pi_{xy}^{\xi}(\OO_F) $. Then we express this measure to be $ \mu_{2d} $ because the measure, $ \mu_{2d,\,\xi} $ is also independent of $ \xi $. Let us define the {\em average Jacobian} of $ F_{2d, \, \xi} $
$$
b_{2d} = \exp \int_{\OO_{F_{2d}}} \log \Jac  F_{2d, \, \xi} \; d\mu_{2d} .
$$
This average Jacobian is independent of the surface map $ \xi $ because every invariant surfaces contains the same critical Cantor set, $ \OO_{F_{2d}} $. 

\begin{lem} \label{Universal Jacobian determinant of Cr Henon map}
Let $ F $ be in $ \II(\bar \eps) $ which is a sufficiently small perturbation of toy model map with $ b_1 \gg b_2 $. Suppose that invariant $ C^r $ surfaces $ Q_n $ with $ 2 \leq r < \infty $ under $ R^nF $ contains $ \overline{\Per}_{R^nF} $. Suppose also that $ Q_n =  \text{graph}\,(\xi_n) $ where $ \xi_n $ is $ C^r $ map from $ I^x \times I^y $ to $ I^z $. Let $ R^nF_{2d,\, \xi} $ be $ \pi_{xy}^{\xi_n} \circ F_n|_{\,Q_n} \circ (\pi_{xy}^{\xi_n})^{-1} $ for each $ n\geq 1 $. Then 
$$ \Jac R^nF_{2d,\, \xi} = b_{1,\, 2d}^{2^n} \; a(x) (1+ O(\rho^n))
$$
where $ b_{1,\, 2d} $ is the average Jacobian of \,$F_{2d, \, \xi} $ and $ a(x) $ is the universal function of $ x $ for some positive $ \rho < 1 $.
\end{lem}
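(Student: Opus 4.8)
The plan is to transcribe the Carvalho--Lyubich--Martens proof of Theorem~\ref{Universality of the Jacobian} (see also \cite{Nam}), carried out for the conjugated two dimensional $C^r$ map $F_{2d,\,\xi}$ and its scope maps $ {}_{2d}^{}\Psi^n_{k,\,\xi} $ in place of the analytic $F$ and $\Psi^n_k$. Iterating Proposition~\ref{2d scaling map of Cr conjugation} (and using that $_{2d}F_{n,\,\xi}=R^nF_{2d,\,\xi}$) shows that $R^nF_{2d,\,\xi}$ is conjugate to $(F_{2d,\,\xi})^{2^n}$ by $ {}_{2d}^{}\Psi^n_{0,\,\xi}:={}_{2d}^{}\Psi^1_{0,\,\xi}\circ\cdots\circ {}_{2d}^{}\Psi^n_{n-1,\,\xi} $, the composition of the scope maps with the tip moved to the origin, so that
\begin{equation*}
\Jac R^nF_{2d,\,\xi}(w) \;=\; \Jac (F_{2d,\,\xi})^{2^n}\!\big({}_{2d}^{}\Psi^n_{0,\,\xi}(w)\big)\;\cdot\; \frac{\Jac {}_{2d}^{}\Psi^n_{0,\,\xi}(w)}{\Jac {}_{2d}^{}\Psi^n_{0,\,\xi}\!\big(R^nF_{2d,\,\xi}(w)\big)} .
\end{equation*}
It then suffices to show that the first factor equals $ b_{1,\,2d}^{2^n}(1+O(\rho^n)) $ and that the quotient converges to the universal function $a(x)$.

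For the first factor I would run the distortion argument directly on $F_{2d,\,\xi}$, which is a genuine two dimensional $C^r$ H\'enon-like map, $r\geq 2$, formally infinitely renormalizable with $C^r$ conjugation, and which carries the dyadic adding machine on $\OO_{F_{2d,\,\xi}}=\pi_{xy}^{\xi}(\OO_F)$ with invariant measure $\mu_{2d}$. Its pieces satisfy $ {}_{2d}B^n_{\bf w}\subset \pi_{xy}^{\xi}\big(B^n_{\bf w}\big) $, so by \eqref{eq-diameter of box Bn} and the fact that $\pi_{xy}^{\xi}$ is $1$--Lipschitz one gets $\diam({}_{2d}B^n_{\bf w})\leq C\si^n$; since $F_{2d,\,\xi}$ is $C^2$, $\log\Jac F_{2d,\,\xi}$ is Lipschitz, and the $C^r$ analogue of Lemma~\ref{Distortion lemma} (its proof uses only geometric contraction of pieces along orbits together with bounded variation of $\log\Jac$, both available here) yields $ \Jac(F_{2d,\,\xi})^{2^n}(p)=b_{1,\,2d}^{2^n}(1+O(\rho^n)) $ for $p$ in any level-$n$ piece, in particular for $ p={}_{2d}^{}\Psi^n_{0,\,\xi}(w)\in {}_{2d}B^n_{0,\,\tip} $.

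For the quotient I would substitute \eqref{eq-Jacobian of scope map of xi}, rewritten as
\begin{equation*}
\Jac {}_{2d}^{}\Psi^n_{0,\,\xi} \;=\; \si_{n,\,0}\,\alpha_{n,\,0}\left[\,1+\di_x S^n_{0,\,\xi}+\di_z S^n_{0,\,\xi}\,\di_x\xi_n+\frac{\si_{n,\,0}\,u_{n,\,0}}{\alpha_{n,\,0}}\,\di_x\xi_n\,\right],
\end{equation*}
so that the prefactor $\si_{n,\,0}\alpha_{n,\,0}$ cancels in the quotient. By Lemma~\ref{Asymptotic non-linear part}, $1+\di_x S^n_{0,\,\xi}=v_*'(x)(1+O(\rho^n))$ with the same universal $v_*$ as in the analytic theory; by Proposition~\ref{invariant surfaces on each deep level}, $\|\di_x\xi_n\|\leq C\si^n$, so $\di_z S^n_{0,\,\xi}\,\di_x\xi_n=O(\si^n)$; and solving the first line of \eqref{exponential convergence of the invariant surface xi} for $\di_x\xi_n$ and using $\si_{n,\,0}/\alpha_{n,\,0}\asymp(-\si)^{-n}$, the super-exponential convergence $u_{n,\,0}\to u_{*,\,0}$, and the $C^2$ regularity of $\xi$ (so that $\di_x\xi$ over the shrinking piece $B^n_0$ converges to $c_1:=\di_x\xi(\tau_F)$ at rate $O(\si^n)$) one finds $\frac{\si_{n,\,0}u_{n,\,0}}{\alpha_{n,\,0}}\di_x\xi_n=u_{*,\,0}\,\frac{c_1\,v_*'(x)}{1-u_{*,\,0}c_1}(1+O(\rho^n))$. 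Since $v_*'(x)+u_{*,\,0}\frac{c_1 v_*'(x)}{1-u_{*,\,0}c_1}=\frac{v_*'(x)}{1-u_{*,\,0}c_1}$, the bracket equals $\frac{v_*'(x)}{1-u_{*,\,0}c_1}(1+O(\rho^n))$, the $w$--independent constant $(1-u_{*,\,0}c_1)^{-1}$ cancels in the quotient, and we are left with $\dfrac{v_*'(\pi_x w)}{v_*'\big(\pi_x R^nF_{2d,\,\xi}(w)\big)}(1+O(\rho^n))$, exactly as in the analytic two dimensional case. Finally, since $R^nF_{2d,\,\xi}(x,y)=(f_n(x)-\eps_n(x,y,\xi_n(x,y)),\,x)$ with $f_n\to f_*$ and $\|\eps_n\|\to 0$ (renormalization convergence for $F\in\II(\bar\eps)$), one has $\pi_x R^nF_{2d,\,\xi}(w)=f_*(\pi_x w)+O(\rho^n)$, so the quotient tends to the universal function $a(x)$ of Theorem~\ref{Universality of the Jacobian}; here $\rho$ is taken to be the largest of the finitely many contraction rates that appear. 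Combining the two factors gives the statement.

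The main obstacle is the quotient step, and within it the term $\frac{\si_{n,\,0}u_{n,\,0}}{\alpha_{n,\,0}}\di_x\xi_n$: unlike in the purely two dimensional analytic situation this term is $O(1)$ rather than $o(1)$, so one cannot simply discard it. One must use the identity \eqref{exponential convergence of the invariant surface xi} and the $C^2$ bound on the surface to see that its limit is a constant multiple of $v_*'(x)$, independent of $w$, so that it modifies $\Jac {}_{2d}^{}\Psi^n_{0,\,\xi}$ only by a factor that cancels in the quotient and leaves the universal function $a(x)$ unchanged. The remaining ingredients --- the $C^r$ distortion estimate and the renormalization convergence $f_n\to f_*$, $\eps_n\to 0$ --- are the $C^r$ transcriptions of standard facts from \cite{CLM} and \cite{Nam}.
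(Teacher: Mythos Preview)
Your proposal is correct and follows essentially the same route as the paper's own proof: the chain-rule decomposition into the distortion factor $b_{1,\,2d}^{2^n}(1+O(\rho^n))$ and the quotient of $\Jac {}_{2d}^{}\Psi^n_{0,\,\xi}$, followed by the analysis of that quotient via \eqref{eq-Jacobian of scope map of xi}, Lemma~\ref{Asymptotic non-linear part}, Proposition~\ref{invariant surfaces on each deep level}, and the identity \eqref{exponential convergence of the invariant surface xi}. In particular, your handling of the $O(1)$ term $\frac{\si_{n,\,0}u_{n,\,0}}{\alpha_{n,\,0}}\di_x\xi_n$ --- solving \eqref{exponential convergence of the invariant surface xi} to see it is asymptotically a constant multiple of $v_*'(x)$ that cancels in the quotient --- is exactly what the paper does.
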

\begin{proof}
Lemma \ref{Distortion lemma} could be applied for $ C^r $ H\'enon-like map for $ r \geq 2 $. Thus we obtain 
\begin{align*}
\Jac F^{2^n}_{2d,\, \xi} = b_{1,\, 2d}^{2^n} (1+ O(\rho^n)) .
\end{align*}
Moreover, the chain rule implies that
\begin{align*}
\Jac R^nF_{2d,\, \xi} = b_{1,\, 2d}^{2^n} \; \frac{\Jac {}_{2d} \Psi^n_{0,\,\xi,\,\tip}(x,y)}{\Jac {}_{2d} \Psi^n_{0,\,\xi,\,\tip}(R^nF_{2d,\, \xi}(x,y))} (1+O(\rho^n)) .
\end{align*}
After letting the tip on every level move to the origin by appropriate linear map, the equation \eqref{eq-Jacobian of scope map of xi} implies that 
\begin{align}
\Jac {}_{2d}^{}\Psi^n_{0,\, \xi} = \si_{n,\,0} \Big( \alpha_{n,\,0} \cdot \di_x \big(x + S^n_0(x,y,\,\xi_n)\big) +  \si_{n,\,0} \,u_{n,\,0}\cdot \di_x \xi_n  \Big) .
\end{align}
Then in order to have the universal expression of Jacobian determinant, we need the asymptotic of following maps 
$$ \di_x \big(x + S^n_0(x,y,\,\xi_n)\big) \quad \textrm{and} \quad \dfrac{\si_{n,\,0}}{\alpha_{n,\,0}}\; \di_x \xi_n $$
By Lemma \ref{Asymptotic non-linear part}, 
\begin{align*}
x + S^n_0(x,y,\, \xi_n) = v_*(x) + a_{F,\,1}\: y^2 + a_{F,\,2}\: y \cdot \xi_n + a_{F,\,3}\, (\xi_n)^2 + O(\rho^n) .
\end{align*} 
The above asymptotic has $ C^1 $ convergence with the variable, $ x $. Then
\begin{align*}
\di_x \big(x + S^n_0(x,y,\,\xi_n)\big) = v_*'(x) + a_{F,\,2}\,y\cdot \di_x\xi_n +  2\,a_{F,\,3}\cdot \xi_n\cdot \di_x \xi_n + O(\rho^n) .
\end{align*} \ssk
where $ v_*(x) $ is the universal function for some $ \rho \in (0,1) $. By Proposition \ref{invariant surfaces on each deep level}, we see $ \| \di_x \xi_n \| \leq C \:\! \si^n $.
Then
\begin{align} \label{exponential convergence of Sn with xi}
\di_x \big(x + S^n_0(x,y,\,\xi_n)\big) = v_*'(x) + O(\rho^n) .
\end{align}
By the equation \eqref{exponential convergence of the invariant surface xi} in Proposition \ref{invariant surfaces on each deep level}, 
\msk
\begin{align*}
\frac{\si_{n,\,0}}{\alpha_{n,\,0}} \; \frac{\di \xi_n}{\di x} = & \ \di_x \xi(\bar x, \bar y) \cdot \left[ 1 + \di_x S^n_0(x,y,\,\xi_n) + \frac{\si_{n,\,0}}{\alpha_{n,\,0}} \; u_{n,\,0} \frac{\di \xi_n}{\di x} \right] \\[0.4em]
\textrm{Thus we obtain that}  \qquad \frac{\si_{n,\,0}}{\alpha_{n,\,0}} \; \frac{\di \xi_n}{\di x} = & \ \frac{\di_x \xi(\bar x, \bar y)}{1 - u_{n,\,0}\, \di_x \xi(\bar x, \bar y)} \cdot \big[ 1 + \di_x S^n_0(x,y,\,\xi_n) \big] \phantom{*********} \\[-1em]
\end{align*} 
where $ (\bar x, \bar y) \in \Psi^n_{0, {\bf v}}(B(R^nF_{2d,\,\xi})) $ for all big enough $ n $. Thus $ (\bar x, \bar y) $ converges to the origin as $ n \ra \infty $ exponentially fast by the equation \eqref{eq-diameter of box Bn}. 
\begin{align*} 
\diam(_{2d}\Psi^n_{0,\,\xi}(B)) \leq \diam(\Psi^n_0(B)) \leq C \si^n
\end{align*}
for some $ C>0 $. Recall that the map, $ \di_x \xi(\bar x, \bar y) $ converges to $ \di_x \xi(0,0) $ exponentially fast and $ u_{n,\,0} $ converges to $ u_{*,\,0} $ super exponentially fast. Then
\begin{align}
\frac{\si_{n,\,0}}{\alpha_{n,\,0}} \; \frac{\di \xi_n}{\di x} = \frac{\di_x \xi(0,0)}{1 - u_{*,\,0}\, \di_x \xi(0,0)} \ v_*'(x) + O(\rho^n) .
\end{align}
Let $ (x',y') = R^nF_{2d,\, \xi}(x,y) $. Then
\begin{align} \label{asymptotic of the coordinate change with xi}
\frac{\Jac {}_{2d}^{}\Psi^n_{0,\, \xi}(x,y)}{\Jac {}_{2d}^{}\Psi^n_{0,\, \xi}(x',y')} = 
\frac{1+ \di_x(S^n_{0,\, \xi}(x,y)) + \dfrac{\si_{n,\,0}}{\alpha_{n,\,0}}\;u_{n,\,0}\, \di_x \xi_n(x,y) } {1+ \di_x(S^n_{0,\, \xi}(x',y')) + \dfrac{\si_{n,\,0}}{\alpha_{n,\,0}}\;u_{n,\,0}\, \di_x \xi_n(x',y') }
\end{align}
where $ S^n_0(x,y,\,\xi_n) = S^n_{0,\, \xi}(x,y) $. The translation does not affect Jacobian determinant and each translation from tip to the origin converges to the map $ w \mapsto \tau_{\infty} $ exponentially fast where $ \tau_{\infty} $ is the tip of two dimensional degenerate map, $ F_*(x,y) = (f_*(x),\,x) $. Then by the similar calculation used in Universality Theorem in \cite{CLM}, the equation \eqref{asymptotic of the coordinate change with xi} converges to the following universal function exponentially fast. 
\begin{align*}
\lim_{n \ra \infty} \frac{\Jac {}_{2d}^{}\Psi^n_{0,\, \xi,\,\tip}(x,y)}{\Jac {}_{2d}^{}\Psi^n_{0,\, \xi,\,\tip}(x',y')} &=  \frac{v_*'(x-\pi_x(\tau_{\infty})) + \dfrac{u_{*,\,0}\,\di_x \xi ( \pi_{xy}(\tau_F))}{1- u_{*,\,0}\,\di_x \xi ( \pi_{xy}(\tau_F))}\;v_*'(x-\pi_x(\tau_{\infty})) }{v_*'(f_*(x)- \pi_y(\tau_{\infty})) + \dfrac{u_{*,\,0}\,\di_x \xi ( \pi_{xy}(\tau_F))}{1- u_{*,\,0}\,\di_x \xi ( \pi_{xy}(\tau_F))}\;v_*'(f_*(x)-\pi_y(\tau_{\infty}))} \\[-0.2em]
&= 
\frac{v_*'(x-\pi_x(\tau_{\infty}))}{v_*'(f_*(x)-\pi_y(\tau_{\infty}))} \\[0.4em]
& \equiv a(x) .
\end{align*}
\end{proof}

\begin{thm}[Universality of\; $ C^r $ H\'enon-like maps with $ C^r $ conjugation for $ 2 \leq r < \infty $] \label{Universality of Cr Henon maps}
Let H\'enon-like map $ F_{2d,\, \xi} $ be the $ C^r $ map 
defined on \eqref{Cr Henon map with invariant surface 0} for $ 2 \leq r < \infty $. 
Suppose that $ F_{2d,\, \xi} $ is infinitely renormalizable. Then
\begin{align}
R^nF_{2d,\, \xi}(x,y) = (f_n(x) - (b_{2d})^{2^n}\, a(x)\, y\, (1+ O(\rho^n)),\ x)
\end{align}
where 
$ b_{2d} $ is the average Jacobian of $ F_{2d,\, \xi} $ and $ a(x) $ is the universal function for some $ 0 < \rho < 1 $.
\end{thm}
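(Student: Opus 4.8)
The strategy is to deduce the statement from the Jacobian asymptotic already in hand (Lemma~\ref{Universal Jacobian determinant of Cr Henon map}) by a single one–variable integration, exactly as in the analytic case of \cite{CLM}: a H\'enon-like map is determined by its Jacobian once the perturbation is normalized to vanish on the line $\{y=0\}$.

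First I would fix coordinates. Since $F_{2d,\,\xi}$ is infinitely renormalizable, Proposition~\ref{2d scaling map of Cr conjugation} and Definition~\ref{definition of renormalizable Cr Henon map} give that each $R^nF_{2d,\,\xi}$ is again H\'enon-like; write $R^nF_{2d,\,\xi}(x,y) = (\phi_n(x,y),\,x)$ and normalize by setting
\[
f_n(x) := \phi_n(x,0), \qquad \eps_{2d,\,n}(x,y) := f_n(x) - \phi_n(x,y),
\]
so that $R^nF_{2d,\,\xi}(x,y) = (f_n(x) - \eps_{2d,\,n}(x,y),\,x)$ with $\eps_{2d,\,n}(x,0)\equiv 0$. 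Differentiating the H\'enon-like form gives $\Jac R^nF_{2d,\,\xi} = \di_y \eps_{2d,\,n}$.

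Next I would apply Lemma~\ref{Universal Jacobian determinant of Cr Henon map}: under the standing assumptions of this section ($F$ a sufficiently small perturbation of a toy model with $b_1 \gg b_2$, carrying $C^r$ invariant surfaces $Q_n = \mathrm{graph}(\xi_n)$ at every level), it yields
\[
\Jac R^nF_{2d,\,\xi}(x,y) = (b_{2d})^{2^n}\, a(x)\,\bigl(1+O(\rho^n)\bigr),
\]
where $b_{2d}$ is the average Jacobian of $F_{2d,\,\xi}$ (the quantity written $b_{1,\,2d}$ in that lemma) and $a(x) = v_*'(x-\pi_x\tau_{\infty})/v_*'(f_*(x)-\pi_y\tau_{\infty})$ is the universal function. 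Integrating in $y$ and using $\eps_{2d,\,n}(x,0)=0$,
\[
\eps_{2d,\,n}(x,y) = \int_0^y \di_t \eps_{2d,\,n}(x,t)\,dt = \int_0^y (b_{2d})^{2^n} a(x)\bigl(1+O(\rho^n)\bigr)\,dt = (b_{2d})^{2^n}\,a(x)\,y\,\bigl(1+O(\rho^n)\bigr),
\]
which is precisely the asserted formula for $R^nF_{2d,\,\xi}$.

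The one point that genuinely needs care is that the error $O(\rho^n)$ in Lemma~\ref{Universal Jacobian determinant of Cr Henon map} is \emph{uniform} in $(x,y)$ over the box, so that it survives integration over $t\in[0,y]$; this is visible from the form of the ratio \eqref{asymptotic of the coordinate change with xi}, together with the estimate $\|\di_x\xi_n\| \le C\si^n$ of Proposition~\ref{invariant surfaces on each deep level} and the super-exponential convergence of $t_{n,\,0}, u_{n,\,0}, d_{n,\,0}$, all of which are uniform on the domain. The remaining point, that the universal function $a(x)$ produced by the $C^r$-conjugated coordinate-change maps is the same $a(x)$ as in the two-dimensional analytic theorem of \cite{CLM}, was already established inside the proof of that lemma by computing $\lim_{n}\Jac{}_{2d}^{}\Psi^n_{0,\,\xi,\,\tip}(x,y)/\Jac{}_{2d}^{}\Psi^n_{0,\,\xi,\,\tip}(R^nF_{2d,\,\xi}(x,y))$. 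I expect this uniformity bookkeeping to be the only real obstacle; granting it, the theorem is a one-line consequence of Lemma~\ref{Universal Jacobian determinant of Cr Henon map}.
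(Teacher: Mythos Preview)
Your proposal is correct and follows essentially the same route as the paper: both write $R^nF_{2d,\,\xi}$ in H\'enon-like form, identify $\Jac R^nF_{2d,\,\xi}=\di_y\eps_{n,\,\xi_n}$, invoke Lemma~\ref{Universal Jacobian determinant of Cr Henon map}, and then integrate in $y$. The only cosmetic difference is that the paper carries along an integration constant $U_n(x)$ and absorbs it into $f_n$, whereas you normalize $\eps_{2d,\,n}(x,0)=0$ at the outset; your explicit remark on the uniformity of the $O(\rho^n)$ error is a useful addition that the paper leaves implicit.
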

\begin{proof}
By the smooth conjugation of two dimensional map and $ F_n|_{\,Q_n} $, we see that
\begin{align*}
R^nF_{2d,\, \xi}(x, y) = (f_n(x) - \eps_n(x,y, \xi_n) ,\ x)
\end{align*}
\ssk Denote $ \eps_n(x,y, \xi_n) $ by $ \eps_{n,\,\xi_n}(x,y) $. Then the Jacobian of $ R^nF_{2d,\, \xi} $ is\; $ \di_y \eps_{n,\,\xi_n}(x,y) $. By Lemma \ref{Universal Jacobian determinant of Cr Henon map}, $ \di_y \eps_{n,\,\xi_n}(x,y) = (b_{2d})^{2^n} \, a(x) (1+ O(\rho^n)) $. Then 
$$ \eps_{n,\,\xi_n}(x,y) = (b_{2d})^{2^n}\, a(x)\, y\, (1+ O(\rho^n)) + U_n(x) . $$ 
The map $ U_n(x) $ which depends only on the variable $ x $ can be incorporated to $ f_n(x) $. 
\end{proof}
\nin Recall that the conjugation between $ R^nF_{2d,\, \xi,\,\tip} $ and $ \big( R^kF_{2d,\, \xi} \big)^{2^{n-k}} $ is $ {}_{2d}^{}\Psi^n_{k,\, \xi} $. Recall also that $ \si_{n,\,k} = (-\si)^{n-k} (1 + O(\rho^k)) $ and\, $ \alpha_{n,\,k} = \si^{2(n-k)} (1 + O(\rho^k)) $.

\msk
\begin{thm} \label{Universal estimation of scaling maps}
Let 
$ R^kF \in \II(\bar \eps^{2^k}) $ be the map which has invariant surfaces $ Q_k \equiv \text{graph}(\xi_k) $ tangent to $ E^{pu} $ over the critical Cantor set. Then the coordinate change map, $ {}_{2d}^{}\Psi^n_{k,\, \xi} $ is expressed as follows
\begin{equation} \label{scaling of Cr Henon maps}
\begin{aligned}
{}_{2d}^{}\Psi^n_{k,\, \xi}(x,y) = & \ \big(\,\alpha_{n,\,k}\, (\,x +\, _{2d}S^n_k(x,y)) + \si_{n,\,k}\cdot {}_{2d}t_{n,\,k}\cdot y,\ \si_{n,\,k}\,y\,\big)
\end{aligned} \ssk
\end{equation}
where $ x+{}_{2d}S^n_k(x,y) $ has the asymptotic
$$ x + {}_{2d}S^n_k(x,y) = v_*(x) + a_{F,\:k}\, y^2 + O(\rho^{n-k}) $$  
for $ |\;\!a_{F,\:k}| = O(\eps^{2^k}) $ and $ \rho \in (0,1) $. 
\end{thm}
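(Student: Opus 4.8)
The plan is to recognize ${}_{2d}\Psi^n_{k,\,\xi}$ as the ordinary ``scope map'' of the $C^r$ two dimensional H\'enon-like map $F_{2d,\,\xi}$, and then to re-run, in the $C^r$ category, the computation that produces the analogous asymptotics in the analytic case (Lemma \ref{Asymptotic non-linear part} and its two dimensional counterpart in \cite{CLM}, \cite{Nam}). First I would assemble the one-step description: by Proposition \ref{2d scaling map of Cr conjugation}, for every $j\geq k$ one has ${}_{2d}\Psi^{j+1}_{j,\,\xi}=H^{-1}_{j,\,\xi}\circ\La_j^{-1}$, where $H_{j,\,\xi}(x,y)=(f_j(x)-\eps_{j,\,\xi_j}(x,y),\,y)$ with $\eps_{j,\,\xi_j}(x,y):=\eps_j(x,y,\xi_j(x,y))$ and $\La_j^{-1}(x,y)=(\si_j x,\si_j y)$. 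Since ${}_{2d}\Psi^n_{k,\,\xi}$ is, by construction, the composition of these one-step conjugations from level $n$ down to level $k$ (interlaced with the translations $T_{2d,\,j}$ moving the tips to the origin, as in \S\ref{2d renorm operator} and after Lemma \ref{Universal Jacobian determinant of Cr Henon map}), it has exactly the shape of the scope map of a genuine two dimensional H\'enon-like map. I would prefer this route to a head-on manipulation of the already available formula \eqref{coordinate change map of xi-2d}: there the surface terms $\si_{n,\,k}u_{n,\,k}(\xi_n+R^n_k(y))$ collapse into the clean form only after the growing factor $\si_{n,\,k}/\alpha_{n,\,k}\asymp\si^{-(n-k)}$ is compensated by cancellations that are automatic once the two descriptions are known to agree, but awkward to exhibit directly.

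I would then feed in the two inputs that replace analyticity. \emph{Universality:} applying Theorem \ref{Universality of Cr Henon maps} to $F_{2d,\,\xi}$ gives, for all $j\geq k$, $\eps_{j,\,\xi_j}(x,y)=(b_{2d})^{2^j}a(x)\,y\,(1+O(\rho^j))$, and the underlying unimodal maps $f_j$ converge to the universal limit $f_*$ exponentially fast (as in \cite{CLM}); in particular $\|\eps_{j,\,\xi_j}\|_{C^1}$ decays super-exponentially in $j$, and since $R^kF\in\II(\bar\eps^{2^k})$ forces $\|\eps_k\|=O(\bar\eps^{2^k})$ while restriction to $Q_k$ does not increase this norm, $\|\eps_{k,\,\xi_k}\|_{C^1}=O(\bar\eps^{2^k})=O(\eps^{2^k})$. \emph{Geometry:} the dilation ratios are $\si_{n,\,k}=(-\si)^{n-k}(1+O(\rho^k))$ and $\alpha_{n,\,k}=\si^{2(n-k)}(1+O(\rho^k))$, and $\diam\big({}_{2d}\Psi^n_{k,\,\xi}(B)\big)\leq C\si^{n-k}$, exactly as for the analytic and three dimensional scope maps (Proposition \ref{invariant surfaces on each deep level} and the discussion after Lemma \ref{Universal Jacobian determinant of Cr Henon map}); these depend only on the unimodal data.

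Next the computation itself. Pushing each diagonal dilation $\La_j^{-1}$ to the right past the horizontal maps $H^{-1}_{i,\,\xi}$ --- legitimate because $H_{j,\,\xi}$ is horizontal, so $DH_{j,\,\xi}$ is upper triangular with a $1$ in the lower-right slot --- collects the linear data into a single factor $\mathrm{diag}(\alpha_{n,\,k},\si_{n,\,k})$ followed by a unipotent $\left(\begin{smallmatrix}1&{}_{2d}t_{n,\,k}\\0&1\end{smallmatrix}\right)$ and a purely nonlinear residue with first coordinate $x+{}_{2d}S^n_k(x,y)$ and second coordinate $y$; this is the form \eqref{scaling of Cr Henon maps}. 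For the residue, write $H^{-1}_{j,\,\xi}(x,y)=(\phi^{-1}_{j,\,\xi}(x,y),y)$, where $\phi^{-1}_{j,\,\xi}$ solves $f_j\circ\phi^{-1}_{j,\,\xi}(x,y)-\eps_{j,\,\xi_j}(\phi^{-1}_{j,\,\xi}(x,y),y)=x$; this is a $C^r$ contraction whose solution is $C^{r-1}$-close to $x\mapsto f_j^{-1}(x)$ with corrections of size $\|\eps_{j,\,\xi_j}\|_{C^1}$. Telescoping the rescaled composition over $j=k,\dots,n-1$ and using $f_j\to f_*$ and $\|\eps_{j,\,\xi_j}\|_{C^1}\to 0$ (both at an exponential rate in $n-k$) gives $x+{}_{2d}S^n_k(x,y)\to v_*(x)+(\text{a $y$-correction})$ at rate $O(\rho^{n-k})$, where $v_*$ is the universal limit built from $f_*$ alone. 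Because $\eps_{j,\,\xi_j}(x,y)$ is affine in $y$ to leading order, the $y$-correction is quadratic in $y$ at leading order with coefficient $a_{F,\,k}$ controlled by $\|\eps_{k,\,\xi_k}\|_{C^1}=O(\bar\eps^{2^k})$; thus $x+{}_{2d}S^n_k(x,y)=v_*(x)+a_{F,\,k}y^2+O(\rho^{n-k})$ with $|a_{F,\,k}|=O(\eps^{2^k})$, after enlarging $\rho<1$ to absorb the various rates ($\si^{n-k}$, the $\rho$ of Theorem \ref{Universality of Cr Henon maps}, etc.). The $C^1$-in-$x$ control needed to compose these asymptotics legitimately is precisely what Lemma \ref{Asymptotic non-linear part} supplies in the analytic three dimensional case, and no higher regularity of the surface or of $\eps$ is used.

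The main obstacle is not the algebra --- which is word-for-word the analytic computation --- but verifying that the estimates of \cite{CLM}, \cite{Nam}, proved there for analytic maps where a $C^0$ bound dominates every higher derivative, survive for $C^r$ maps with $r\geq 2$. The steps that matter --- the contraction-mapping solution for $\phi^{-1}_{j,\,\xi}$, the super-exponential decay of $\|\eps_{j,\,\xi_j}\|$, the exponential convergence $f_j\to f_*$, and the bounded-distortion estimates passing from pre-renormalization to renormalization --- are each $C^1$- or $C^2$-statements, now supplied respectively by Theorem \ref{Universality of Cr Henon maps}, Proposition \ref{invariant surfaces on each deep level}, and Lemma \ref{Asymptotic non-linear part}; so the real work is an audit confirming that none of them secretly uses analyticity. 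Once that is in place, the block decomposition \eqref{scaling of Cr Henon maps} and the bound $|a_{F,\,k}|=O(\eps^{2^k})$ follow exactly as before.
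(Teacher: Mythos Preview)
Your proposal is correct and follows essentially the same approach as the paper: recognize ${}_{2d}\Psi^n_{k,\,\xi}$ via Proposition \ref{2d scaling map of Cr conjugation} as the composition $H_{k,\,\xi}^{-1}\circ\La_k^{-1}\circ\cdots\circ H_{n,\,\xi}^{-1}\circ\La_n^{-1}$, reshuffle linear and nonlinear parts into the form \eqref{scaling of Cr Henon maps}, and then observe that the recursive estimates for $_{2d}S^n_k$ from \cite{CLM} use only $C^2$ data and hence go through verbatim for $C^r$ maps with $r\geq 2$. The paper additionally sketches a second, alternative derivation --- working directly from the three dimensional formula \eqref{coordinate change map of xi-2d} and absorbing the surface terms $\si_{n,\,k}u_{n,\,k}(\xi_n+R^n_k(y))$ into $_{2d}S^n_k$ and $_{2d}t_{n,\,k}$ using the asymptotic $\xi_n(x,y)=c_0y(1+O(\si^n))$ from Proposition \ref{invariant surfaces on each deep level} --- which is precisely the route you chose to avoid; but this is offered only as an alternative, and your main line matches the paper's primary argument.
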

\begin{proof}
By Lemma \ref{2d scaling map of Cr conjugation}, the coordinate change map, $ {}_{2d}^{}\Psi^n_{k,\, \xi} $ is the composition of the inverse of horizontal diffeomorphisms with linear scaling maps as follows
$$ H_{k,\; \xi}^{-1} \circ \La_k^{-1} \circ H_{k+1,\; \xi}^{-1} \circ \La_{k+1}^{-1} \circ \cdots \circ H_{n,\; \xi}^{-1} \circ \La_n^{-1} . $$ 
Then after reshuffling non-linear and linear parts separately by direct calculations and letting the tip move to the origin by appropriate translations on each levels, the coordinate change map is of the form in \eqref{scaling of Cr Henon maps}. In order to estimate $ _{2d}S^n_k(x,y) $, the recursive formulas of the first and the second partial derivatives of $ _{2d}S^n_k(x,y) $ are required. 
However, the calculation in Section 7.2 in \cite{CLM} can be used because analyticity does not affect any recursive formulas of derivatives and furthermore it just requires $ C^r $ map for $ r\geq 2 $. Hence, recursive formulas with same estimations are applied to $ _{2d}S^n_k(x,y) $. Thus we have the following estimation
$$ x + \,_{2d}S^n_k(x,y) = v_*(x) + a_{F,\;k}\,y^2 + O(\rho^{n-k}) $$  
where $ |\;\! a_{F,\;k}| = O(\eps^{2^k}) $. Alternatively, let us choose the equation \eqref{coordinate change map of xi-2d}
$$ {}_{2d}^{}\Psi^n_{k,\, \xi}(x,y) = \left( \alpha_{n,\,k} (x + S^n_{k,\, \xi}(x,y) ) + \si_{n,\,k} \, t_{n,\,k}\; y +  \si_{n,\,k}\, u_{n,\,k} (\xi_n +R^n_k(y)),\, \si_{n,\,k}\, y \right)  $$
where $ S^n_{k,\, \xi}(x,y) = S^n_k (x,y, \xi_n(x,y)) $. By Proposition \ref{invariant surfaces on each deep level}, the map
$$ \xi_n(x,y) = c_0y + \eta(y) + O(\rho^n) $$ 
where the map $ \eta(y) $ is quadratic or higher order terms with $ \| \eta \|_{C^1} \leq C_0 \si^{n-k} $ for some $ C_0>0 $. By equations \eqref{eq-estimation of t,u,d} and \eqref{eq-C1 convergence of R-n-k}, $ |u_{n,\,k}| \leq C_1 \bar\eps^{2^k} $ and $ \| R^n_k \|_{C^1} \leq C_2 \si^{n-k} $ for some positive $ C_1 $ and $ C_2 $. Recall that the constants, $ \alpha_{n,\,k} = \si^{2(n-k)}(1+O(\rho^n)) $ and $ \si_{n,\,k} = (-\si)^{n-k}(1+O(\rho^n)) $. Hence, we appropriately define each terms of $ {}_{2d}^{}\Psi^n_{k,\, \xi} $ 
\begin{align*}
 {}_{2d}S^n_k(x,y) &= S^n_{k,\, \xi}(x,y) + \frac{\si_{n,\,k}}{\alpha_{n,\,k}}\,u_{n,\,k} [\,\xi_n(x,y) - c_0y + R^n_k(y) ] \\
 {}_{2d}t_{n,\,k} &= t_{n,\,k} + u_{n,\,k} c_0
\end{align*}
which are as desired. 
\ssk \end{proof}

\nin Let $ {}_{2d}t_{k+1,\,k} $ be $ {}_{2d}t_{k} $ for simplicity. Similarly, denote $ \alpha_{k+1,\,k} $ and $ \si_{k+1,\,k} $ to be $ \alpha_k $ and $ \si_k $ respectively. The following corollary and the proof is the same as those of analytic maps in \cite{CLM}. For the sake of completeness, the proof is written below. 
\begin{cor} \label{Estimation of tilt,t-k}
Let $ F_{2d,\,\xi} $ be the infinitely renormalizable $ C^r $ H\'enon-like map with single invariant surfaces tangent to $ E^{pu} $ over the critical Cantor set. Let $ {}_{2d}S^n_k $ be the coordinate change map between $ R^kF_{2d,\,\xi} $ and $ R^nF_{2d,\,\xi} $ defined in Theorem \ref{Universal estimation of scaling maps}. Then 
$$ t_k \asymp - (b_{2d})^{2^k} $$
for every $ k \in \N $. 
\end{cor}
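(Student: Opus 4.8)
\emph{Proof proposal.} The plan is to reproduce, for the two dimensional $C^r$ map $F_{2d,\,\xi}$, the tilt computation of \cite{CLM}; that argument uses only the $C^2$ normal form of the scaling maps together with the universality of the Jacobian determinant, both of which are already available here. First I would invoke Proposition \ref{2d scaling map of Cr conjugation} to put the one step scaling map in closed form, $ {}_{2d}^{}\Psi^{k+1}_{k,\,\xi}=H_{k,\,\xi}^{-1}\circ\La_k^{-1} $, with $ H_{k,\,\xi}^{-1}(x,y)=(\phi_{k,\,\xi}^{-1}(x,y),\,y) $ and $ \phi_{k,\,\xi}^{-1} $ determined implicitly by $ f_k(\phi_{k,\,\xi}^{-1}(x,y))-\eps_{k,\,\xi_k}(\phi_{k,\,\xi}^{-1}(x,y),y)=x $, where $ \eps_{k,\,\xi_k}(x,y)=\eps_k(x,y,\xi_k(x,y)) $ is the first coordinate nonlinearity of $ R^kF_{2d,\,\xi} $. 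Differentiating this identity in $ y $ gives $ \partial_y\phi_{k,\,\xi}^{-1}=\big(\partial_y\eps_{k,\,\xi_k}\big/(f_k'-\partial_x\eps_{k,\,\xi_k})\big)\circ H_{k,\,\xi}^{-1} $. Writing $ \tau_{2d,k} $ for the tip of $ R^kF_{2d,\,\xi} $ and using that $ {}_{2d}^{}\Psi^{k+1}_{k,\,\xi} $ fixes the tip (so $ \La_k^{-1}(\tau_{2d,k+1})=H_{k,\,\xi}(\tau_{2d,k}) $) together with $ \phi_{k,\,\xi}^{-1}\circ H_{k,\,\xi}=\pi_x $, I would compute $ \partial_y $ of the first coordinate of $ {}_{2d}^{}\Psi^{k+1}_{k,\,\xi} $ at the tip two ways: through $ H_{k,\,\xi}^{-1}\circ\La_k^{-1} $ and the chain rule it is $ \si_k\,\partial_y\eps_{k,\,\xi_k}(\tau_{2d,k})\big/\big(f_k'(\pi_x\tau_{2d,k})-\partial_x\eps_{k,\,\xi_k}(\tau_{2d,k})\big) $, while from the normal form \eqref{scaling of Cr Henon maps} it is $ \alpha_k\,\partial_y\,{}_{2d}S^{k+1}_k(\tau_{2d,k+1})+\si_k t_k $ with $ t_k={}_{2d}^{}t_{k+1,k} $. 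Since the reshuffling of \cite{CLM} normalizes $ \partial_y\,{}_{2d}S^{k+1}_k $ at the tip to be $ o\big((b_{2d})^{2^k}\big) $ and $ \alpha_k/\si_k=O(1) $, this produces
$$
t_k \;=\; \frac{\partial_y\eps_{k,\,\xi_k}(\tau_{2d,k})}{f_k'(\pi_x\tau_{2d,k})-\partial_x\eps_{k,\,\xi_k}(\tau_{2d,k})}\,\big(1+o(1)\big).
$$

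Next I would substitute the universality estimates. By Lemma \ref{Universal Jacobian determinant of Cr Henon map} (equivalently Theorem \ref{Universality of Cr Henon maps}), $ \partial_y\eps_{k,\,\xi_k}(x,y)=\Jac R^kF_{2d,\,\xi}(x,y)=(b_{2d})^{2^k}\,a(x)\,(1+O(\rho^k)) $ with $ a $ the universal positive function, while $ \|\eps_{k,\,\xi_k}\|_{C^1}=O\big((b_{2d})^{2^k}\big) $, so that $ \partial_x\eps_{k,\,\xi_k}(\tau_{2d,k})=o(1) $. Moreover $ f_k\to f_* $ in $ C^1 $ and the tips $ \tau_{2d,k} $ converge (both exponentially fast) to the tip $ \tau_\infty $ of the degenerate map $ F_*(x,y)=(f_*(x),x) $, whence $ f_k'(\pi_x\tau_{2d,k})\to f_*'(\pi_x\tau_\infty)=:c_* $, a nonzero universal constant which, exactly as in the analytic computation of \cite{CLM}, is negative ($ \pi_x\tau_\infty $ is the critical value of $ f_* $, and $ f_* $ is strictly decreasing near it). Substituting,
$$
t_k \;=\; (b_{2d})^{2^k}\,\frac{a(\pi_x\tau_{2d,k})}{f_k'(\pi_x\tau_{2d,k})}\,\big(1+o(1)\big)\;=\;(b_{2d})^{2^k}\,\frac{a(\pi_x\tau_\infty)}{c_*}\,\big(1+o(1)\big),
$$
and since $ a(\pi_x\tau_\infty)/c_* $ is a strictly negative constant and the ratios $ a(\pi_x\tau_{2d,k})/f_k'(\pi_x\tau_{2d,k}) $ stay between two negative constants uniformly in $ k $, we conclude $ t_k\asymp-(b_{2d})^{2^k} $ with a uniform implied constant for every $ k\in\N $.

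The main obstacle is the bookkeeping in the first step: one must pin down how the reshuffling in Theorem \ref{Universal estimation of scaling maps} distributes the $ y $-linear part of the first coordinate at the tip between the shear term $ \si_k t_k\,y $ and the $ \alpha_k $-scaled nonlinear block $ {}_{2d}S^{k+1}_k $, and in particular show that $ \partial_y\,{}_{2d}S^{k+1}_k $ at the tip is negligible next to $ (b_{2d})^{2^k} $, so that $ t_k $ really does capture the full tilt $ \partial_y\phi_{k,\,\xi}^{-1} $ at the tip up to lower order. This is precisely the normalization used for the analytic scope maps in \cite{CLM}; as already noted in the proof of Theorem \ref{Universal estimation of scaling maps}, the recursive formulas for the derivatives of $ {}_{2d}S^n_k $ and the estimates they yield coincide with those of Section 7.2 of \cite{CLM} and require only $ C^r $ with $ r\ge2 $, so it remains to specialize them to $ n=k+1 $. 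The remaining ingredients — $ f_k' $ bounded away from $ 0 $ near the tip and negative there, together with the exponential convergences $ \tau_{2d,k}\to\tau_\infty $ and $ f_k\to f_* $ — are standard facts carried over without change from the analytic theory.
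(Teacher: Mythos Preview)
Your proposal is correct and follows essentially the same idea as the paper: compare the derivative of the one–step scope map at the tip against the normal form \eqref{scaling of Cr Henon maps}, then feed in the Jacobian universality of Lemma~\ref{Universal Jacobian determinant of Cr Henon map}. The paper's execution is slightly slicker: instead of implicitly differentiating $\phi_{k,\,\xi}^{-1}$ to get the forward derivative, it differentiates the \emph{inverse} $\big({}_{2d}\Psi^{k+1}_{k,\,\xi}\big)^{-1}=\La_k\circ H_{k,\,\xi}$ at the tip directly. Since $H_{k,\,\xi}$ is explicit, this gives the off–diagonal entry $-s_k\,\partial_y\eps_{k,\,\xi_k}(\tau_k)$ without any implicit function theorem, and matching with the inverse of the shear–diagonal factorization yields $ {}_{2d}t_k=\alpha_k\,s_k\,\partial_y\eps_{k,\,\xi_k}(\tau_k)$ in one line. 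Your route through $\partial_y\phi_{k,\,\xi}^{-1}=\partial_y\eps_{k,\,\xi_k}/(f_k'-\partial_x\eps_{k,\,\xi_k})$ lands on the same expression (note $\alpha_k s_k=1/(f_k'-\partial_x\eps_{k,\,\xi_k})$ at the tip), just with one more step.

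The ``main obstacle'' you flag is not actually an obstacle: the reshuffling that produces the normal form \eqref{scaling of Cr Henon maps} \emph{by definition} places the full linear part of $D\,{}_{2d}\Psi^{k+1}_{k,\,\xi}$ at the tip into the shear–diagonal factor, so $\partial_y\,{}_{2d}S^{k+1}_k$ vanishes at the tip exactly, not merely up to $o\big((b_{2d})^{2^k}\big)$. This is why the paper's proof does not pause over that point.
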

\begin{proof}
Compare the derivative of $ \La_k \circ H_{k,\; \xi} $ at the tip and the derivative of $ \big({}_{2d}^{}\Psi^{k+1}_{k,\, \xi} \big)^{-1} $ at the origin as follows
\begin{align*} \label{eq-derivative of Psi at origin}
&\begin{pmatrix}
1 & - _{2d}t_{k} \\
& 1 
\end{pmatrix} = 
\begin{pmatrix}
\alpha_{k} & \\
 & \si_{k}
\end{pmatrix}
\begin{pmatrix}
\bullet & - s_k \cdot \di_y \eps_{n,\,\xi_n}(\tau_k)  \\
0 & 1
\end{pmatrix}
\end{align*}
Thus $ _{2d}t_{k} = \alpha_{k} \cdot s_k \cdot \di_y \eps_{n,\,\xi_n}(\tau_k) $ where $ s_k \asymp -1 $. Since by Lemma \ref{Universal Jacobian determinant of Cr Henon map}, 
$$ - \di_y \eps_{n,\,\xi_n}(\tau_k) \asymp - \Jac R^nF_{2d,\,\xi} \asymp - (b_{2d})^{2^k} . $$
Then $ \,\!{}_{2d}t_{k} \asymp - (b_{2d})^{2^k} $ for each $ k \in \N $. 
\end{proof}

\msk

\subsection{Non existence of continuous invariant line field on $ Q_n $}

\begin{lem} \label{No continuous invariant line field of Cr Henon map}
Let $ F_{2d,\, \xi} $ be a $ C^r $ infinitely renormalizable two dimensional H\'enon-like map for $ 2 \leq r < \infty $. 
Then $ F_{2d,\, \xi} $ has no continuous invariant line field over the critical Cantor set. Especially, every invariant line fields are discontinuous at the tip.
\end{lem}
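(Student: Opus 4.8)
The plan is to transfer the classical Carvalho–Lyubich–Martens argument for analytic H\'enon-like maps (Section 7 of \cite{CLM}) to the present $C^r$ setting, using the universality results just established. Recall from Theorem \ref{Universality of Cr Henon maps} that
\[
R^nF_{2d,\,\xi}(x,y) = \big(f_n(x) - (b_{2d})^{2^n} a(x)\, y\, (1+O(\rho^n)),\ x\big),
\]
and from Corollary \ref{Estimation of tilt,t-k} that the tilt of the coordinate change map satisfies $ {}_{2d}t_k \asymp -(b_{2d})^{2^k}$. The key point is that the ``scope maps'' ${}_{2d}^{}\Psi^n_{k,\,\xi}$ given in Theorem \ref{Universal estimation of scaling maps} contract the $x$-direction at rate $\alpha_{n,k}\asymp\sigma^{2(n-k)}$ and the $y$-direction only at rate $\sigma_{n,k}\asymp\sigma^{n-k}$, so the \emph{horizontal} direction is infinitely more strongly contracted than the diagonal direction carrying the tilt; a continuous invariant line field, pulled back through these maps, is forced to align with the horizontal direction near the tip, yet the nontrivial tilt prevents it from being horizontal there. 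This contradiction is the heart of the matter.

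First I would set up the argument by contradiction: suppose $L$ is a continuous $DF_{2d,\,\xi}$-invariant line field on $\OO_{F_{2d,\,\xi}}$. Since the critical Cantor set is the nested intersection of the pieces ${}_{2d}B^n_{\bf w}$ and $F_{2d,\,\xi}$ acts as the dyadic adding machine, I would restrict attention to the tip $\tau_{2d}$ and its subaddress pieces ${}_{2d}B^n_{\bf v}$. Using the conjugacy $R^nF_{2d,\,\xi,\,\tip} = ({}_{2d}^{}\Psi^n_{0,\,\xi,\,\tip})^{-1}\circ F_{2d,\,\xi}^{2^n}\circ {}_{2d}^{}\Psi^n_{0,\,\xi,\,\tip}$, I would pull the line field $L$ restricted to the small piece ${}_{2d}B^n_{\bf v}$ back by $({}_{2d}^{}\Psi^n_{0,\,\xi})^{-1}$ to obtain a line field $L_n$ on the unit-scale domain $B(R^nF_{2d,\,\xi})$; by Theorem \ref{Universal estimation of scaling maps} the derivative $D\,{}_{2d}^{}\Psi^n_{0,\,\xi}$ at the tip is (up to bounded factors) $\operatorname{diag}(\alpha_{n,0},\sigma_{n,0})$ composed with a uniformly bounded unipotent tilt, so a line making angle $\theta_n$ with the horizontal in ${}_{2d}B^n_{\bf v}$ pulls back to a line making angle $\approx \tfrac{\sigma_{n,0}}{\alpha_{n,0}}\tan\theta_n \asymp \sigma^{-n}\tan\theta_n$ with the horizontal at unit scale.

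Next I would exploit continuity of $L$ together with the fact that $\diam({}_{2d}B^n_{\bf v})\le C\sigma^n\to 0$, so the slope of $L$ throughout ${}_{2d}B^n_{\bf v}$ is within $o(1)$ of its value at $\tau_{2d}$; thus $\tan\theta_n = \tan\theta(\tau_{2d}) + o(1)$. If $L(\tau_{2d})$ is not horizontal, then $\tan\theta_n$ is bounded away from $0$, and the blown-up angle $\sigma^{-n}\tan\theta_n\to\infty$, contradicting that line-field angles lie in a compact set — more precisely, it contradicts that the pulled-back field $L_n$ must be a genuine (finite-slope, or at worst vertical) line field on the bounded domain, and in fact by invariance $L_n$ is $DR^nF_{2d,\,\xi}$-invariant, and the limiting degenerate map $F_*(x,y)=(f_*(x),x)$ together with the explicit form of $R^nF_{2d,\,\xi}$ forces $L_n$ to converge to the horizontal field, i.e. $\theta_n/\alpha_{n,0}\cdot\sigma_{n,0}\to 0$. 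Combining, $L(\tau_{2d})$ must be horizontal, i.e. $L(\tau_{2d})$ is the $x$-axis direction. The final step is to derive a contradiction from this: the horizontal direction at the tip is \emph{not} $DF_{2d,\,\xi}^{2^n}$-invariant, because the image of the horizontal vector under $D\,{}_{2d}^{}\Psi^n_{0,\,\xi}$ picks up a component of size $\sigma_{n,0}\cdot {}_{2d}t_{n,0}\asymp (-\sigma)^n (b_{2d})^{2^n}$ relative to the horizontal component $\alpha_{n,0}\asymp\sigma^{2n}$, and the ratio $(b_{2d})^{2^n}/\sigma^n$ does not tend to $0$ (indeed it is unbounded, since $2^n$ grows faster than $n$), so the pulled-back horizontal direction is tilted by an angle that does \emph{not} go to $0$. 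Hence $L(\tau_{2d})$ horizontal is incompatible with continuity of $L$ on the pieces ${}_{2d}B^n_{\bf v}$, giving the contradiction; and the argument localizes the discontinuity at the tip as claimed.

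The main obstacle, and the step requiring the most care, is making the ``blow-up of the tilt'' estimate rigorous in the $C^r$ rather than analytic category: one must verify that the derivative asymptotics of ${}_{2d}^{}\Psi^n_{0,\,\xi}$ from Theorem \ref{Universal estimation of scaling maps} and Corollary \ref{Estimation of tilt,t-k} hold \emph{uniformly on the pieces} ${}_{2d}B^n_{\bf v}$ (not merely at the tip), since $\xi_n$ is only $C^r$ and its contribution enters through $S^n_{k,\xi}$; here I would invoke the $C^1$-convergence in Lemma \ref{Asymptotic non-linear part} and the bound $\|D\xi_n\|\le C\sigma^n$ from Proposition \ref{invariant surfaces on each deep level}, exactly as in the proof of Lemma \ref{Universal Jacobian determinant of Cr Henon map}, to control the error terms uniformly. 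Once that uniformity is in hand, the contradiction between ``horizontal'' (forced by contraction) and ``non-horizontal'' (forced by the nonzero tilt $\asymp -(b_{2d})^{2^k}$) is immediate, and the proof follows the analytic template of \cite{CLM} line for line.
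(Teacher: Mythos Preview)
Your high-level strategy --- establish the $C^r$ analogues of universality (Theorem \ref{Universality of Cr Henon maps}), the scope-map asymptotics (Theorem \ref{Universal estimation of scaling maps}), and the tilt estimate (Corollary \ref{Estimation of tilt,t-k}), then observe that the argument of \cite{CLM} uses only these inputs and hence transfers --- is exactly the paper's approach; the paper simply cites Theorem~9.7 of \cite{CLM} after recording that the ingredients are in place. Your remark that the only genuinely new point is the uniformity of the derivative asymptotics in the $C^r$ category, handled via Proposition \ref{invariant surfaces on each deep level} and Lemma \ref{Asymptotic non-linear part}, is also on target.

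Where your sketch breaks down is in the final contradiction step, where you try to \emph{reproduce} rather than cite the CLM argument. Two concrete errors. First, the horizontal direction is \emph{preserved} by $D\,{}_{2d}^{}\Psi^n_{0,\,\xi}$: from the form in Theorem \ref{Universal estimation of scaling maps} the second row of the derivative is $(0,\si_{n,0})$, so $(1,0)$ maps to a horizontal vector and picks up no vertical component. Second, and decisively, the ratio $(b_{2d})^{2^n}/\si^n$ tends to $0$, not to infinity: both $b_{2d}$ and $\si$ lie in $(0,1)$, so $2^n\gg n$ means the numerator decays \emph{faster}, not slower. Hence the contradiction you describe does not occur. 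The actual CLM mechanism is different: under $D\,{}_{2d}^{}\Psi^n_{0,\,\xi}$ any \emph{non}horizontal direction of slope $m$ at level $n$ is sent to slope approximately $1/\,{}_{2d}t_{*,0}$ at level $0$ (because $\alpha_{n,0}/\si_{n,0}\asymp\si^{n}\to 0$ kills the $\alpha_{n,0}$ term in the denominator), independently of $m$; the contradiction then arises by comparing two routes to the tip through different combinatorial addresses, one factoring through an application of $R^kF$, which force incompatible limiting directions. If you wish to flesh out the proof rather than cite it, that is the computation to carry through.
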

\begin{proof}
Universality Theorem \ref{Universality of Cr Henon maps} and the estimation of scaling map, $ \Psi^n_k $ in Theorem \ref{Universal estimation of scaling maps} imply the universal expression of H\'enon-like maps and of horizontal map similar to those of analytic ones. Then the proof discontinuity of invariant line field is essentially the same as the proof of Theorem 9.7 in \cite{CLM}. 
\end{proof}

\begin{thm}
\label{Discontinuity of invariant line field for small perturbation}
Let $ F \in \II(\bar \eps) $ be a sufficiently small perturbation of toy model map with $ b_2 \ll b_1 $. Let $ Q $ be an invariant surface under $ F $ which is tangent to the continuous invariant field, say $ E $, over $ \OO_F $. Then any invariant line field in $ E $ over $ \OO_F $ is discontinuous at the tip.
\end{thm}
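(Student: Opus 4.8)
The plan is to reduce the statement to Lemma~\ref{No continuous invariant line field of Cr Henon map} by transporting the line field along the $C^r$ conjugation $\pi_{xy}^{\xi}$ between $F|_Q$ and the conjugated two dimensional map $F_{2d,\,\xi}$. First I would record the structure supplied by the earlier sections: since $F$ is a sufficiently small perturbation of a toy model map with $b_2 \ll b_1$, the splitting $T_{\OO_F}B = E^{ss}\oplus E^{pu}$ is $r$-dominated for every finite $r\geq 2$ by pseudo-hyperbolicity, so the invariant surface $Q$ tangent to $E = E^{pu}$ over $\OO_F$ is a $C^r$ surface and, the strong stable leaves being the lines perpendicular to the $xy$-plane, it is the graph of a $C^r$ function over a region of the $xy$-plane. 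Hence $\pi_{xy}^{\xi}$ is a $C^r$ diffeomorphism from $Q$ onto its image; by Propositions~\ref{invariant surfaces on each deep level} and~\ref{2d scaling map of Cr conjugation} and Definition~\ref{definition of renormalizable Cr Henon map} it conjugates $F|_Q$ to the infinitely renormalizable $C^r$ H\'enon-like map $F_{2d,\,\xi}$, and it carries $\OO_F$ onto $\OO_{F_{2d,\,\xi}}$ and the tip $\tau_F$ onto the tip $\tau_{F_{2d,\,\xi}}$.

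Next I would set up the correspondence of line fields. Let $L$ be an arbitrary invariant line field in $E$ over $\OO_F$, so $L(w)\subset E_w = T_wQ$ and $DF_w(L(w)) = L(F(w))$ for $w\in\OO_F$. Differentiating the conjugacy $\pi_{xy}^{\xi}\circ F|_Q = F_{2d,\,\xi}\circ \pi_{xy}^{\xi}$ gives $D(\pi_{xy}^{\xi})_{F(w)}\circ DF_w = D(F_{2d,\,\xi})_{\pi_{xy}^{\xi}(w)}\circ D(\pi_{xy}^{\xi})_w$ on $T_wQ$, so the push-forward $\widetilde L\big(\pi_{xy}^{\xi}(w)\big) := D(\pi_{xy}^{\xi})_w\big(L(w)\big)$ is a $DF_{2d,\,\xi}$-invariant line field over $\OO_{F_{2d,\,\xi}}$. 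Because $r\geq 2$, the bundle map $w\mapsto D(\pi_{xy}^{\xi})_w$ is continuous and invertible on each fiber, and $\pi_{xy}^{\xi}$ is a homeomorphism $\OO_F\to\OO_{F_{2d,\,\xi}}$ taking $\tau_F$ to $\tau_{F_{2d,\,\xi}}$; therefore $L$ is continuous at $\tau_F$ if and only if $\widetilde L$ is continuous at $\tau_{F_{2d,\,\xi}}$.

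Finally, Lemma~\ref{No continuous invariant line field of Cr Henon map} asserts that the infinitely renormalizable $C^r$ H\'enon-like map $F_{2d,\,\xi}$ has no continuous invariant line field over its critical Cantor set and that every invariant line field is discontinuous at its tip. Applied to $\widetilde L$ this forces $\widetilde L$ to be discontinuous at $\tau_{F_{2d,\,\xi}}$, whence by the equivalence above $L$ is discontinuous at $\tau_F$. Since $L$ was arbitrary, the theorem follows. I expect the only delicate point to be bookkeeping rather than analysis: one must be sure that the continuous plane field $E$ in the hypothesis genuinely coincides with $TQ$ over $\OO_F$, so that an invariant line field ``in $E$'' is literally tangent to the $C^r$ surface $Q$ and can be pushed forward by $D\pi_{xy}^{\xi}$, and that the regularity $r\geq 2$ needed to invoke Lemma~\ref{No continuous invariant line field of Cr Henon map} is available for this particular $Q$ --- both guaranteed by the hypothesis $b_2\ll b_1$ together with the construction of $Q$ recalled above. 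Once this is in place the argument is a pure transfer along a $C^1$ conjugation and needs no new estimates.
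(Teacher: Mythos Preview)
Your proposal is correct and follows essentially the same route as the paper: the paper's proof simply cites Theorem 7.8 in \cite{Nam1} together with Lemma~\ref{No continuous invariant line field of Cr Henon map}, and the detailed argument behind that citation (visible in the commented-out passage in the source) is exactly the push-forward of the line field along the $C^r$ conjugation $\pi_{xy}^{\xi}$ between $F|_Q$ and $F_{2d,\,\xi}$, reducing the question of continuity at $\tau_F$ to the two-dimensional lemma. Your bookkeeping about $E=T_{\OO_F}Q$ and the needed regularity $r\geq 2$ is appropriate and matches the paper's standing assumptions.
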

\begin{proof}
The proof is the same as that of Theorem 7.8 in \cite{Nam1} with the above Lemma \ref{No continuous invariant line field of Cr Henon map}.
\comm{*****************
We may assume that the map $ F $ with $ b_2 \ll b_1 $ has an invariant surface $ Q = \text{graph}(\xi) $. Let $ P $ be the domain of the two dimensional H\'enon-like map, $ F_{2d,\, \xi} $. 
For the notational simplicity, we suppress $ \xi $ in the notation of two dimensional map in this proof. For example, $ F_{2d,\, \xi} = F_{2d} $. Let $ T_{\OO_{F_{2d}}}P $ be the tangent bundle over the critical Cantor set of $ DF_{2d} $. For the same reason, let us express the graph map $ (x,y) \mapsto (x,y, \xi) $ to be just $ \xi $. For each point $ w \in \OO_{F_{2d}} $, let us assume that $ T_{\OO_{F_{2d}}}P $ is decomposed into the invariant subbundles $E^1_{2d} \oplus E^2_{2d} $ under $DF_{2d}$. Then $ T_{\OO_F}Q $ has the splitting with invariant subbundles, $E^1 \oplus E^2 $ under $ DF|_{\;Q} $. Since $ Q $ is a $ C^r $ invariant surface under $F$ and it contains the critical Cantor set $ \OO_F $, the following diagram is commutative
\begin{displaymath}
\xymatrix @C=1.5cm @R=1.5cm
 {
T_{\OO_{F_{2d}}}P  \ar[d]_*+ {\pi}  \ar[r]^*+{(D\xi, \xi)}    & T_{\OO_F}Q \ar[d]^*+{\pi'}   \\
 \OO_{F_{2d}} \ar[r]^*+{ \xi}      & \OO_F
   }
\end{displaymath}
where the tangent map is defined as $ (D\xi, \xi)(v, w) = (D\xi(w)\cdot v,\; \xi(w)) $ for each $ (v,w) \in T_{\OO_{F_{\xi}}}P $ and both $ \pi $ and $ \pi' $ are the projections from the bundle to the base space, that is, for each $ (v, w) \in \text{bundle} $, $ \pi(v, w) = w $ and $ \pi'(v, w) = w $ respectively. 
\ssk \\
The image of any invariant tangent subbundle of $ T_{\OO_{F_{2d}}}P $ is an invariant subbundle of $ T_{\OO_F}Q $. Thus without loss of generality, we may assume that $ (D\xi, \xi)(E^1_{2d}) = E^1 $. Let $ \gamma $ and $ \gamma' $ be the invariant sections under $ F_{2d} $ and $ F|_{\,Q} $ respectively.
\begin{displaymath}
\xymatrix @C=1.5cm @R=1.5cm
 {
E^1_{2d}  \ar@{<-}[d]_*+ {\gamma}  \ar[r]^*+{(D\xi, \xi)}    & E^1 \ar@{<-}[d]^*+{\gamma'}   \\
 \OO_{F_{2d}} \ar[r]^*+{ \xi}      & \OO_F
   }
\end{displaymath}
Since $ \xi $ is $ C^r $ function, the tangent map $ (D\xi, \xi) $ is continuous at $ (v, w) \in E^1_{2d} $. Then the section $ \gamma $ is continuous if and only if $ \gamma' $ is continuous because $ \xi $ is a diffeomorphism. However, any invariant line field under $ DF_{2d} $ on the Cantor set $ \OO_{F_{2d}} $ is not continuous at the tip, $ \tau_{F_{2d}} $ by Lemma \ref{No continuous invariant line field of Cr Henon map}. Hence, there is no continuous invariant line field under $ DF|_{\,Q} $ on any $ C^r $ invariant surface $ Q $ under $ F $.
*******************************}
\end{proof}


\nin 
The geometric properties of critical Cantor set --- non existence of continuous invariant line field 
and unbounded geometry of critical Cantor set --- are showed in the invariant surface. These negative results on the invariant surfaces are also valid on three dimensional analytic H\'enon-like maps in no time.


\bsk

\section{Density of conjugated maps in $ C^r $ H\'enon-like maps}
The renormalization for analytic H\'enon-like map is extended to $ C^r $ H\'enon-like maps by invariant $ C^r $ single surfaces of analytic three dimensional map. We would show that the set of $ C^r $ H\'enon-like maps from invariant surfaces is open and dense in $ C^r $ infinitely renormalizable H\'enon-like maps in the parameter space of average Jacobian for any given $ 2 \leq r < \infty $ (Theorem \ref{thm-open dense subset of Cr Henon-like maps}).

\begin{lem} \label{lem-existence of Cr invariant surface, openness}
Let $ F_{\mod} \in \II(\bar \eps) $ be the infinitely renormalizable toy model three dimensional H\'enon-like map. Assume that $ b_2 \ll b_1 $ and there exist invariant $ C^r $ single surfaces which are tangent to $ E^{pu} $ over the critical Cantor set, $ \OO_{F_{\mod}} $ and these surfaces is the graph of $ C^r $ map from $ I^x \times I^y $ to $ I^z $. Let a sufficiently small perturbation of $ F_{\mod} $ with parameter $ t $ as follows
\begin{equation} \label{eq-small perturbation of 3d map with t}
F_t(x,y,z) = (f(x) - \eps(x,y) +tz,\ x,\ \de(x,y,z)) 
\end{equation}
for small enough $ |t| $. Then $ F_t $ has also invariant $ C^r $ single surfaces tangent to $ E^{pu} $ over its critical Cantor set.
\end{lem}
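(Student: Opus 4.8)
The plan is to exploit the fact that $F_t$ is an exceptionally mild perturbation of $F_{\mod}$ — one that leaves the $(x,y)$-dynamics literally unchanged — and to transport the invariant--surface structure of $F_{\mod}$ to $F_t$ via the $C^1$-robustness of Theorem~\ref{Existence of invariant submanifold} already recorded after that theorem. Write $F_t(x,y,z) = (f(x) - \eps_t(x,y,z),\, x,\, \de(x,y,z))$ with $\eps_t(w) = \eps(x,y) - tz$, so that $F_t$ is a perturbation of $F_{\mod}$ in the sense of \eqref{perturbation of model maps infty} with $\widetilde\eps(w) = -tz$ and $\|F_t - F_{\mod}\|_{C^k} = O(|t|)$ for every $k$. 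Since the added term $-tz$ has vanishing $x$- and $y$-derivatives, $\di_x\eps_t = \di_x\eps$ and $\di_y\eps_t = \di_y\eps$, so in the block form \eqref{matrix symbolic expression of DF infty} the blocks $A = A_1 = DF_{2d}$, $C = (\di_x\de,\ \di_y\de)$ and $D = \di_z\de$ of $DF_t$ coincide with those of $DF_{\mod}$; the only block that changes is $B$, whose entries are $\di_z\eps_t = -t$ and $0$, so $\|B\| = |t|$. Moreover $F_t$ has the same base unimodal map $f$ as $F_{\mod}$ and $\|\eps_t\|,\|\de\| = O(\bar\eps)$ for $|t|$ small, so $F_t$ is again infinitely renormalizable, $F_t \in \II(\bar\eps)$, and $\OO_{F_t}$ is defined.

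Next I would verify the hypotheses of Lemma~\ref{Invariance of cone filed perturbation infty} for $F_t$. The inequality $\|D\| \le \tfrac{\rho_1}{2}\, m(A_1)$ is literally the one holding for $F_{\mod}$, since $D$ and $A_1$ are unchanged. For the product condition, $\|B\|\,\|C\| = |t|\cdot O(\bar\eps)$, while $m(A)\, m(D)$ is bounded below by a positive constant depending only on $F_{\mod}$ (as $F_{2d}$ is a diffeomorphism and $\di_z\de \asymp b_2 > 0$); hence there is $t_0>0$ with $\|B\|\,\|C\| \le \rho_0\, m(A)\, m(D)$ whenever $|t| \le t_0$. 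Lemma~\ref{Invariance of cone filed perturbation infty} then yields a dominated splitting $T_{\OO_{F_t}}B = E^{ss}_t \oplus E^{pu}_t$ for $F_t$ with $E^{ss}_t$ uniformly contracted. The stronger condition $\sup_w \|D_w\|/m(A_w)^r \le \tfrac12$ that upgrades this to an $r$-dominated splitting involves only the (unchanged) blocks $A$ and $D$, so it persists verbatim; thus the splitting of $F_t$ is again $r$-dominated and the surface produced below will be $C^r$.

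It remains to apply Theorem~\ref{Existence of invariant submanifold} (in its $C^r$ form, valid under $r$-dominated splitting), for which one needs the strong stable leaves of $E^{ss}_t$ to meet $\OO_{F_t}$ in a single point. Since $\|B\| = |t|$ is small, the $(x,y)$-components of $F_t$ depend only weakly on $z$, so the strong stable foliation of $F_t$ is $C^1$-close to the vertical foliation \eqref{eq-invariant perpendicular lines} that plays this role for $F_{\mod}$; hence the single-intersection property persists, either directly from the $C^1$-robustness of Theorem~\ref{Existence of invariant submanifold} recalled above, or by rerunning the accumulation argument of Lemma~\ref{invariant surface containing Per and Cantor set}: if two distinct points of $\OO_{F_t}$ lay on one strong stable leaf, the Inclination Lemma together with the analogue of Lemma~\ref{transversal intersection at a single point} for $F_t$ would force an invariant surface tangent to $E^{pu}_t$ to accumulate on itself, contradicting Theorem~\ref{Existence of invariant submanifold}. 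Theorem~\ref{Existence of invariant submanifold} then gives a locally invariant $C^r$ submanifold $Q_t$ tangent to $E^{pu}_t$ over $\OO_{F_t}$; and since $E^{pu}_t$ makes an angle bounded away from $\tfrac{\pi}{2}$ with $T\R^2$ while $E^{ss}_t$ stays near-vertical, the same graph argument used for $F_{\mod}$ shows that $Q_t$ is the graph of a $C^r$ function from a region in $I^x\times I^y$ to $I^z$.

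I expect the main obstacle to be precisely this uniqueness of the strong--stable--leaf intersections: for $F_{\mod}$ it was free because $W^{ss}$ was the exact vertical foliation and Lemma~\ref{transversal intersection at a single point} applied directly, whereas for $F_t$ this foliation is only $C^1$-close to vertical, so one must either lean on the $C^1$-robustness statement quoted from \cite{BC} or push the Inclination--Lemma argument through with the tilted foliation and control the errors in $|t|$. A secondary, more bookkeeping point is confirming that $F_t$ remains infinitely renormalizable so that ``$\OO_{F_t}$'' is meaningful, which follows from persistence of infinite renormalizability for H\'enon-like maps sharing the fixed renormalizable base map $f$ as long as $\|\eps\|$ stays small.
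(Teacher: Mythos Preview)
Your approach is essentially the same as the paper's: invoke the cone-field persistence (Lemma~\ref{Invariance of cone filed perturbation infty}, i.e.\ Lemma~7.4 in \cite{Nam1}) for the perturbed map and then feed the resulting dominated splitting into the single-invariant-surface machinery of Section~3. The paper's own proof is a terse two-sentence citation of exactly these ingredients, whereas you unpack them --- verifying the block-matrix hypotheses explicitly, noting that only $B$ changes, and spelling out the single-intersection and graph arguments --- but the logical skeleton is identical.
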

\begin{proof}
The existence of invariant cone fields of $ DF_{\mod} $ and a small perturbation of $ DF_{\mod} $ by Lemma 7.3 and Lemma 7.4  in \cite{Nam1}. Existence of single invariant surfaces for $ F_{\mod} $ is due to Section 3.   
\end{proof}
\nin Denote an invariant single surface of $ F_t $ by $ graph(\xi_t) $ where $ \xi $ is the $ C^r $ map from $ I^x \times I^y $ to $ I^z $. Thus the $ C^r $ H\'enon-like map from invariant surface, $ \pi_{xy} \circ F_t \,|_{graph(\xi_t)} \equiv F_{2d,t} $ is defines as follows
\begin{equation} \label{eq-small perturbation of model map with one parameter}
F_{2d,t} (x,y) = (f(x) - \eps(x,y) + t\:\! \xi_t(x,y),\ x) .
\end{equation}
\nin Let $ F_{2d} $ be a $ C^r $ H\'enon-like map. The unimodal part $ f $ of the following map
$$ F_{2d}(x,y) = (f(x) - \eps^{(r)}(x,y),\ x) $$
can be approximated arbitrary closely by analytic maps in $ C^r $ topology. Then we may assume that $ f $ is analytic and $ \eps^{(r)}(x,y) $ is $ C^r $. Moreover, two variable $ C^r $ map can be also approximated by analytic maps, for instance, multivariate Bernstein polynomials in $ C^{r} $ topology. See \cite{Kin}. Any analytic H\'enon-like maps in $ \II(\bar \eps) $ can be approximated by maps in \eqref{eq-small perturbation of model map with one parameter}. 
%
\begin{lem} \label{lem-a dense subset of Cr Henon-like maps}
The set of two dimensional $ C^r $ H\'enon-like map in \eqref{eq-small perturbation of model map with one parameter} is a dense subset of two dimensional $ C^r $ H\'enon-like map in $ \II(\bar \eps) $ for $ 2 \leq r < \infty $. 
\end{lem}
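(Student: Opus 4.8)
The plan is to prove density by reducing to a single known fact, namely that analytic infinitely renormalizable maps are $C^r$-dense in $\II(\bar\eps)$, and to observe that the $t=0$ slice of the family \eqref{eq-small perturbation of model map with one parameter} already exhausts the analytic locus. Concretely: given an analytic $G(x,y)=(g(x)-\eta(x,y),\ x)\in\II(\bar\eps)$, adjoin a linearly contracting third coordinate and set $F_{\mod}(x,y,z)=(g(x)-\eta(x,y),\ x,\ b_2 z)$; since $\eps=\eta$ is independent of $z$ this is a toy model, and we choose $b_2>0$ so small that $b_2\ll b_1:=b_G$ and, moreover, that $F_{\mod}$ has $r$-dominated splitting over its critical Cantor set. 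Then $F_{2d,0}$ from \eqref{eq-small perturbation of model map with one parameter} equals $G$, so $G$ itself belongs to the family; granting the analytic-density fact, every $C^r$ map in $\II(\bar\eps)$ is a $C^r$-limit of such $G$'s, and the lemma follows.

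For the realization $F_{2d,0}=G$ I would argue as follows. The $z$-axis is a uniform eigendirection of $DF_{\mod}$ with eigenvalue $b_2$, the strong stable manifold of each point of $\OO_{F_{\mod}}$ is the vertical fibre through it, and, since $\de(y,f^{-1}(y),0)\equiv 0$, the horizontal-like map is trivial in the $z$-coordinate; hence renormalization of $F_{\mod}$ acts on the first two coordinates exactly as renormalization of $G$ and contracts the third by $\asymp b_2^2$, so $F_{\mod}$ is an infinitely renormalizable toy model in $\II(\bar\eps)$ with $b_{2}\ll b_{1}$ persisting at every level, and Proposition \ref{renormalization of toy model} applies. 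The plane $\{z=0\}$ is invariant under $F_{\mod}$ and meets each vertical fibre at exactly one point, so by Lemma \ref{invariant surface containing Per and Cantor set} it is the invariant $C^r$ surface through $\OO_{F_{\mod}}\cup\overline{\Per}_{F_{\mod}}$, i.e. $\xi\equiv 0$; taking $t=0$ in \eqref{eq-small perturbation of 3d map with t}, equation \eqref{eq-small perturbation of model map with one parameter} becomes $F_{2d,0}(x,y)=(g(x)-\eta(x,y),\ x)=G(x,y)$. (By Lemma \ref{lem-existence of Cr invariant surface, openness} together with a uniform $C^r$ bound on $\xi_t$ coming from Proposition \ref{invariant surfaces on each deep level} and $C^1$-robustness, one also has $F_{2d,t}\to G$ in $C^r$ as $t\to 0$, and each $F_{2d,t}$ is $C^r$ infinitely renormalizable by Theorem \ref{Universality of Cr Henon maps}, so the family is genuinely a subset of $\II(\bar\eps)$ whose non-analytic members accumulate on the analytic locus as well.)

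The remaining and genuinely substantial step is to show that analytic maps are $C^r$-dense in $\II(\bar\eps)$. Writing a $C^r$ map as $F_{2d}(x,y)=(f(x)-\eps^{(r)}(x,y),\ x)$, one approximates $f$ in the $C^r$ norm by an analytic unimodal map and $\eps^{(r)}$ in the $C^r$ norm by an analytic function, for instance a multivariate Bernstein polynomial (see \cite{Kin}); these approximations are routine. The delicate point --- and the main obstacle --- is to carry them out without leaving the infinitely renormalizable class: near the degenerate analytic renormalization fixed point $F_*=(f_*(x),\ x)$, the set $\II(\bar\eps)$ is the codimension-one stable manifold $W^s$ of the hyperbolic renormalization operator (cf. \cite{CLM}), and the approximant must be kept on $W^s$. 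I would handle this by perturbing as above and then projecting back onto $W^s$ along the one-dimensional unstable direction; since analytic maps are dense in the ambient $C^r$ space and this holonomy projection is $C^r$ and fixes $F_*$, it yields analytic maps in $\II(\bar\eps)$ arbitrarily $C^r$-close to $F_{2d}$, still with $\|\eps\|=O(\bar\eps)$ and small average Jacobian, so the choice of $b_2$ in the first part is unaffected. Chaining the two parts completes the proof; the toy-model realization and the polynomial approximations are comparatively soft, whereas the preservation of infinite renormalizability under $C^r$-approximation by analytic maps is the step that requires the renormalization-theoretic input.
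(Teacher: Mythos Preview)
Your approach is essentially the same as the paper's: both reduce the lemma to (i) $C^r$-density of analytic H\'enon-like maps, via multivariate Bernstein approximation as in \cite{Kin}, and (ii) the observation that every analytic $G\in\II(\bar\eps)$ already sits in the family \eqref{eq-small perturbation of model map with one parameter}. The paper records this reasoning in the paragraph immediately preceding the lemma rather than in a formal proof, and phrases (ii) as ``analytic maps can be approximated by maps in \eqref{eq-small perturbation of model map with one parameter}''; your version is sharper and more constructive --- you exhibit a concrete toy model $F_{\mod}=(g(x)-\eta(x,y),\,x,\,b_2 z)$ with $\{z=0\}$ invariant, so that the $t=0$ member is literally $G$.

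One point you raise that the paper's sketch does not is how to keep the analytic approximant inside $\II(\bar\eps)$. Your proposed fix (project along the one-dimensional unstable direction onto the analytic stable manifold of $R$) is the natural move, but note that the paper itself flags, in the Notes after Theorem~\ref{thm-open dense subset of Cr Henon-like maps}, that hyperbolicity of $R$ in the $C^r$ category is not established; so your projection argument tacitly uses that a $C^r$ infinitely renormalizable map is $C^r$-close to the analytic stable manifold, which is precisely the input the paper leaves open. This is not a flaw relative to the paper --- you are making explicit a gap the paper glosses over --- but it is worth being aware that this step is where the real content lies.
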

\begin{lem} \label{lem-Continuous moving of Cantor set with t}
The critical Cantor set, $ \OO_{F_{2d}} $ of two dimensional $ C^r $ H\'enon-like map moves continuously as $ F_{2d} $ in infinitely H\'enon-like maps. 
\end{lem}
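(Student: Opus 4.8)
The plan is to measure the critical Cantor set in the Hausdorff metric $\Hdist$ on compact subsets of the plane and to exploit two ingredients: the exponentially fast shrinking of the pieces $B^n_{{\bf w}}$, which is uniform over $\II(\bar\eps)$, and the continuous dependence of each fixed finite renormalization on the underlying map. First I would recall that $\OO_{F_{2d}} = \bigcap_{n\ge 1}\bigcup_{{\bf w}\in W^n} B^n_{{\bf w}}$, that $\diam(B^n_{{\bf w}})\le C\si^n$ with $C$ and $\si<1$ depending only on $\bar\eps$ by \eqref{eq-diameter of box Bn} (this is where uniformity over a neighborhood in $\II(\bar\eps)$ enters), and that each piece $B^n_{{\bf w}}$ meets $\OO_{F_{2d}}$ since it contains the unique periodic point of period $2^n$, which lies in the Cantor set. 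Setting $U_n(F_{2d}) := \bigcup_{{\bf w}\in W^n} B^n_{{\bf w}}$, this gives $\Hdist\big(\OO_{F_{2d}},\, U_n(F_{2d})\big)\le C\si^n$: every point of $\OO_{F_{2d}}$ lies in some piece, and every piece lies within $C\si^n$ of a point of $\OO_{F_{2d}}$.

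Next I would establish that, for each fixed $n$, the finite union $U_n(F_{2d})$ depends continuously on $F_{2d}$ in the Hausdorff metric. Each piece is $B^n_{{\bf w}} = \Psi^n_{0,{\bf w}}\big(B(R^nF_{2d})\big)$, the word-length-$n$ composition of the scaling maps $\psi^{k}_{v}$, $\psi^{k}_{c}$ applied to the domain of $R^nF_{2d}$. The iterated renormalizations $R^kF_{2d}$ for $k\le n$ depend continuously on $F_{2d}$ in the $C^r$ topology, since renormalization is built from composition with the horizontal map $H$, the linear scaling $\La$, and $F^2$, all of which are operations continuous in $C^r$ on the relevant domains (for the $C^r$ Hénon-like maps under consideration, which arise as conjugates of analytic three-dimensional maps, this continuity is also inherited from the analytic theory together with the smooth-conjugation machinery of Proposition \ref{2d scaling map of Cr conjugation} and Theorem \ref{Universality of Cr Henon maps}). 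Consequently the horizontal diffeomorphisms $H_k$, the constants $s_k$, the maps $\psi^{k}_{w}$, and the box domains $B(R^kF_{2d})$ all depend continuously on $F_{2d}$; hence so does the image of $B(R^nF_{2d})$ under the finite composition $\Psi^n_{0,{\bf w}}$, and therefore so does the finite union $U_n(F_{2d})$.

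Finally I would combine the two steps by the triangle inequality for $\Hdist$. Given $\eps>0$, first choose $N$ with $C\si^N<\eps/3$ (legitimate by the uniformity of $C$ and $\si$ on a neighborhood inside $\II(\bar\eps)$), then choose a neighborhood of $F_{2d}$ on which $\Hdist\big(U_N(F_{2d}),U_N(G_{2d})\big)<\eps/3$ by the continuity just established. For any $G_{2d}$ in that neighborhood,
\[
\Hdist(\OO_{F_{2d}},\OO_{G_{2d}}) \le \Hdist(\OO_{F_{2d}},U_N(F_{2d})) + \Hdist(U_N(F_{2d}),U_N(G_{2d})) + \Hdist(U_N(G_{2d}),\OO_{G_{2d}}) < \eps,
\]
which proves the asserted continuity. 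The main obstacle is precisely the uniformity issue flagged above: one must know that the constant $C$ and the exponent $\si<1$ governing $\diam(B^n_{{\bf w}})$ can be taken uniform over a neighborhood, otherwise the choice of $N$ could degenerate; this is guaranteed because all the estimates of \S\ref{preliminaries} hold with constants depending only on $\bar\eps$, and the neighborhood can be chosen inside $\II(\bar\eps)$. A secondary technical point is the $C^r$-continuity of the renormalization operator itself, which for analytic maps is classical and in the present $C^r$ setting follows from the explicit formula for $R$ together with the conjugation results of the previous section.
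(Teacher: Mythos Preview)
Your approach is correct and follows essentially the same strategy as the paper: approximate the Cantor set by finite-level data that depends continuously on the map, and use the uniform exponential shrinking of the pieces to pass to the limit. The paper phrases this slightly differently --- it tracks the periodic point of period $2^n$ in each piece (continuous, indeed $C^r$, in the map by the Implicit Function Theorem) rather than the boxes themselves, and it secures the uniformity of the box-diameter bound by observing that the two-dimensional boxes are the $\pi_{xy}$-projections of the three-dimensional ones, for which the estimate \eqref{eq-diameter of box Bn} is already established. Your Hausdorff-metric formulation with the finite unions $U_n$ as approximants is equally valid and arguably more explicit.

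One small correction: you justify that each $B^n_{{\bf w}}$ meets $\OO_{F_{2d}}$ by saying it ``contains the unique periodic point of period $2^n$, which lies in the Cantor set.'' That periodic point is \emph{not} in $\OO_{F_{2d}}$; by Lemma~\ref{limit points of periodic points} the Cantor set is the set of \emph{accumulation} points of $\Per_F$, and the period-$2^n$ points themselves are isolated in $\overline{\Per_F}$. The conclusion you need is nonetheless true for a different reason: each $B^n_{{\bf w}}$ contains the nested chain $B^{n+1}_{{\bf w}v}\supset B^{n+2}_{{\bf w}vv}\supset\cdots$, whose intersection is a point of $\OO_{F_{2d}}$, so every piece does meet the Cantor set. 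With this fix your argument goes through unchanged.
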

\begin{proof}
By construction of the critical Cantor set, for a given word $ {\bf w}_n \in W^n $, the unique periodic point $ w_n $ with period $ 2^n $ of the region $ B^n_{{\bf w}_n} $ is $ C^r $ by Implicit Function Theorem. Each point $ w \in \OO_F $ is the limit of $ w_n $ as $ n \ra \infty $ for the given word $ {\bf w} \in W^{\infty} $ which contains $ {\bf w}_n $ as a finite subaddress of $ {\bf w} $ for every $ n \in \N $. Since two dimensional box, $ B^n_{{\bf w}_n}(F_{2d}) $ is $ \pi_{xy}\big(B^n_{{\bf w}_n}(F) \big) $ of three dimensional map $ F $, the uniform convergence of three dimensional boxes as $ n \ra \infty $ implies that of two dimensional ones. Then the critical Cantor set moves continuously  as $ F_{2d} $.  
\end{proof}
\msk
\nin Recall the maps in \eqref{eq-small perturbation of 3d map with t} and \eqref{eq-small perturbation of model map with one parameter} for $ |t| < r $ where $ r $ is sufficiently small such that 
\begin{enumerate}
\item For every $ |t| < r $, there exist single invariant surfaces tangent to $ E^{pu} $ over the critical Cantor set as the graph from $ I^x \times I^y $ to $ I^z $. \ssk
\item $ \Jac F_{2d,\,t} $ is positive on $ (-r, r) \times B $. 
\end{enumerate}
\begin{cor} \label{cor-continuous moving of average Jacobian}
The average Jacobian $ b_{2d,t} \equiv b(F_{2d,t}) $ for $ |t| < r $ moves continuously on $ t $ for sufficiently small $ r>0 $. 
\end{cor}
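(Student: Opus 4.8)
The plan is to show that $t \mapsto b_{2d,t}$ is continuous by writing the average Jacobian as an integral and controlling both the integrand and the measure of integration simultaneously. Recall that
$$
b_{2d,t} = \exp \int_{\OO_{F_{2d,t}}} \log \Jac F_{2d,t}\, d\mu_{2d,t},
$$
so it suffices to prove that the quantity $I(t) \equiv \int_{\OO_{F_{2d,t}}} \log \Jac F_{2d,t}\, d\mu_{2d,t}$ depends continuously on $t$ for $|t| < r$; then $b_{2d,t}$ is continuous as the composition with $\exp$. The three ingredients I would assemble are: (i) the map $(t,w) \mapsto \Jac F_{2d,t}(w) = \di_y\big(\eps(x,y) - t\,\xi_t(x,y)\big)$ is jointly continuous in $(t,w)$ and, by condition (2) in the hypotheses, is bounded below by a positive constant and above on the compact set $(-r,r)\times B$, so $\log \Jac F_{2d,t}$ is uniformly bounded and jointly continuous; (ii) by Lemma \ref{lem-Continuous moving of Cantor set with t}, the critical Cantor set $\OO_{F_{2d,t}}$ moves continuously (in the Hausdorff metric) as $t$ varies; (iii) the invariant measure $\mu_{2d,t}$ is the pushforward to $\OO_{F_{2d,t}}$ of the dyadic adding-machine measure under the coding map, and this coding map also moves continuously with $t$ because each periodic point $w_n({\bf w}_n)$ is $C^r$ in $t$ by the Implicit Function Theorem (the same argument used in Lemma \ref{lem-Continuous moving of Cantor set with t}).

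The concrete argument I would write: fix a word length $n$ and partition $\OO_{F_{2d,t}}$ into the $2^n$ pieces $\OO_{F_{2d,t}} \cap B^n_{{\bf w}_n}(F_{2d,t})$, each carrying mass $2^{-n}$. Because $\diam B^n_{{\bf w}_n} \le C\si^n$ uniformly (equation \eqref{eq-diameter of box Bn}, valid uniformly in $t$ since the estimate is robust under small perturbation), and because $\log \Jac F_{2d,t}$ is uniformly continuous on the compact domain, the Riemann-type sum
$$
I_n(t) \equiv \sum_{{\bf w}_n \in W^n} 2^{-n}\, \log \Jac F_{2d,t}\big(w_n({\bf w}_n, t)\big)
$$
converges to $I(t)$ uniformly in $t$ as $n \to \infty$, where $w_n({\bf w}_n,t)$ is (say) the periodic point of address ${\bf w}_n$. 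For each fixed $n$, $I_n(t)$ is a finite sum of continuous functions of $t$ — the periodic points $w_n({\bf w}_n,t)$ depend continuously on $t$ by the Implicit Function Theorem, and $\log \Jac F_{2d,t}$ is jointly continuous — hence $I_n$ is continuous. A uniform limit of continuous functions is continuous, so $I$ is continuous, and therefore so is $b_{2d,t} = \exp I(t)$.

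The main obstacle is making rigorous the claim that the invariant measure $\mu_{2d,t}$ varies continuously, specifically that integrating against it can be uniformly approximated by the equidistributed sums $I_n(t)$ with an error that is independent of $t$. This requires two uniformities: first, that the constant $C$ and rate $\si$ in $\diam B^n_{{\bf w}_n} \le C\si^n$ can be chosen uniformly over $|t| < r$ (which follows from the $C^1$-robustness of the renormalization estimates and the fact that shrinking $r$ only helps); and second, that the modulus of continuity of $\log \Jac F_{2d,t}$ can be taken uniform in $t$ (which follows from joint continuity on the compact set $\overline{(-r,r)}\times \overline{B}$ together with the positive lower bound on $\Jac F_{2d,t}$ from hypothesis (2)). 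Once these two uniform bounds are in hand, the argument that $I_n \to I$ uniformly is a routine estimate
$$
|I(t) - I_n(t)| \le \sum_{{\bf w}_n} 2^{-n}\, \omega\big(C\si^n\big) = \omega\big(C\si^n\big) \xrightarrow[n\to\infty]{} 0,
$$
where $\omega$ is the common modulus of continuity, and continuity of $b_{2d,t}$ follows.
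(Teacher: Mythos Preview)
Your proof is correct and follows the same outline as the paper: write $b_{2d,t}$ as $\exp\int_{\OO_t}\log\Jac F_{2d,t}\,d\mu_t$, invoke Lemma~\ref{lem-Continuous moving of Cantor set with t} for the continuous motion of $\OO_t$, and conclude that the integral varies continuously in $t$. The paper's own argument stops essentially at that point, whereas you supply the missing justification---the uniform Riemann-sum approximation via periodic points and the uniform modulus of continuity of $\log\Jac F_{2d,t}$---that explains \emph{why} continuous motion of the Cantor set (and hence of the coding and the measure) actually forces continuity of the integral; this is a genuine improvement in rigor over the paper's one-line conclusion.
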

\begin{proof}
The average Jacobian of $ F_{2d,t} $ is defined explicitly as follows
\begin{equation*}
b_{2d,t} = \exp \int_{\OO_t} \log (\Jac F_{2d,t})\,d\mu_t = \exp \int_{\OO_t} \log \left( \frac{\di \eps}{\di y} + t \frac{\di \xi_t}{\di y} \right)\,d\mu_t
\end{equation*}
where $ \mu_t $ is the unique $ F_{2d,t} $-invariant probability measure on each critical Cantor set $ \OO_t \equiv \OO_{F_{2d,t}} $. By Lemma \ref{lem-Continuous moving of Cantor set with t}, $ \OO_t $ moves continuously. Then the integral is also continuous with $ t $. 
\end{proof}
\begin{rem}
If the H\'enon-like map $ F_t $ in $ \II(\bar \eps) $ is analytic and it is extendible holomorphically, then the critical Cantor set moves holomorphically with $ t $ by Lemma 5.6 in \cite{CLM}. 
\end{rem}
\nin Define that a $ C^r $ H\'enon-like map, $ F_{2d} $ is {\em embedded} in analytic {\em three dimensional H\'enon-like map} in $ \II(\bar \eps) $ only if $ F_{2d} $ is conjugated by a $ C^r $ map to $ F|_Q $ where $ Q $ is a $ C^r $ invariant surface tangent to $ E^{pu} $ over the critical Cantor set. 
\begin{thm} \label{thm-open dense subset of Cr Henon-like maps}
Let $ F_{2d,b} $ be an element of parametrized $ C^r $ H\'enon-like maps for $ b \in [0,1) $ in $ \II(\bar \eps) $ where $ b $ is the average Jacobian of $ F_{2d,b} $ for $ 2 \leq r < \infty $. Then for some $ \bar b > 0 $, the set of parameter values, an interval $ [0,\bar b] $ on which the map $ F_{2d,b} $ is embedded in three dimensional analytic H\'enon-like maps in $ \II(\bar \eps) $ contains a dense open subset. 
\end{thm}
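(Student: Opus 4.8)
The plan is to prove the statement in three moves: first, that a $ C^r $-dense set of two dimensional $ C^r $ Hénon-like maps in $ \II(\bar\eps) $ is embedded; second, that being embedded is a $ C^1 $-open condition; and third, that these two facts transfer to the parameter interval through the continuity of the average Jacobian (Corollary \ref{cor-continuous moving of average Jacobian}) and the continuous dependence of $ F_{2d,b} $ on $ b $.

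For the density, I would invoke Lemma \ref{lem-a dense subset of Cr Henon-like maps}: the maps of the form \eqref{eq-small perturbation of model map with one parameter} are $ C^r $-dense among $ C^r $ Hénon-like maps in $ \II(\bar\eps) $, and by Lemma \ref{lem-existence of Cr invariant surface, openness} each such map is $ \pi_{xy}\circ F_t $ restricted to the $ C^r $ invariant surface $ \mathrm{graph}(\xi_t) $ of the analytic three dimensional perturbation $ F_t $ of \eqref{eq-small perturbation of 3d map with t}; via the chart $ \pi_{xy}^{\xi_t} $ this restriction is $ C^r $-conjugate to the map in \eqref{eq-small perturbation of model map with one parameter}, so the latter is embedded. (More conceptually: any analytic two dimensional Hénon-like map $ F_{2d} $ in $ \II(\bar\eps) $ with $ b_1>0 $ is embedded, since one may extend it to its analytic toy model $ F_{\mod}(x,y,z)=(F_{2d}(x,y),\,\de(x,y,z)) $ with $ \|\partial_z\de\| $ chosen so small that $ b_2\ll b_1^{\,r} $; then the splitting of Lemma \ref{Invariance of cone filed perturbation infty} is $ r $-dominated, the invariant surface provided by Theorem \ref{Existence of invariant submanifold} is a $ C^r $ graph, and $ \pi_{xy}\circ F_{\mod}=F_{2d} $ identifies its restriction to that surface with $ F_{2d} $ itself.) Together with the $ C^r $-density of analytic maps and of the multivariate Bernstein approximations recalled before Lemma \ref{lem-a dense subset of Cr Henon-like maps}, this produces the required $ C^r $-dense family of embedded maps.

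For the openness, suppose $ F_{2d,b_0} $ is embedded, witnessed by an analytic $ F\in\II(\bar\eps) $ with a $ C^r $ invariant surface $ Q=\mathrm{graph}(\xi) $ tangent to $ E^{pu} $ over $ \OO_F $. Then $ F $ has $ r $-dominated (pseudo-hyperbolic) splitting there, a $ C^1 $-open property, and by the $ C^1 $-robustness of invariant submanifolds recorded after Theorem \ref{Existence of invariant submanifold} every analytic $ \widetilde F $ that is $ C^1 $-close to $ F $ still carries a $ C^r $ invariant surface $ \mathrm{graph}(\widetilde\xi_{\widetilde F}) $ depending continuously on $ \widetilde F $. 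Given a $ C^r $ target $ G $ close to $ F_{2d,b_0} $, I would solve for the analytic first-coordinate data $ \widetilde\eps $ of $ \widetilde F $ the equation $ \widetilde\eps\,(x,y,\widetilde\xi_{\widetilde F}(x,y))=\eps_G(x,y) $; since $ \widetilde\xi_{\widetilde F} $ depends on $ \widetilde\eps $ in a contracting way because $ \|\partial_z\widetilde\eps\|=O(\bar\eps) $, a Banach fixed point argument yields such a $ \widetilde F $, showing $ G $ is embedded. Hence the embedded maps form a $ C^r $-open set; pulling it back by the continuous map $ b\mapsto F_{2d,b} $ and combining with the density of the previous paragraph (transported to a density statement for $ b\in[0,\bar b] $ via Corollary \ref{cor-continuous moving of average Jacobian} and the intermediate value theorem), the embedded parameters in $ [0,\bar b] $ contain a dense open subset.

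I expect the openness step to be the main obstacle: one must show that the assignment sending an analytic three dimensional Hénon-like map to the restriction to its $ C^r $ invariant surface covers an entire $ C^r $-neighborhood of $ F_{2d,b_0} $, with constants uniform in $ b_0 $, and it is precisely the collapse of the domination margin as $ b_1\downarrow 0 $ (where $ b_2\ll b_1^{\,r} $ forces $ \partial_z\de\to 0 $) that restricts the conclusion to a proper subinterval $ [0,\bar b] $ rather than all of $ [0,1) $. A secondary, more bookkeeping-type point is reconciling the a priori parametrization $ b\mapsto F_{2d,b} $ with the embedded maps produced by the construction, which is handled by the continuity of the critical Cantor set and of the average Jacobian (Lemma \ref{lem-Continuous moving of Cantor set with t} and Corollary \ref{cor-continuous moving of average Jacobian}).
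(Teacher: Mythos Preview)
Your density step matches the paper's exactly: both invoke Lemma \ref{lem-a dense subset of Cr Henon-like maps}.

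Your openness step, however, diverges from the paper and contains a genuine gap. You try to prove that being embedded is $C^r$-open in the space of two dimensional $C^r$ H\'enon-like maps, by solving $\widetilde\eps(x,y,\widetilde\xi_{\widetilde F}(x,y))=\eps_G(x,y)$ for analytic $\widetilde\eps$ via a Banach fixed point. But $G$ is an arbitrary $C^r$ map near $F_{2d,b_0}$, so $\eps_G$ is merely $C^r$; there is no reason an \emph{analytic} $\widetilde\eps$ composed with a $C^r$ graph $\widetilde\xi_{\widetilde F}$ should reproduce a prescribed $C^r$ function, and your contraction sketch does not address this mismatch of regularity classes. The claimed fixed point would have to live in a space of analytic perturbations while the target lives in $C^r$, so the map whose fixed point you seek does not even take values in the right space. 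This is the step you yourself flag as the main obstacle, and indeed it does not go through as written.

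The paper sidesteps this entirely. Its openness is not openness in the space of maps but openness directly in the parameter $b$: by Lemma \ref{lem-existence of Cr invariant surface, openness} the explicit one-parameter analytic family $F_t$ of \eqref{eq-small perturbation of 3d map with t} has $C^r$ invariant surfaces for all small $|t|$, hence produces an interval of embedded two dimensional maps $F_{2d,t}$; by Corollary \ref{cor-continuous moving of average Jacobian} the average Jacobian $b_{2d,t}$ varies continuously in $t$, so the image in the $b$-parameter is an open interval. Combining this with the density from Lemma \ref{lem-a dense subset of Cr Henon-like maps} gives the dense open subset of $[0,\bar b]$. In short, the paper never needs to embed an arbitrary nearby $C^r$ map; it only needs that each embedded map sits inside an explicitly constructed one-parameter family of embedded maps sweeping out an open set of average Jacobians.
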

\begin{proof}
The density of the set of conjugated map from invariant surfaces is due to Lemma \ref{lem-a dense subset of Cr Henon-like maps}. The openness is involves with Lemma \ref{lem-existence of Cr invariant surface, openness} and Corollary \ref{cor-continuous moving of average Jacobian}. 
\end{proof}
\msk

\subsection*{Notes}
The definition of renormalizability of $ C^r $ H\'enon-like map is just extension of that of analytic H\'enon-like maps. However, hyperbolicity of renormalization operator for $ C^r $ H\'enon-like maps at the fixed point is not proved yet. In previous sections, using single invariant surfaces in three dimensional analytic H\'enon-like maps, we construct $ C^r $ conjugation between maps in single invariant surfaces and two dimensional maps. It defines infinite renormalization of $ C^r $ H\'enon-like maps in this class.  
Moreover, direct calculations of asymptotics in \cite{CLM} to this article, the smoothness of invariant surfaces seems to be sufficient for $ r =2 $. However, 
the hyperbolicity of period doubling operator of one dimensional maps requires $ C^{2 + \epsilon} $ maps with arbitrary small but positive number $ \epsilon $ in \cite{Dav} and moreover, H\'enon renormalization contains that of one dimensional maps as degenerate maps. On the other hand, since invariant surfaces are constructed by invariant cone fields, these surfaces cannot be $ C^{\infty} $ or analytic. Existence of any single invariant $ C^{\infty} $ or non-flat analytic surfaces tangent to $ E^{pu} $ over the critical Cantor set is not known yet.


\bsk

\section{Unbounded geometry on the Cantor set}
Let the subset of critical Cantor set on each pieces be $ \OO_{\bf w} \equiv B^n_{\bf w} \cap \OO $ where $ {\bf w} \in W^n = \{ v,\,c\}^n $ is the word of length $ n $. We may assume that every box region is (path) connected and simply connected. Suppose that each topological region, $ B^n_{\bf w} $ compactly contains $ \OO_{\bf w} $ and moreover $ \overline{B^n_{\bf w}} $ is disjoint from $ \overline{\OO \setminus \OO_{\bf w}} $ for every word $ \bf w $. Assume also that every $ B^n_{\bf w} $ is forward invariant under $ F^{2^n} $ for all word $ \bf w $ and every $ n \in \N $. {\em Bounded geometry} is defined for given box regions which satisfy the following
\begin{align*}
 \dist_{\min}(B^{n+1}_{{\bf w}v}, B^{n+1}_{{\bf w}c}) &\asymp \diam(B^{n+1}_{{\bf w}\nu}) \quad \text{for} \ \nu \in \{v, c\} \\[0.2em]
 \diam(B^n_{\bf w}) &\asymp \diam(B^{n+1}_{{\bf w}\nu}) \quad \text{for} \ \nu \in \{v, c\}
\end{align*}
for all $ {\bf w} \in W^n $ and for all $ n \geq 0 $. 
%
\nin The proof of unbounded geometry of critical Cantor set requires to compare the diameter of boxes and the minimal distance of two adjacent boxes. In order to compare these quantities, we would use the maps, $ \Psi^n_k $, $ R^kF $ and $ \Psi^k_0 $ with the two points $ w_1 = (x_1,\, y_1,\, z_1) $ and $ w_2 = (x_2,\, y_2,\, z_2) $ in $ \OO_{R^nF} $. Let us each successive image of $ w_j $ under $ \Psi^n_k $, $ R^kF $ and $ \Psi^k_0 $ be $ \dot w_j $, $ \ddot w_j $ and $ \dddot w_j $ for $ j=1,2 $.
\begin{displaymath}
\xymatrix  
{   { w_j}  \ar@{|->}[r]^ {\Psi^n_k} & \dot w_j  \ar@{|->}[r]^{R^kF}  &\ddot w_j  \ar@{|->}[r]^{ \Psi^k_0}     & \dddot w_j 
   }
\end{displaymath}
Let the coordinates of the point, $ \dot w_j $ be $ (\dot x_j ,\, \dot y_j ,\, \dot z_j ) $. The points $ \ddot w_j  $ and $ \dddot w_j $ also have the similar coordinate expressions. Let $ S_1 $ and $ S_2 $ be the (path) connected set on $ \R^3 $. If $ \pi_x(\overline{S}_1) \cap \pi_x(\overline{S}_2) $ contains at least two points, then this intersection is called the {\em $ x- $axis overlap} or {\em horizontal overlap} of $ S_1 $ and $ S_2 $. Moreover, we say $ S_1 $ {\em overlaps} $ S_2 $ on the $ x- $axis or horizontally.
\ssk \\
\nin Let $ F_{2d} $ be an infinitely renormalizable two dimensional H\'enon-like map and $ b_1 $ be the average Jacobian of $ F_{2d} $. Then unbounded geometry of the critical Cantor set depends on Universality theorem and the asymptotic of the tilt, $ - t_k \asymp b_1^{2^k} $ but it does not depend on the analyticity of the map. The infinitely renormalizable $ C^r $ H\'enon-like maps defined by invariant surfaces has Universality by Theorem \ref{Universality of Cr Henon maps} and the asymptotic of the tilt $ - {}_{2d}t_k \asymp b_1^{2^k} $ by Corollary \ref{Estimation of tilt,t-k}. Then unbounded geometry of the critical Cantor set in \cite{CLM} and \cite{HLM} is applicable to $ C^r $ H\'enon-like map defined by invariant surfaces.
\ssk \\
Observe that $ \dist_{\min}(S_1, S_2) \leq \dist(w_1, w_2) $ for all $ w_1 \in S_1 $ and $ w_2 \in S_2 $ and $ \diam(S) \geq \dist(w, w') $ for all $ w, w' \in S $.
\begin{lem}  \label{bounds of min distance and diameter}
Let $ F_{2d} $ be an infinitely renormalizable $ C^r $ H\'enon-like maps defined by invariant surfaces which is tangent to $ E^{pu} $ over $ \OO_F $. 
Suppose that two dimensional box $ _{2d}B^{n-k}_{{\bf v}v}(R^kF_{2d}) $ overlaps\, $ _{2d}B^{n-k}_{{\bf v}c}(R^kF_{2d}) $ on the $ x- $axis where $ {\bf v} = v^{n-k-1} $. Then for all sufficiently large $ k$ and $ n $ with $ k < n $, 
we have the following estimate
\begin{align*}
\dist_{\min}(_{2d}B^{n}_{{\bf w}v},\, _{2d}B^{n}_{{\bf w}c}) &\leq C_0 \, b_1^{2^k} \si^{2k} \si^{n-k} \\
\diam (_{2d}B^{n}_{{\bf w}v}) &\geq C_1 \si^{2(n-k)} \si^{k}
\end{align*}
where $ {\bf w} = v^kcv^{n-k-1} \in W^n $ for some positive constants $ C_0 $ and $ C_1 $.
\end{lem}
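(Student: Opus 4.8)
The proof will follow the unbounded-geometry computation of \cite{CLM} (Section~10 there) and \cite{HLM}, with the analytic ingredients used there replaced by their $C^r$ counterparts proved above: the normal form of the scope maps (Theorem~\ref{Universal estimation of scaling maps}), the tilt estimate $-\,{}_{2d}t_k\asymp(b_{2d})^{2^k}=b_1^{2^k}$ (Corollary~\ref{Estimation of tilt,t-k}), the surface bound $\|\partial\xi_m\|\le C\si^m$ (Proposition~\ref{invariant surfaces on each deep level}), and the box-size estimate \eqref{eq-diameter of box Bn}. The first step is to realize the two boxes of the conclusion as images of a deep configuration: writing ${\bf w}=v^kcv^{n-k-1}$, the boxes ${}_{2d}B^n_{{\bf w}v}$ and ${}_{2d}B^n_{{\bf w}c}$ are the images, under the composition ${}_{2d}\Psi^k_{0,\xi,\tip}\circ\psi^{k+1}_c$ (the level-$k$ tip scope map ${}_{2d}\Psi^k_{0,\xi,\tip}$ of $F_{2d}$ precomposed with the $c$-branch map $\psi^{k+1}_c$), of the two children ${}_{2d}B^{n-k}_{{\bf v}v}$ and ${}_{2d}B^{n-k}_{{\bf v}c}$ of the deepest tip box of $R^kF_{2d}$. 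Thus the question reduces to estimating how this fixed composition distorts the diameter of, and the minimal distance between, those two children, for which Theorem~\ref{Universal estimation of scaling maps} supplies the needed normal form: ${}_{2d}\Psi^k_{0,\xi,\tip}$ scales the $y$-direction by $\si_{k,0}\asymp\si^k$ and the $x$-direction by $\alpha_{k,0}\asymp\si^{2k}$ up to a tilt term $\si_{k,0}\,{}_{2d}t_{k,0}\,y$, with uniformly bi-Lipschitz nonlinear factor, while $\psi^{k+1}_c=F_k\circ H_k^{-1}\circ\La_k^{-1}$ is a uniformly controlled near-degenerate Hénon factor composed with a contraction of order $\si$.

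For the diameter bound one uses that $R^kF_{2d}\in\II(\bar\eps^{2^k})$ has average Jacobian $b_1^{2^k}$, so that in the normal form of its own scope map the $\alpha$-term dominates the tilt term and its deepest tip box at level $n-k$ has $x$-extent of order $\si^{2(n-k)}$. The $c$-branch $\psi^{k+1}_c$ carries this $x$-extent — through the swap $\pi_y\circ F_k(x,y)=x$ and the scaling $\La_k^{-1}$ — to a $y$-extent of order $\si\cdot\si^{2(n-k)}$, and then ${}_{2d}\Psi^k_{0,\xi,\tip}$ multiplies the $y$-coordinate by $\si_{k,0}\asymp\si^k$; hence ${}_{2d}B^n_{{\bf w}v}$ contains a segment of length $\gtrsim\si^k\si^{2(n-k)}$, and since the diameter dominates every coordinate extent this yields $\diam({}_{2d}B^n_{{\bf w}v})\ge C_1\si^{2(n-k)}\si^k$.

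For the distance bound one uses the horizontal-overlap hypothesis: there are $p\in{}_{2d}B^{n-k}_{{\bf v}v}$ and $q\in{}_{2d}B^{n-k}_{{\bf v}c}$ with $\pi_x(p)=\pi_x(q)$, and since both lie in the level-$(n-k-1)$ tip box, $|\pi_y(p)-\pi_y(q)|\le C\si^{n-k-1}$. Reshuffling ${}_{2d}\Psi^k_{0,\xi,\tip}\circ\psi^{k+1}_c$ into affine-plus-nonlinear form and applying it to the $x$-aligned pair $p,q$, the $y$-component of the displacement of the images is of order at most $\si^k\cdot\si^{n-k-1}$, while its $x$-component is $\alpha_{k,0}$ (which is $\asymp\si^{2k}$) times a residual $x$-displacement that, having passed through the $c$-branch, is governed by the level-$k$ tilt $-\,{}_{2d}t_{k,0}$ — equivalently by $\Jac R^kF_{2d}$, see Lemma~\ref{Universal Jacobian determinant of Cr Henon map} — hence of order $b_1^{2^k}$; the residual nonlinear discrepancies from ${}_{2d}S^k_0$ and from the $\xi_m$ are lower order. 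Collecting the factors, $\dist_{\min}({}_{2d}B^n_{{\bf w}v},{}_{2d}B^n_{{\bf w}c})\le\dist\bigl({}_{2d}\Psi^k_{0,\xi,\tip}(\psi^{k+1}_c(p)),{}_{2d}\Psi^k_{0,\xi,\tip}(\psi^{k+1}_c(q))\bigr)\le C_0\,b_1^{2^k}\si^{2k}\si^{n-k}$.

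The delicate point — and the step I expect to be the real obstacle — is precisely this identification of the residual $x$-displacement: after reshuffling ${}_{2d}\Psi^k_{0,\xi,\tip}\circ\psi^{k+1}_c$ one has to check that what survives between two $x$-aligned points is controlled by the deeply renormalized quantity $b_1^{2^k}$ and the contraction $\si^{2k}$, rather than by the $O(1)$ tilt present at shallow renormalization levels, and that the merely $C^r$ (non-analytic) corrections coming from the surface maps $\xi_m$ do not interfere — which they do not, being $O(\si^m)$ by Proposition~\ref{invariant surfaces on each deep level}. Once this is in place, the remaining manipulations are exactly those of \cite{CLM}, which, as noted after Theorem~\ref{Universal estimation of scaling maps}, transfer verbatim to $C^r$ maps with $r\ge2$ since only the recursive formulas for the first and second derivatives of the scope maps are used.
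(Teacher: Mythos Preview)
Your overall strategy is the paper's: push a pair of points through the chain (tip scope to level $k$) $\to$ (one iterate of $R^kF_{2d}$) $\to$ (tip scope to level $0$) and read off the coordinate displacements. There is, however, a genuine gap in the distance bound. You estimate the $y$-component of the final displacement by $\si^k\cdot\si^{n-k-1}=\si^{n-1}$, but $\si^{n-1}$ is \emph{not} dominated by the target $C_0\,b_1^{2^k}\si^{2k}\si^{n-k}=C_0\,b_1^{2^k}\si^{n+k}$ for large $k$ (the latter is super-exponentially smaller), so your two coordinate bounds do not add up to the stated conclusion. The missing observation is precisely the swap $\pi_y\circ R^kF_{2d}=\pi_x$ that you yourself invoke in the diameter argument: if $\dot w_1,\dot w_2$ is the $x$-aligned pair at level $k$ (so $\dot x_1=\dot x_2$) and $\ddot w_j=R^kF_{2d}(\dot w_j)$, then $\ddot y_1-\ddot y_2=\dot x_1-\dot x_2=0$, whence $\dddot y_1-\dddot y_2=\si_{k,0}(\ddot y_1-\ddot y_2)=0$ \emph{exactly}. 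This exact vanishing is what makes the bound work; it also kills the tilt term $\si_{k,0}\,{}_{2d}t_{k,0}(\ddot y_1-\ddot y_2)$ and the $\di_y({}_{2d}S^k_0)$ contribution in the $x$-displacement, leaving $\ddot x_1-\ddot x_2=-\di_y\eps_k(\eta)(\dot y_1-\dot y_2)$ and $\dddot x_1-\dddot x_2=\alpha_{k,0}[v_*'(\bar x)+O(\bar\eps+\rho^k)](\ddot x_1-\ddot x_2)$, from which the claimed $C_0\,b_1^{2^k}\si^{2k}\si^{n-k}$ follows.

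A smaller but related point: your composition ${}_{2d}\Psi^k_{0,\xi,\tip}\circ\psi^{k+1}_c$ applied to boxes ``of $R^kF_{2d}$'' does not type-check, since $\psi^{k+1}_c$ has domain $B(R^{k+1}F_{2d})$. The correct chain is $\Psi^k_0\circ R^kF_{2d}$ applied to the $x$-aligned pair already sitting at level $k$ (equivalently $\Psi^k_0\circ R^kF_{2d}\circ\Psi^n_k$ applied to points at level $n$); this also removes the spurious extra factor of $\si$ coming from $\La_k^{-1}$ in your diameter estimate.
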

\begin{proof}
The proof is the same as the analytic case because unbounded geometry depends only on the universality theorem and asymptotic of the tilt $ - {}_{2d}t_k \asymp b_1^{2^k} $. Then we can adapt the proof for analytic maps in \cite{HLM}. For the sake of completeness, we describe the proof below.  
Choose two points $ w_1 = (x_1,\,y_1) $ and $ w_2 = (x_2,\,y_2) $ in $ _{2d}B^1_v(R^n\!F_{2d}) \cap \OO_{R^n\!F_{2d}} $ and $ _{2d}B^1_c(R^n F_{2d}) \cap \OO_{R^n\! F_{2d}} $ respectively in order to estimate the minimal distance between two boxes.  \ssk \\
The expression of $ _{2d}\Psi^n_{k,\,\xi} $ in Theorem \ref{Universal estimation of scaling maps} and overlapping assumption implies the coordinates of the points, $ (\dot x_j,\,\dot y_j) $, $ (\ddot x_j,\,\ddot y_j) $ and $ (\dddot x_j,\,\dddot y_j) $ for $ j=1,2 $ as follows
\begin{equation*}
\dot x_1 - \dot x_2 = 0 \ \ \textrm{and} \ \ \dot y_1 - \dot y_2 = \si_{n,\,k}(y_1 - y_2)
\end{equation*}
The special form of H\'enon-like map, $ R^kF_{2d} $ and coordinate change map, $ _{2d}\Psi^n_{k,\,\xi} $ imply that 
\begin{equation} \label{eq-overlapping on x-axis}
\dddot y_1 - \dddot y_2 = \si_{k,\,0} (\ddot y_1 - \ddot y_2) = \si_{k,\,0} (\dot x_1 - \dot x_2) = 0
\end{equation}
By mean value theorem and the fact that $ (\ddot x_j,\,\ddot y_j) = R^kF_{2d}(\dot x_j,\,\dot y_j) $ for $ j=1,2 $ implies that 
\begin{align*}
\ddot x_1 - \ddot x_2 &= f_k(\dot x_1) - \eps_k(\dot x_1,\,\dot y_1) - \big[ f_k(\dot x_2) - \eps_k(\dot x_2,\,\dot y_2) \big] \\[0.2em]
&= - \eps_k(\dot x_1,\,\dot y_1) + \eps_k(\dot x_2,\,\dot y_2) \\[0.2em]
&= - \di_y \eps_k(\eta) \cdot (\dot y_1 - \dot y_2) \\[0.2em]
&= - \di_y \eps_k(\eta) \cdot \si_{n,\,k}(y_1 - y_2)
\end{align*}
where $ \eta $ is some point in the line segment between $ (\dot x_1,\,\dot y_1) $ and $ (\dot x_2,\,\dot y_2) $. Thus by Theorem \ref{Universal estimation of scaling maps} and the equation \eqref{eq-overlapping on x-axis}, we obtain that
\begin{align*}
\dddot x_1 - \dddot x_2 &= \pi_x \circ {}_{2d}\Psi^k_{0,\,\xi}(\ddot x_1,\,\ddot y_1) - \pi_x \circ {}_{2d}\Psi^k_{0,\,\xi}(\ddot x_2,\,\ddot y_2) \\[0.2em]
&= \alpha_{k,\,0}\big[ (\ddot x_1 + {}_{2d}S^k_0(\ddot x_1,\,\ddot y_1)) - (\ddot x_2 + {}_{2d}S^k_0(\ddot x_2,\,\ddot y_2)) \big] + \si_{k,\,0} \big[ {}_{2d}t_{k,\,0} \cdot (\ddot y_1 - \ddot y_2) \big] \\[0.2em]    \numberthis \label{eq-horizontal distance of adjacent boxes}
&= \alpha_{k,\,0}\big[ v'_*(\bar x) + O(\bar \eps + \rho^k)\big] (\ddot x_1 - \ddot x_2) .
\end{align*}
Then by the fact that $ \di_y \eps_k \asymp b_1^{2^k} $ where $ b_{1} $ is the average Jacobian of $ F_{2d} $, we can estimate the minimal distance
\begin{align*}
\dist_{\min}(_{2d}B^{n}_{{\bf w}v},\, _{2d}B^{n}_{{\bf w}c}) &\leq \big| \dddot x_1 - \dddot x_2 \big| + \big|\dddot y_1 - \dddot y_2 \big| \\[0.2em]
 &\leq \si^{2k} \big| \ddot x_1 - \ddot x_2 \big| \cdot  v'_*(\bar x) (1 + O(\rho^k)) \\[0.2em]
 &\leq C_0 \, b_1^{2^k} \si^{2k} \si^{n-k}
\end{align*}
where $ v_*(x) $ is the positive universal function for some $ C_0 > 0 $. Take any two different points, $ (x_1,\,y_1) $ and $ (x_2,\,y_2) $ in the box $ _{2d}B^1_v(R^nF_{2d}) \cap \OO_{R^n\!F_{2d}} $ to estimate the diameter of $ _{2d}B^{n}_{{\bf w}v} $. Thus the special forms of $ R^kF_{2d} $, $ _{2d}\Psi^n_{k,\,\xi} $ and the equation \eqref{eq-horizontal distance of adjacent boxes} implies that
\begin{align*}
\diam(_{2d}B^{n}_{{\bf w}v}) &\geq \big|\dddot y_1 - \dddot y_2 \big| = \si_{k,\,0} \cdot (\ddot y_1 - \ddot y_2) \\[0.2em]
&= \big| \si_{k,\,0} \cdot (\dot x_1 - \dot x_2)\big| \\[0.2em]
&= \big| \si_{k,\,0} \big[ \pi_x \circ {}_{2d}\Psi^n_{k,\,\xi} (\dot x_1,\,\dot y_1) -  \pi_x \circ {}_{2d}\Psi^n_{k,\,\xi} (\dot x_2,\,\dot y_2) \big]\big| \\[0.2em]
&= \big| \si_{k,\,0}\, \alpha_{n,\,k} \big[ v'_*(\widetilde x)(x_1 - x_2) +O(\bar \eps^{2^k} + \rho^{n-k}) \big] \big| \\[0.2em]
&\geq C_1\,\si^{2(n-k)}\si^k
\end{align*}
where $ v_*(x) $ is the positive universal function for some $ C_1>0 $. 
\end{proof}
\msk
\nin Unbounded geometry on the critical Cantor set holds if we choose $ n >k $ such that $ b_1^{2^k} \asymp \si^{n-k} $ for every sufficiently large $ k \in \N $. This is true on the parameter space of average Jacobian, $ b_1 $ almost everywhere with respect to Lebesgue measure. 
\begin{thm}[\cite{HLM}] \label{G-delta subset with full Lebesgue measure on parameter space}
The given any $ 0<A_0<A_1 $, $ 0< \si < 1 $ and any $ p \geq 2 $, the set of parameters $ b \in [0,1] $ for which there are infinitely many $ 0<k<n $ satisfying 
$$ A_0 < \frac{b^{p^k}}{\si^{n-k}} < A_1 $$
is a dense $ G_{\delta} $ set with full Lebesgue measure. 
\end{thm}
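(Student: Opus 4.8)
This is the statement of \cite{HLM}; I indicate the route I would take.

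First I would pass to the coordinate $\gamma:=(-\log b)/\log(1/\si)$, an orientation‑reversing $C^\infty$ diffeomorphism of $(0,1)$ onto $(0,\infty)$ with non‑vanishing derivative, which therefore carries Lebesgue‑null sets, $G_\delta$ sets and dense sets to sets of the same kind; so it suffices to argue in the $\gamma$‑variable. Put $s=\log(1/\si)>0$, $a_i=\log A_i$, and $c=(a_1-a_0)/s=\log(A_1/A_0)/\log(1/\si)$. Writing $\ell=n-k$, the inequality $A_0<b^{p^k}/\si^{\,n-k}<A_1$ is equivalent to $p^k\gamma\in(\ell-a_1/s,\,\ell-a_0/s)$ for some integer $\ell\geq1$. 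If $c\geq1$ this length‑$c$ interval contains an integer for every $k$ and the conclusion is immediate, so I assume $0<c<1$ and let $I_0\subset\T=\R/\Z$ be the arc $(-a_1/s,-a_0/s)\bmod1$, of length $c$. For fixed $\gamma>0$ the constraint $\ell\geq1$ is automatic once $k$ is large, and since $c<1$ each $k$ admits at most one valid $n$; hence having infinitely many valid pairs $(k,n)$ is equivalent to $\{p^k\gamma\}\in I_0$ holding for infinitely many $k$, and in the $\gamma$‑coordinate the parameter set of the theorem is exactly
$$ S=\{\,\gamma>0:\{p^k\gamma\}\in I_0\ \text{for infinitely many }k\,\}. $$

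Then I would observe that $S=\bigcap_{m\geq1}U_m$, where $U_m=\bigcup_{k\geq m}\{\gamma:\{p^k\gamma\}\in I_0\}$. Each inner set is a union of $p^k$ translates of $\tfrac1{p^k}I_0$, hence open, so every $U_m$ is open and $S$ is $G_\delta$. For density of $U_m$: given a nonempty open $V$, choose $k\geq m$ with $2p^{-k}<|V|$; then $V$ contains an interval $[j/p^k,(j{+}1)/p^k]$, which contains the nonempty arc $\tfrac1{p^k}(j+I_0)$, so $U_m\cap V\neq\varnothing$. Thus $S$ is a dense $G_\delta$, and the same holds for the corresponding $b$‑set.

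For full measure the plan is to use that $T\gamma=p\gamma$ on $\T$ preserves Lebesgue measure and is ergodic (if $g\circ T=g$ in $L^2(\T)$ then $\hat g(j)=\hat g(pj)$ for all $j$, so $\hat g$ vanishes off $0$ and $g$ is a.e. constant). Applying the Birkhoff ergodic theorem to $\mathbf1_{I_0}$ gives, for a.e. $\gamma$, $\tfrac1N\#\{0\leq k<N:\{p^k\gamma\}\in I_0\}\to c>0$, so a.e. $\gamma\in S$. Hence $S$ has full Lebesgue measure on $\T$, so on $(0,\infty)$, and pulling back by the diffeomorphism above shows the parameter set has full Lebesgue measure in $[0,1]$.

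The full‑measure step is the only substantive point. The route above reduces it to the classical ergodicity of $\times p$ on the circle; an elementary alternative is the second Borel--Cantelli lemma along a rapidly increasing subsequence $k_1<k_2<\cdots$, whose hypothesis is the quasi‑independence estimate $\bigl|\{\gamma:\{p^{k_i}\gamma\}\in I_0\}\cap\{\gamma:\{p^{k_j}\gamma\}\in I_0\}\bigr|=c^2+O(p^{-(k_j-k_i)})$ for $i<j$ — valid because the $p^{-k_i}$‑scale arcs equidistribute modulo $p^{-k_j}$ once $k_j-k_i$ is large, which is where $p\geq2$ enters — combined with $\sum_i\bigl|\{\gamma:\{p^{k_i}\gamma\}\in I_0\}\bigr|=\sum_i c=\infty$. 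Establishing this correlation bound uniformly is the delicate part.
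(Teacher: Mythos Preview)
The paper does not supply its own proof of this statement --- it is quoted from \cite{HLM} as a black box and immediately applied with $p=2$ --- so there is nothing in the paper to compare your argument against.

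Your route is sound. The change of variable $\gamma=-\log b/\log(1/\si)$ cleanly converts the condition into the hitting problem $\{p^k\gamma\}\in I_0$ for infinitely many $k$; the $G_\delta$ structure and density of the sets $U_m$ are verified correctly; and for full measure, invoking ergodicity of $\gamma\mapsto p\gamma$ on $\T$ together with Birkhoff applied to $\mathbf{1}_{I_0}$ is the standard and efficient argument. The extension from $\T$ to $(0,\infty)$ via $\{p^k\gamma\}=\{p^k\{\gamma\}\}$ and the pull-back along the diffeomorphism (which preserves null sets since it is locally bi-Lipschitz) are both fine.

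Two small remarks. First, your ergodicity argument and the periodicity reduction from $(0,\infty)$ to $\T$ both use that $p$ is an \emph{integer}; the theorem as stated allows real $p\geq 2$, but since only $p=2$ is needed here this is harmless for the application --- just note the implicit hypothesis. Second, the throwaway line for $c\geq 1$ is not quite right when $c=1$ exactly (an open interval of length $1$ can miss all integers, e.g.\ $(0,1)$); either absorb $c=1$ into the main case $0<c<1$ after shrinking $A_1$ slightly, or observe that this borderline situation still yields infinitely many hits for all but countably many $\gamma$. Neither point affects the conclusion.
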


\ssk  

\nin Recall that toy model map has universal numbers --- the average Jacobian, $ b_{\mod} $, the average Jacobian of two dimensional map, $ \pi_{xy} \circ F_{\mod} $, $ b_{1,\mod} $ and the ratio of these two numbers, $ b_{2,\mod} \equiv b_{\mod} / b_{1,\mod} $. If $ b_{2,\mod} \ll b_{1,\mod} $, then each of these numbers can be generalized to a sufficiently small perturbation of toy model map. In particular, the number $ b_{1,\mod} $ is generalized to the average Jacobian of $ F_{2d,\,\xi} $, say $ b_1 $, by Theorem \ref{Universality of Cr Henon maps}. Another number $ b_2 $ is just defined as the ratio, $ b_F / b_1 $. 
%
%
%
Then unbounded geometry of Cantor attractor of $ F|_{\,Q} $ on invariant surface is extended to those of same Cantor set for three dimensional map, $ F $.


\begin{thm} \label{Unbounded geometry for model maps}
Let $ F $ be three dimensional H\'enon-like map in $ \II(\bar \eps) $ which is a small perturbation of toy model map with $ b_2 \ll b_1 $. 
Then for each sufficiently small fixed positive number $ b_2 $, the parametrized H\'enon-like map $ F_{b_1} $ for $ b_1 \in [\,b_{\circ}, b_{\bullet}] $ where $ b_1 $ is the average Jacobian of two dimensional $ C^r $ H\'enon-like map, $ F_{2d,\,\xi} $ for $ b_2 \ll b_{\circ} < b_{\bullet} $. Then there exists $ G_{\delta} $ subset $ S $ with full Lebesgue measure of\, $ [\,b_{\circ}, b_{\bullet}] $ such that the critical Cantor set, $ \OO_{F_{b_1}} $ has unbounded geometry.
\end{thm}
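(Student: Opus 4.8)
The plan is to reduce the assertion to the two dimensional $ C^r $ H\'enon-like maps $ F_{2d,\,\xi} $ attached to the invariant surfaces, apply to them the arithmetic Theorem \ref{G-delta subset with full Lebesgue measure on parameter space} together with the geometric Lemma \ref{bounds of min distance and diameter} exactly as in the analytic case of \cite{HLM}, and then transfer the conclusion back to the three dimensional map through the conjugacy $ \pi_{xy}^{\xi} $. Fix $ b_2 > 0 $ small as in the statement. Since $ b_2 \ll b_{\circ} \leq b_1 $, Lemma \ref{invariant surface containing Per and Cantor set} together with the $ C^1 $ robustness of single invariant surfaces guarantees that every $ F_{b_1} $ with $ b_1 \in [\,b_{\circ}, b_{\bullet}\,] $ carries a $ C^r $ invariant surface $ Q(b_1) = \mathrm{graph}(\xi) $ tangent to $ E^{pu} $ over $ \OO_{F_{b_1}} $, and by Proposition \ref{invariant surfaces on each deep level} the surface on every level $ n $ is the graph of $ \xi_n(x,y) = c_0 y (1 + O(\si^n)) $; in particular the $ C^1 $ norms $ \| D\xi_n \| $ are bounded uniformly in $ n $ (and in $ b_1 $).

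First I would pin down the parameter set. Applying Theorem \ref{G-delta subset with full Lebesgue measure on parameter space} with $ p = 2 $, with $ \si $ the universal scaling ratio and with any fixed $ 0 < A_0 < A_1 $, and intersecting the resulting dense $ G_{\delta} $ full measure subset of $ [0,1] $ with $ [\,b_{\circ}, b_{\bullet}\,] $, produces a set $ S $ which is $ G_{\delta} $ and of full Lebesgue measure in $ [\,b_{\circ}, b_{\bullet}\,] $. By construction, for every $ b_1 \in S $ there are infinitely many pairs $ 0 < k < n $ with $ b_1^{2^k} \asymp \si^{n-k} $.

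Next I would run the geometric estimate along such a subsequence. For all large $ k $ (and the associated $ n $) the horizontal overlap hypothesis of Lemma \ref{bounds of min distance and diameter} holds, since it depends only on Universality (Theorem \ref{Universality of Cr Henon maps}) and on the tilt asymptotic $ - {}_{2d}t_k \asymp b_1^{2^k} $ (Corollary \ref{Estimation of tilt,t-k}), both of which are available for $ F_{2d,\,\xi} $ verbatim as in \cite{HLM}. Then Lemma \ref{bounds of min distance and diameter}, applied with $ {\bf w} = v^k c v^{n-k-1} $, gives
\begin{equation*}
\frac{\dist_{\min}({}_{2d}B^{n}_{{\bf w}v},\ {}_{2d}B^{n}_{{\bf w}c})}{\diam({}_{2d}B^{n}_{{\bf w}v})} \leq \frac{C_0\, b_1^{2^k}\,\si^{2k}\,\si^{n-k}}{C_1\,\si^{2(n-k)}\,\si^{k}} \asymp \frac{\si^{n-k}\,\si^{2k}\,\si^{n-k}}{\si^{2(n-k)}\,\si^{k}} = \si^{k} \xrightarrow[k\to\infty]{} 0 .
\end{equation*}
Since a bounded geometry would force this ratio to remain $ \asymp 1 $, the critical Cantor set $ \OO_{F_{2d,\,\xi}} $ of $ F_{b_1} $ has unbounded geometry for every $ b_1 \in S $.

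Finally I would transfer the conclusion to three dimensions. Because $ \OO_{F_{b_1}} \subset Q(b_1) $ and $ \pi_{xy}^{\xi} $ conjugates $ F_{b_1}|_{Q(b_1)} $ to $ F_{2d,\,\xi} $, the commutative diagram relating the three and two dimensional scope maps identifies the box pieces $ B^n_{{\bf w}}(F_{b_1}) \cap Q(b_1) $ with $ {}_{2d}B^n_{{\bf w}}(F_{2d,\,\xi}) $ under $ \pi_{xy}^{\xi} $; by the uniform bound on $ \| D\xi_n \| $ this identification and its inverse are bi-Lipschitz with constants independent of $ n $, so diameters and minimal distances of box pieces change by at most a fixed factor. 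Hence unbounded geometry of $ \OO_{F_{2d,\,\xi}} $ forces unbounded geometry of $ \OO_{F_{b_1}} $, for every $ b_1 \in S $, which is what is claimed. The step that most needs care is exactly this transfer: ``unbounded geometry'' is a statement about infinitely many nested scales, so one must know that the conjugacies $ \pi_{xy}^{\xi} $ are uniformly bi-Lipschitz \emph{across all scales} --- which is precisely where the asymptotics $ \xi_n(x,y) = c_0 y(1 + O(\si^n)) $ of Proposition \ref{invariant surfaces on each deep level} are used --- while everything else is quoted directly from \cite{HLM} and from the earlier sections of the paper.
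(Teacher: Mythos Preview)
Your proof is correct and follows essentially the same route as the paper: pin down the full-measure $G_\delta$ parameter set via Theorem \ref{G-delta subset with full Lebesgue measure on parameter space}, feed Lemma \ref{bounds of min distance and diameter} along the resulting sequence $b_1^{2^k}\asymp\si^{n-k}$ to get the ratio $\dist_{\min}/\diam\lesssim\si^k\to 0$ in two dimensions, and then transfer to three dimensions through the invariant surface. The only noteworthy difference is in the transfer step. You argue via the bi-Lipschitz property of $\pi_{xy}^{\xi}$ (controlled by $\|D\xi\|$), which compares the two-dimensional boxes with the \emph{surface} boxes ${}_{Q}B^n_{\bf w}$; to finish you still need the trivial containment ${}_{Q}B^n_{\bf w}\subset B^n_{\bf w}$, which immediately yields $\dist_{\min}(B^n_{{\bf w}v},B^n_{{\bf w}c})\leq\dist_{\min}({}_{Q}B^n_{{\bf w}v},{}_{Q}B^n_{{\bf w}c})$ and $\diam(B^n_{{\bf w}v})\geq\diam({}_{Q}B^n_{{\bf w}v})$. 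The paper simply invokes this containment directly (together with Proposition \ref{invariant surfaces on each deep level} for the comparability ${}_{2d}B^n_{\bf w}\asymp{}_{Q}B^n_{\bf w}$), which is marginally more direct but amounts to the same thing.
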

\begin{proof}
The box on the invariant surface $ Q $, $ _{Q}B^{n}_{{\bf w}} $ is defined as the image of the box, $ _{2d}B^{n}_{{\bf w}} $ of two dimensional H\'enon-like map under the graph map $ (x,y) \mapsto (x,y, \xi) $ for every $ n \in \N $ and every word $ {\bf w} \in W^n $. By Proposition \ref{invariant surfaces on each deep level}, the minimal distance between two boxes on the surface and that between two boxes on $ xy- $plane with the same word are comparable with each other for all words. Furthermore, there exist three dimensional boxes, $ B^{n}_{{\bf w}} $ such that $ Q \cap B^{n}_{{\bf w}} \supset {}_{Q}B^{n}_{{\bf w}} $ for every word $ {\bf w} $ because $ Q $ is an invariant surface which compactly contains the critical Cantor set. Then by Lemma \ref{bounds of min distance and diameter}, we have
\begin{equation*}
\begin{aligned}
\dist_{\min}(_{2d}B^{n}_{{\bf w}v},\; _{2d}B^{n}_{{\bf w}c}) &\asymp \dist_{\min}(_{Q}B^{n}_{{\bf w}v},\; _{Q}B^{n}_{{\bf w}c}) \\
\dist_{\min}(B^{n}_{{\bf w}v},\; B^{n}_{{\bf w}c}) &\leq \dist_{\min}(_{Q}B^{n}_{{\bf w}v},\; _{Q}B^{n}_{{\bf w}c}) \leq C_0\, b_1^{2^k} \si^{2k} \si^{n-k}
\end{aligned} \msk
\end{equation*}
for the word $ {\bf w} = v^{n-k-1}cv^k $ and moreover, \ssk
\begin{equation*}
\begin{aligned}
\diam (_{2d}B^{n}_{{\bf w}v}) &\asymp \diam (_{Q}B^{n}_{{\bf w}v}) \\
\diam (B^{n}_{{\bf w}v}) &\geq \diam (_{Q}B^{n}_{{\bf w}v}) \geq C_1 \si^{2(n-k)} \si^{k}
\end{aligned} \ssk
\end{equation*}
\nin for the word $ {\bf w} = v^{n-k-1}cv^k $ and for positive constants $ C_0 $ and $ C_1 $ independent of $ \bf w $ and $ n $. 
One box overlaps its adjacent box on the $ x- $axis in three dimension if and only if so does in two dimension because there exists an invariant surface as the graph from the plane to $ z- $axis. Then 
\begin{equation*}
 b_1^{2^k} \asymp \si^{n-k}
\end{equation*}
for all sufficiently large $ k $ in the $ G_{\delta} $ subset which has full measure in the parameter space\, $ [\,b_{\circ}, b_{\bullet}] $ by Theorem \ref{G-delta subset with full Lebesgue measure on parameter space}. Hence, $ \dist_{\min}(B^{n}_{{\bf w}v},\, B^{n}_{{\bf w}c}) \leq  C \si^{k}\diam (B^{n}_{{\bf w}v}) $ for some $ C>0 $. Therefore, the critical Cantor set has unbounded geometry.
\end{proof}

\bsk

\titleformat{\section}[display]{\normalfont\Large\bfseries}{Appendix~\Alph{section}}{12pt}{\Large}

\begin{appendices}

\section{Periodic points and critical Cantor set}
Let us take a word, $ {\bf w} = (w_1\,w_2\,w_3 \ldots w_n \ldots )$ as an address. The word of the first $ n $ concatenations, $ {\bf w}_n = (w_1\,w_2\,w_3 \ldots w_n ) $ is defined as the {\em subaddress} of the word $ {\bf w} $.
\ssk

\begin{lem} \label{limit points of periodic points}
Let $ F $ be the H\'enon-like map in $ \II(\bar \eps) $ with sufficiently small positive $ \bar \eps $. Then the set of accumulation points of $ \Per_F $ is the critical Cantor set $ \OO_F $. 
\end{lem}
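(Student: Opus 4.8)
The plan is to establish the two inclusions $\OO_F\subseteq\Per_F'$ and $\Per_F'\subseteq\OO_F$, where $\Per_F'$ denotes the derived set (set of accumulation points) of $\Per_F$. Since $\OO_F$ is a perfect compact Cantor set and $F$ acts on $\OO_F$ as the dyadic adding machine, $\OO_F$ is disjoint from $\Per_F$ and equals its own derived set; so the two inclusions give precisely $\Per_F'=\OO_F$, which is the assertion.

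First I would prove $\OO_F\subseteq\Per_F'$. Fix $w\in\OO_F$ with address ${\bf w}=(w_1w_2\ldots)\in W^{\infty}$, so that $w\in\bigcap_{n\ge1}B^{n}_{{\bf w}_n}$ along the nested sequence of pieces determined by the subaddresses ${\bf w}_n=(w_1\ldots w_n)$. By the structure recalled in Section~\ref{preliminaries}, each $B^{n}_{{\bf w}_n}$ contains a periodic point $p_n$ of minimal period $2^{n}$. Both $w$ and $p_n$ lie in $B^{n}_{{\bf w}_n}$, and $\diam(B^{n}_{{\bf w}_n})\le C\si^{n}$ by \eqref{eq-diameter of box Bn}, hence $p_n\to w$. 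The $p_n$ are pairwise distinct (their minimal periods $2^{n}$ differ) and each is distinct from the non-periodic point $w$; therefore $w$ is an accumulation point of $\{p_n\}\subseteq\Per_F$.

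For the reverse inclusion $\Per_F'\subseteq\OO_F$, the point is that periodic points of large period are trapped in deep pieces. Concretely, I would use the following part of the renormalization structure (see \cite{CLM} and, in three dimensions, \cite{Nam}): every periodic point of $F$ has minimal period $2^{m}$ for some $m\ge0$, the $2^{m}$-cycle is unique, and it consists of exactly one point in each piece $B^{m}_{\bf w}$, ${\bf w}\in W^{m}$. Restricting an address of length $m$ to its first $n$ letters then shows that every periodic point of minimal period $\ge 2^{n}$ lies in $\bigcup_{{\bf w}\in W^{n}}B^{n}_{\bf w}$; in particular the set $P_n$ of periodic points of minimal period $<2^{n}$ is finite. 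Now let $w^{*}\in\Per_F'$ and fix $n$. Since $P_n$ is finite, every neighbourhood of $w^{*}$ contains a point of $\Per_F\setminus P_n$, hence a point of $\bigcup_{{\bf w}\in W^{n}}B^{n}_{\bf w}$; as each $B^{n}_{\bf w}$ meets $\OO_F$ and has diameter at most $C\si^{n}$, this union lies in the $C\si^{n}$-neighbourhood of $\OO_F$. Shrinking the neighbourhood of $w^{*}$ and then letting $n\to\infty$ forces $\dist(w^{*},\OO_F)=0$, so $w^{*}\in\OO_F$ because $\OO_F$ is closed.

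The main obstacle is exactly the structural input used in the second inclusion: that $F$ carries no periodic points beyond the period-doubling cascade and that each $2^{m}$-cycle sits inside the level-$m$ pieces. For the degenerate one-dimensional map this follows from Sharkovskii's theorem together with the combinatorics of renormalization; for genuinely two- and three-dimensional $F\in\II(\bar\eps)$ it rests on the a priori bounds and the control of the (hyperbolic) periodic orbits organising the renormalization in \cite{CLM} (respectively \cite{Nam}). Granting this, the remainder of the argument is only the shrinking-pieces estimate \eqref{eq-diameter of box Bn} together with compactness of $\OO_F$.
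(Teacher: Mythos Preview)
Your proof is correct and follows essentially the same strategy as the paper: the first inclusion via the periodic point in each box $B^n_{{\bf w}_n}$ together with the diameter bound \eqref{eq-diameter of box Bn}, and the reverse inclusion via the fact that all but finitely many periodic points lie in $\bigcup_{{\bf w}\in W^n}B^n_{\bf w}$. The only cosmetic difference is in the second half: the paper argues by picking a convergent sequence $q_{n_k}\to w$ and showing that some box $B^{n_k}_{{\bf w}_{n_k}}$ must actually contain $w$, then nesting down to exhibit $w$ as a point of $\OO_F$; your $C\si^n$-neighbourhood argument is a slightly more direct packaging of the same structural input (namely that high-period orbits are confined to deep-level pieces), which you correctly flag as the substantive point borrowed from \cite{CLM,Nam}.
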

\begin{proof}
The region $ B^n_{{\bf w}_n} \equiv \Psi^n_{0, {\bf w}_n}(B(R^nF)) $ contains the periodic point, $ \Psi^n_{0, {\bf w}_n}(\beta_1(R^nF)) $ with period $ 2^n $. By construction of the critical Cantor set, every point $ \OO_F $, say $ w $ is as follows
$$ \{w\} = \bigcap_{n \geq 0} B^n_{\bf w_n} $$
for the corresponding words, $ {\bf w}_n $ are the subaddresses of $ {\bf w} \in W^{\infty} \equiv \{v, c\}^{\infty} $ for all $ n \in \N $. Since $ \diam (B^n_{{\bf w}_n}) \leq C\si^n $ for all word $ {\bf w}_n $ and for all $ n \in \N $, every points in $ \OO_F $ is contained in the set of accumulation points of $ \Per_F $. 
For the reverse inclusion, recall the following facts 
\begin{itemize}
\item[---] For any H\'enon-like map $ F \in \II(\bar \eps) $, the region $ B^1_v \cup B^1_c $ contains all periodic points of $ F $. \ssk
\item[---] The number of periodic points with any given single period, $ 2^n $ is always finite. \ssk
\item[---] The region $ B^N_{{\bf w}_N} $ compactly contains $ B^n_{{\bf w}_n} $ where $ n>N $ and the word $ {\bf w}_N $ is a subaddress of the word $ {\bf w}_n $. 
\end{itemize}
Take any point, say $ w $, in the set of accumulation point of $ \Per_F $. We may assume that there exists a sequence of periodic points, $ \{ q_{n_k} \} $ which converge to $ w $ as $ k \ra \infty $ where the period of each $ q_{n_k} $ is $ 2^{n_k} $ and $ {n_k} $ is increasing and $ n_k \ra \infty $ as $ k \ra \infty $. Observe that the periodic point $ q_{n_k} $ is $ \Psi^{n_k}_{0, {\bf w}_{n_k}}(\beta_1(R^{n_k}F)) $ for some address $ {\bf w}_{n_k} $. We claim that there exists a periodic point, $ q_{n_k} $ of which region $ B^{n_k}_{{\bf w}_{n_k}} $ contains $ w $. If not, then $ \Orb_F \big(\overline{ B^{n_k}_{{\bf w}_{n_k}}} \big) $ is disjoint from $ w $. However, every periodic points of which period is greater than $ q_{n_k} $ are in $ \Orb_F \big(\overline{ B^{n_k}_{{\bf w}_{n_k}}} \big) $. It contradicts the convergence of periodic points to $ w $.  Then we may assume that the region $ B^{n_k}_{{\bf w}_{n_k}} $ contains $ w $ and the sequence $ Q \equiv \{q_{n_m} \;|\; m >k \} $. Denote the region $ B^{n_k}_{{\bf w}_{n_k}} $ by $ B_k $ for each $ k $. Since every points $ q_{n_m} \in Q $ are a periodic points under $ R^{n_k}F $ in $ B(R^{n_k}) $, each region, $ B_m $ for $ m>k $ is compactly contained in $ B_k $ and moreover, $ B_m $ converges to $ w $ as $ m \ra \infty $. Each region $ B_m $ has its own address and the address converges to a word $ {\bf w} \in W^{\infty} $ as $ m \ra \infty $. This construction implies that the sequence of $ B_m $ converges to a point with the address $ \bf w $ in the critical Cantor set. Hence, the accumulation point, $ w $ is contained in $ \OO_F $. 
\end{proof}
\ssk

\begin{lem} \label{transversal intersection at a single point}
Let $ F $ be the three dimensional H\'enon-like map in $ \II(\bar \eps) $ for small enough $ \bar \eps > 0 $. Then $ W^{s}(w) \cap \overline\Per_{F} = \{w\} $ for each $ w \in \overline\Per_{F} $.
\end{lem}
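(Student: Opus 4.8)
The plan is to prove the non-trivial inclusion $W^{s}(w)\cap\overline{\Per}_{F}\subseteq\{w\}$; the reverse inclusion is immediate, since $\dist(F^{k}w,F^{k}w)=0$ shows $w\in W^{s}(w)$. Whatever the precise notion of stable manifold in force (strong stable, pseudo‑stable, or the naive stable set), membership $v\in W^{s}(w)$ entails $\dist(F^{k}v,F^{k}w)\to 0$ as $k\to\infty$, so it is enough to show that for $v,w\in\overline{\Per}_{F}$ with $v\neq w$ one has $\inf_{k\ge 0}\dist(F^{k}v,F^{k}w)>0$. By Lemma \ref{limit points of periodic points}, $\overline{\Per}_{F}=\Per_{F}\cup\OO_{F}$, so I would split into three cases according to whether each of $v,w$ lies in $\Per_{F}$ or in $\OO_{F}$, and in each case produce a positive lower bound for $\dist(F^{k}v,F^{k}w)$ that is uniform in $k$.

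I would first dispose of the cases with at least one periodic point. If $v,w\in\Per_{F}$, then $\{\dist(F^{k}v,F^{k}w):k\ge 0\}$ is a finite set of reals, so it cannot tend to $0$ unless $F^{k}v=F^{k}w$ for some $k$; since $F$ is a diffeomorphism onto its image, hence injective, this would force $v=w$, a contradiction. If $v\in\Per_{F}$ and $w\in\OO_{F}$ (or vice versa), I would note that no periodic point lies in $\OO_{F}$, because $F$ acts on $\OO_{F}$ as a dyadic adding machine, which has no periodic orbits; hence $\Orb_{F}(v)$ is a finite set disjoint from the compact set $\OO_{F}$, and since $F^{k}w\in\OO_{F}$ we get $\dist(F^{k}v,F^{k}w)\ge\dist(\Orb_{F}(v),\OO_{F})>0$ for all $k$.

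The main case is $v,w\in\OO_{F}$ with $v\neq w$, and here I would use the box combinatorics. For each level $n$ the Cantor set decomposes as a disjoint union of finitely many compact pieces $\OO_{{\bf u}}=\OO_{F}\cap B^{n}_{{\bf u}}$, ${\bf u}\in W^{n}$, which $F$ permutes cyclically, the piece containing a point being determined by the residue of its address modulo $2^{n}$. Since $v\neq w$, there is a smallest $n$ for which $v$ and $w$ lie in different level‑$n$ pieces; because adding $k$ is a bijection of $\Z/2^{n}\Z$, the points $F^{k}v$ and $F^{k}w$ lie in different level‑$n$ pieces for every $k\ge 0$. Then $\dist(F^{k}v,F^{k}w)\ge\delta_{n}:=\min\{\dist(\OO_{{\bf u}},\OO_{{\bf u}'}):{\bf u}\neq{\bf u}'\in W^{n}\}>0$, the minimum being over finitely many pairs of disjoint compacta. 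This exhausts the cases and proves the lemma.

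I do not expect a genuine obstacle here: the argument is essentially a point‑set separation argument, and its only non‑formal ingredients are structural facts already recalled in Section \ref{preliminaries} — that $F$ acts on $\OO_{F}$ as the dyadic adding machine and that the level‑$n$ pieces of the Cantor set are pairwise disjoint — together with the elementary remark that periodic points avoid $\OO_{F}$. The only points deserving care are keeping the case analysis exhaustive (hence the appeal to Lemma \ref{limit points of periodic points}) and phrasing the conclusion so that it is insensitive to which precise version of the stable manifold $W^{s}(w)$ denotes, all of which impose $\dist(F^{k}v,F^{k}w)\to 0$ on their members.
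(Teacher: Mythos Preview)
Your proposal is correct and follows essentially the same three-case decomposition as the paper's proof, splitting according to whether each of the two points is periodic or lies in $\OO_F$. Your execution is in fact slightly cleaner in the last two cases: where the paper argues via forward invariance of the boxes along the subsequence $F^{2^{N}m}$, you invoke the adding-machine action on $\OO_F$ directly to bound $\dist(F^{k}v,F^{k}w)$ from below for \emph{all} $k$.
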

\begin{proof}
The fact that $ F \in \II(\bar \eps) $ implies the existence of the critical Cantor set. Note that any given periodic points of $ F $ has period, $ 2^k $ for some $ k \in \N $. For any two periodic points, $ p $ and $ q $, we may assume that these points are fixed points under $ F^{2^k} $ for large enough $ k \in \N $. If both $ p $ and $ q $ are in any same stable manifold, then $ \dist (F^n(p), F^n(q)) \ra 0 $ as $ n \ra \infty $. However, $ \dist (F^{2^{km}}(p), F^{2^{km}}(q)) $ is fixed for every $ m \in \N $. Thus $ p $ is the same as $ q $. 
\ssk \\
Any point $ w $ in the critical Cantor set has its address of which length is infinity and the sequence of boxes containing $ w $ with the address which is the first finite concatenations of the address of $ w $. Thus 
each point in the critical Cantor set is the limit of box domain, that is, $ \displaystyle{ \{w \} = \bigcap_{N \geq 0}B^N_{ {\bf w}_N} } $ where $ {\bf w}_N $ is the subaddress of $ w $ for all $ N \in \N $. Since $ B^N_{{\bf w}_N} $ are forward invariant under $ F^{2^{N+1}} $, for any given periodic point, say $ q $ both the box domain $ B^N_{ {\bf w}_N} $ and $ q $ are invariant under $ F^{2^{N+1}} $ for all big enough $ N $. Moreover, due to the fact that $ \diam (B^N_{{\bf w}_N}) = C\si^n $ for some $ C>0 $, we may assume that $ B^N_{ {\bf w}_N} $ is disjoint from $ \{ q\} $. Then $ \dist (F^{2^{N}m}(q), F^{2^{N}m}(w)) \geq c_0 $ for all $ m \geq 2 $ and for some $ c_0 > 0 $. 
Then $ W^{s}(w) $ for each $ w \in \OO_F $ does not contain any other point in $ \Per_F $. Similarly, $ W^{s}(\beta) $ for each $ \beta \in \Per_F $ does not contain any other point in $ \Per_F $. 
\ssk \\
There exist two disjoint neighborhoods $ B^n_{{\bf w}_n} $ and $ B^n_{{\bf w}'_n} $ of $ w \in \OO_F $ and $ w' \in \OO_F $ respectively for all sufficiently large $ n $. Both $ B^n_{{\bf w}_n} $ and $ B^n_{{\bf w}'_n} $ are forward invariant under $ F^{2^{n+1}} $. We may assume that $ \overline{B^n_{{\bf w}_n}} $ and $ \overline{B^n_{{\bf w}'_n}} $ are disjoint and the minimal distance, $ \dist_{\min}(B^n_{{\bf w}_n}, B^n_{{\bf w}'_n}) \geq \eps_0 > 0 $ for all large enough $ n $. Suppose that both $ w $ and $ w' $ are contained in the same stable manifold, $ W^{s}(w) $ or $ W^{s}(w') $. However, $ \dist_W(w, w') \geq \eps_0 $ for all $ n \in \N $. It contradicts the uniform contraction along strong stable manifold. 
Hence, $ W^{s}(w) \cap \Per_F = \{w\} $ for each $ w \in \Per_F $.
\end{proof}


\end{appendices}

\bsk


\bibliographystyle{alpha}


\end{document}